\title{\textbf{
Some orbits of a two-vertex stabilizer\\in a Grassmann graph}}
\author[]{Ian Seong}
\date{}
\def\Stab{{\rm{Stab}}}
\def\Fix{{\rm{Fix}}}
\newtheoremstyle{dotless}{}{}{\itshape}{}{\bfseries}{}{ }{}
\theoremstyle{dotless}
\newtheorem{theorem}{Theorem}[section]
\newtheorem{lemma}[theorem]{Lemma}
\newtheoremstyle{dotlessdef}{}{}{}{}{\bfseries}{}{ }{}
\theoremstyle{dotlessdef}
\newtheorem{definition}[theorem]{Definition}
\newtheorem{remark}[theorem]{Remark}
\begin{document}

\maketitle
\begin{abstract}
   Let $\mathbb{F}_q$ denote a finite field with $q$ elements. Let $n,k$ denote integers with $n>2k\geq 6$. Let $V$ denote a vector space over $\mathbb{F}_{q}$ that has dimension $n$. The vertex set of the Grassmann graph $J_q(n,k)$ consists of the $k$-dimensional subspaces of $V$. Two vertices of $J_q(n,k)$ are adjacent whenever their intersection has dimension $k-1$. Let $\partial$ denote the path-length distance function of $J_q(n,k)$. Pick vertices $x,y$ of $J_q(n,k)$ such that $1<\partial(x,y)<k$. Let $\Stab(x,y)$ denote the subgroup of $GL(V)$ that stabilizes both $x$ and $y$. In this paper, we investigate the orbits of $\Stab(x,y)$ acting on the local graph $\Gamma(x)$. We show that there are five orbits. By construction, these five orbits give an equitable partition of $\Gamma(x)$; we find the corresponding structure constants. In order to describe the five orbits more deeply, we bring in a Euclidean representation of $J_q(n,k)$ associated with the second largest eigenvalue of $J_q(n,k)$. By construction, for each orbit its characteristic vector is represented by a vector in the associated Euclidean space. We compute many inner products and linear dependencies involving the five representing vectors. 
   \\ \\
    \textbf{Keywords.} Distance-regular graph; Grassmann graph; two-vertex stabilizer; Euclidean representation.\\
    \textbf{2020 Mathematics Subject Classification.} Primary: 05E30. Secondary: 05E18.
\end{abstract}

\section{Introduction}
This paper is about a family of finite undirected graphs known as
the Grassmann graphs \cite[p.~268]{BCN}. These graphs are distance-regular in the
sense of \cite[p.~1]{BCN}. For any distance-regular graph, there is a construction called a Euclidean representation. In order to
motivate our main topic, we now recall this construction. 
Let $\Gamma$ denote a distance-regular graph with vertex set $X$ and path-length distance function $\partial$. According to \cite[Definition~6.1]{Seong}, a Euclidean representation of $\Gamma$ is a nonzero Euclidean space $E$ together with a map $\rho:X\rightarrow E$ such that
\begin{enumerate}[label=(\roman*)]
    \item $E$ is spanned by $\{\rho(x)\mid x\in X\}$;
    
    \item for all $x,y\in X$, the inner product $\bigl<\rho(x),\rho(y)\bigr>$ depends only on $\partial(x,y)$;
    
    \item there exists $\vartheta\in \mathbb{R}$ such that for all $x\in X$,
    \begin{equation*}
        \sum_{\substack{z\in X\\\partial(z,x)=1}}\rho(z)=\vartheta \rho(x).
    \end{equation*} 
\end{enumerate}
By \cite[Section~6]{Seong}, the scalar $\vartheta$ is an eigenvalue of $\Gamma$. For each eigenvalue $\theta$ of $\Gamma$, the corresponding eigenspace gives a Euclidean representation of $\Gamma$.

We briefly recall the definition of a Grassmann graph. Let $\mathbb{F}_{q}$ denote a finite field with $q$ elements. Fix an integer $n\geq 1$. Let $V$ denote a vector space over $\mathbb{F}_{q}$ that has dimension $n$. Let the set $P_q(n)$ consist of the subspaces of $V$. For $0\leq k\leq n$ let the set $P_k$ consist of the elements of $P_q(n)$ that have dimension $k$. For $1\leq k\leq n-1$ the vertex set of the Grassmann graph $J_q(n,k)$ is $P_k$. Two vertices of $J_q(n,k)$ are adjacent whenever their intersection has dimension $k-1$.

For the rest of this section, we assume that $\Gamma$ is the Grassmann graph $J_q(n,k)$ with $n>2k\geq 6$.

In what follows, we will use the notation 
\begin{equation*}
    [m]=\frac{q^{m}-1}{q-1} \qquad \qquad(m\in \mathbb{Z}).
\end{equation*} 
By \cite[Theorem~9.3.3]{BCN}, the eigenvalues of $\Gamma$ are:
\begin{equation*}
    \theta_i=q^{i+1}[k-i][n-k-i]-[i] \qquad \qquad (0\leq i\leq k).
\end{equation*}

In \cite[Section~4]{Seong}, we used $P_q(n)$ to construct a Euclidean representation of $\Gamma$ associated with $\theta_1$. We now recall this construction. Let $E$ denote a Euclidean space with dimension $[n]-1$ and bilinear form $\left<\; ,\;\right>$. Define a function 
\begin{equation}
    \begin{aligned}
    \label{introbasichat}
        P_1&\rightarrow E\\s&\mapsto\widehat{s}
    \end{aligned}
\end{equation}
that satisfies the following conditions (C1) $-$ (C4):

\begin{enumerate}[label=(C\arabic*)]
    \item $E=\text{Span}\bigl\{\widehat{s}\mid s\in P_1\bigr\}$;
    
    \item for $s\in P_1$, $\bigl\Vert\widehat{s}\bigr\Vert^2=[n]-1$;
    
    \item for distinct $s,t\in P_1$, $\Bigl<\widehat{s},\widehat{t}\Bigr>=-1$;
    
    \item $\mathlarger{\sum_{s\in P_1}\widehat{s}=0}$.
\end{enumerate}

Next, extend the function (\ref{introbasichat}) to a function
\begin{equation}
    \begin{aligned}
    \label{introhardhat}
        P_q(n)&\rightarrow E \\
        u&\mapsto \widehat{u}
    \end{aligned}
\end{equation}
such that for all $u\in P_q(n)$, 
\begin{equation*}
    \widehat{u}=\sum_{\substack{s\in P_1\\ s\subseteq u}}\widehat{s}.
\end{equation*}
By \cite[Section~6]{Seong}, the Euclidean space $E$, together with the restriction of the map (\ref{introhardhat}) to $X=P_k$, gives a Euclidean representation of $\Gamma$ that is associated with $\theta_1$. By \cite[Lemma~4.2]{Seong}, the $GL(V)$-action on $P_q(n)$ induces a $GL(V)$-module structure on $E$.

We now summarize the results of \cite{Seong}. For $x\in X$, let $\Gamma(x)$ denote the local graph of $x$. For the rest of the section, fix $x,y\in X$ such that $1<\partial(x,y)<k$. 
Let $\Stab(x,y)$ denote the subgroup of $GL(V)$ that stabilizes both $x$ and $y$. Let $\Fix(x,y)$ denote the subspace of $E$ consisting of the vectors that are fixed by every element of $\Stab(x,y)$. In \cite[Lemma~8.3]{Seong}, we showed that the following vectors form a basis for $\Fix(x,y)$:
\begin{equation}
\label{introbasis1}
    \widehat{x},\qquad \qquad \widehat{y}, \qquad \qquad \widehat{x\cap y}, \qquad \qquad \widehat{x+y}.
\end{equation}

We now describe a second basis for $\Fix(x,y)$. In \cite[Definition~9.1]{Seong}, we defined the sets 
\begin{equation*}
    \mathcal{B}_{xy}=\{z\in \Gamma(x)\mid \partial(z,y)=\partial(x,y)+1\}, \qquad \qquad \mathcal{C}_{xy}=\{z\in \Gamma(x)\mid \partial(z,y)=\partial(x,y)-1\}.
\end{equation*}
By \cite[Lemma~9.2,~9.4]{Seong}, the sets $\mathcal{B}_{xy},\mathcal{C}_{xy}$ are orbits of the $\Stab(x,y)$-action on $\Gamma(x)$. In \cite[Definition~9.5]{Seong}, we defined the vectors
\begin{equation*}
    B_{xy}=\sum_{z\in \mathcal{B}_{xy}}\widehat{z}, \qquad \qquad C_{xy}=\sum_{z\in \mathcal{C}_{xy}}\widehat{z}.
\end{equation*}

In \cite[Theorem~11.1]{Seong}, we showed that the following vectors form a basis for $\Fix(x,y)$:
\begin{equation}
\label{introbasis2}
    \widehat{x},\qquad \qquad \widehat{y}, \qquad \qquad B_{xy}, \qquad \qquad C_{xy}.
\end{equation}

In \cite[Theorem~11.3]{Seong}, we found the transition matrices between the basis (\ref{introbasis1}) and the basis (\ref{introbasis2}). We found the inner products between:
\begin{enumerate}[label=(\roman*)]
    \item any pair of vectors in the basis (\ref{introbasis1}) \cite[Theorem~10.4]{Seong};
    \item any pair of vectors in the basis (\ref{introbasis2}) \cite[Theorem~10.15]{Seong};
    \item any vector in the basis (\ref{introbasis1}) and any vector in the basis (\ref{introbasis2}) \cite[Theorem~10.9]{Seong}.
\end{enumerate}

In this paper, we investigate the orbits of $\Stab(x,y)$ acting on $\Gamma(x)$. As we will see, there are five orbits. We already mentioned two of the orbits, namely $\mathcal{B}_{xy}$ and $\mathcal{C}_{xy}$. We now describe the other three orbits. 

Define the set
\begin{equation*}
    \mathcal{A}_{xy}=\{z\in \Gamma(x)\mid \partial(y,z)=\partial(x,y)\}.
\end{equation*}

We partition the set $\mathcal{A}_{xy}$ into the following three sets: 
\begin{align*}
    \mathcal{A}_{xy}^{+}&=\{z\in \mathcal{A}_{xy}\mid z+x+y\supsetneq x+y,\;z\cap x\cap y=x\cap y\},\\
    \mathcal{A}_{xy}^{0}&=\{z\in \mathcal{A}_{xy}\mid z+x+y=x+y,\;z\cap x\cap y=x\cap y\},\\
    \mathcal{A}_{xy}^{-}&=\{z\in \mathcal{A}_{xy}\mid z+x+y=x+y,\;z\cap x\cap y\subsetneq x\cap y\}.
\end{align*}

We show that the sets 
\begin{equation*}
    \mathcal{A}_{xy}^{+}, \qquad \qquad \mathcal{A}_{xy}^{0}, \qquad \qquad \mathcal{A}_{xy}^{-}
\end{equation*}
are orbits of the $\Stab(x,y)$-action on $\Gamma(x)$. Hence,
\begin{equation}
\label{intro5orbit}
    \mathcal{B}_{xy},\qquad \qquad 
    \mathcal{C}_{xy},\qquad \qquad 
    \mathcal{A}_{xy}^{+}, \qquad \qquad \mathcal{A}_{xy}^{0}, \qquad \qquad \mathcal{A}_{xy}^{-}
\end{equation}
are the five orbits of the $\Stab(x,y)$-action on $\Gamma(x)$. By construction, (\ref{intro5orbit}) is a partition of $\Gamma(x)$ that is equitable in the sense of \cite[p.~159]{Schwenk}. We call this partition the $y$-partition of $\Gamma(x)$. 

Define the vectors
\begin{equation}
\label{introadef}
    A_{xy}^{+}=\sum_{z\in \mathcal{A}_{xy}^{+}}\widehat{z},\qquad \qquad A_{xy}^{0}=\sum_{z\in \mathcal{A}_{xy}^{0}}\widehat{z},\qquad \qquad A_{xy}^{-}=\sum_{z\in \mathcal{A}_{xy}^{-}}\widehat{z}.
\end{equation}

We show that $A_{xy}^{+},A_{xy}^{0},A_{xy}^{-}$ are contained in $\Fix(x,y)$. We write each vector in (\ref{introadef}) as a linear combination of the vectors in (\ref{introbasis1}) and also the vectors in (\ref{introbasis2}). We find the inner products between:
\begin{enumerate}[label=(\roman*)]
    \item any vector in (\ref{introadef}) and any vector in the basis (\ref{introbasis1});
    
    \item any vector in (\ref{introadef}) and any vector in the basis (\ref{introbasis2});

    \item any pair of vectors in (\ref{introadef}).
\end{enumerate}

We mentioned that the $y$-partition of $\Gamma(x)$ is equitable. We compute the corresponding structure constants. In the table below, for each orbit $\mathcal{O}$ in the header column, and each orbit $\mathcal{N}$ in the header row, the $(\mathcal{O},\mathcal{N})$-entry gives the number of vertices in $\mathcal{N}$ that are adjacent to a given vertex in $\mathcal{O}$. Write $i=\partial(x,y)$.

\begin{center}
\begin{tabular}{c|c c c c c}
     & $\mathcal{B}_{xy}$ & $\mathcal{C}_{xy}$ & $\mathcal{A}_{xy}^{+}$ & $\mathcal{A}_{xy}^{0}$ & $\mathcal{A}_{xy}^{-}$\\
    \hline \\
    $\mathcal{B}_{xy}$ & $\substack{q^{i+1}[k-i]\\+q^{i+1}[n-k-i]-q-1}$ & $0$ & $q[i]$ & $0$ & $q[i]$\\
   \\
    $\mathcal{C}_{xy}$ & $0$ & $2q[i-1]$ & $q^{i+1}[n-k-i]$ & $(q-1)\bigl(2[i]-1\bigr)$ & $q^{i+1}[k-i]$\\
   \\
    $\mathcal{A}_{xy}^{+}$ & $q^{i+1}[k-i]$ & $[i]$ & $q[n-k]-q-1$ & $(q-1)[i]$ & $0$\\
    \\
    $\mathcal{A}_{xy}^{0}$ & $0$ & $2[i]-1$ & $q^{i+1}[n-k-i]$ & $(q-1)\bigl(2[i]-1\bigr)-1$ &$q^{i+1}[k-i]$\\
    \\
    $\mathcal{A}_{xy}^{-}$ & $q^{i+1}[n-k-i]$ & $[i]$ & $0$ & $(q-1)[i]$ & $q[k]-q-1$\\
    \\
\end{tabular}
\end{center}

Let $\mathcal{M}_i$ denote the $5\times 5$ matrix from the table above. We display the eigenvalues for $\mathcal{M}_i$. For each eigenvalue, we give a corresponding row eigenvector and column eigenvector. We show that the eigenvalues of $\mathcal{M}_i$ are the same as the eigenvalues of the local graph $\Gamma(x)$.

This paper is organized as follows. In Sections 2 and 3, we present some preliminaries on the Grassmann graph $J_q(n,k)$ and the projective geometry $P_q(n)$. In Section 4, we represent the elements of $P_q(n)$ as vectors in the Euclidean space $E$. In Section 5, we present some results about $\Stab(x,y)$ and $\Fix(x,y)$. In Section 6, we find all the orbits of the $\Stab(x,y)$-action on $\Gamma(x)$. In Sections 7 and 8, we define the vectors $A_{xy}^{+}, A_{xy}^{0}, A_{xy}^{-}$ and write these vectors in terms of the basis (\ref{introbasis1}) and the basis (\ref{introbasis2}). We also obtain inner products between the vectors $\widehat{x},\widehat{y},B_{xy},C_{xy}A_{xy}^{+}, A_{xy}^{0}, A_{xy}^{-}$. In Section 9, we use the matrix $\mathcal{M}_{i}$ to describe the adjacency between the $\Stab(x,y)$-orbits. We find the eigenvalues of $\mathcal{M}_i$ and their corresponding row eigenvectors and column eigenvectors. We also show that the eigenvalues of $\mathcal{M}_i$ are the same as the eigenvalues of $\Gamma(x)$. 

\section{The Grassmann graph $\Gamma$}
Let $\Gamma=(X,\mathcal{E})$ denote a finite undirected graph that is connected, without loops or multiple edges, with vertex set $X$, edge set $\mathcal{E}$, and path-length distance function $\partial$. Two vertices $x,y\in X$ are said to be adjacent whenever they form an edge. The diameter $d$ of $\Gamma$ is defined as $d=\max\{\partial(x,y)\mid x,y\in X\}$. For $x\in X$ and an integer $i\geq 0$, define the set $\Gamma_i(x)=\{y\in X\mid \partial(x,y)=i\}$. We abbreviate $\Gamma(x)=\Gamma_1(x)$. The subgraph induced on $\Gamma(x)$ is called the \emph{local graph} of $x$. 

We say that $\Gamma$ is \textit{regular with valency $\kappa$} whenever $\bigl\vert\Gamma(x)\bigr\vert=\kappa$ for all $x\in X$. 
We say that $\Gamma$ is \textit{distance-regular} whenever for all integers $h,i,j$ such that $0\leq h,i,j\leq d$ and all $x,y\in X$ such that $\partial(x,y)=h$, the cardinality of the set $\{z\in X\mid\partial(x,z)=i, \partial(y,z)=j\}$ depends only on $h,i,j$. This cardinality is denoted by $p_{i,j}^{h}$. For the rest of this section, we assume that $\Gamma$ is
distance-regular with diameter $d\geq 3$. Observe that $\Gamma$ is regular with valency $\kappa=p^{0}_{1,1}$.

Define 
\begin{equation*}
        b_i=p^{i}_{1,i+1}\; \;  (0\leq i<d),\qquad \qquad a_i=p^{i}_{1,i} \; \; (0\leq i\leq d),\qquad \qquad c_i=p^{i}_{1,i-1}\; \; (0<i\leq d).
\end{equation*}
Note that $b_0=\kappa$, $a_0=0$, $c_1=1$. Also note that  
\begin{equation*}
    b_i+a_i+c_i=\kappa \qquad \qquad (0\leq i\leq d),
\end{equation*}
where $c_0=0$ and $b_d=0$.

We call $b_i$, $a_i$, $c_i$ the {\it intersection numbers of $\Gamma$}.

By the {\it eigenvalues of $\Gamma$} we mean the roots of the minimal polynomial of the adjacency matrix. Since $\Gamma$ is distance-regular, by \cite[p.~128]{BCN}, $\Gamma$ has $d+1$ eigenvalues; we denote these eigenvalues by
\begin{equation}
\nonumber
    \theta_0>\theta_1>\cdots>\theta_d.
\end{equation} 

By \cite[p.~129]{BCN}, $\theta_0=\kappa$.

By the {\it spectrum of $\Gamma$} we mean the set of ordered pairs $\bigl\{\left(\theta_{i},m_i\right)\bigr\}_{i=0}^{d}$, where $\left\{\theta_i\right\}_{i=0}^d$ are the
eigenvalues of $\Gamma$ and $m_i$ the dimension of the $\theta_i$-eigenspace ($0\leq i\leq d$).

This paper is about a class of distance-regular graphs called the Grassmann graphs. These graphs are defined as follows. Let $\mathbb{F}=\mathbb{F}_q$ denote a finite field with $q$ elements, and let $n,k$ denote positive integers such that $n> k$. Let $V$ denote an $n$-dimensional vector space over $\mathbb{F}$. The Grassmann graph $J_q(n,k)$ has vertex set $X$ consisting of the $k$-dimensional subspaces of $V$. Vertices $x,y$ of $J_q(n,k)$ are adjacent whenever $x \cap y$ has dimension $k-1$.

According to \cite[p.~268]{BCN}, the graphs $J_q(n,k)$ and $J_q(n,n-k)$ are isomorphic. Without loss of generality, we may assume $n\geq 2k$. Under this assumption, the diameter of $J_q(n,k)$ is equal to $k$. (See \cite[Theorem~9.3.3]{BCN}.) The case $n=2k$ is somewhat special, so throughout this paper we assume that $n>2k$. 

For the rest of this paper, we assume that $\Gamma$ is the Grassmann graph $J_q(n,k)$ with $k\geq 3$.

In what follows, we will use the notation 
\begin{equation*}
    [m]=\frac{q^{m}-1}{q-1} \qquad \qquad(m\in \mathbb{Z}).
\end{equation*} 

By \cite[Theorem~9.3.2]{BCN}, the valency of $\Gamma$ is 
\begin{equation*}
  \kappa=q[k][n-k].  
\end{equation*}

By \cite[Theorem~9.3.3]{BCN}, the intersection numbers of $\Gamma$ are
\begin{equation}
\label{sizebc}
    b_i=q^{2i+1}[k-i][n-k-i],\qquad \qquad c_i=[i]^2 \qquad \qquad (0\leq i\leq k).
\end{equation}

By \cite[Theorem~9.3.3]{BCN}, the eigenvalues of $\Gamma$ are
 \begin{equation}
 \label{eigenvalues}
     \theta_i=q^{i+1}[k-i][n-k-i]-[i] \qquad \qquad (0\leq i\leq k).
 \end{equation}

The given ordering of the eigenvalues is known to be $Q$-polynomial in the sense of \cite[Theorem~8.1.1]{BCN}.

\section{The projective geometry $P_q(n)$}
To study the graph $\Gamma$, it is helpful to view its vertex set $X$ as a subset of a certain poset $P_q(n)$, which is defined as follows. 
\begin{definition}
    Let the poset $P_q(n)$ consist of the subspaces of $V$, together with the partial order given by inclusion. This poset $P_q(n)$ is called the \emph{projective geometry}.
\end{definition}
For the rest of the paper, we abbreviate $P=P_q(n)$. In this section we present some lemmas about the poset $P$. 

\begin{lemma}{\rm{\cite[p.~47]{Axler}}}
    \label{modularity}
    For $u,v\in P$ we have 
    \begin{equation*}
        \dim u+\dim v=\dim \left(u\cap v\right)+\dim\left(u+v\right).
    \end{equation*}
\end{lemma}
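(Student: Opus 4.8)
The statement is the standard dimension (modularity) formula for subspaces, so the plan is to give the usual basis-extension argument, which is short enough to record even though \cite[p.~47]{Axler} is cited. First I would set $w=u\cap v$ and write $r=\dim w$, and choose a basis $e_1,\dots,e_r$ of $w$. Since $w\subseteq u$, extend it to a basis $e_1,\dots,e_r,f_1,\dots,f_s$ of $u$, so that $\dim u=r+s$. Since $w\subseteq v$, separately extend $e_1,\dots,e_r$ to a basis $e_1,\dots,e_r,g_1,\dots,g_t$ of $v$, so that $\dim v=r+t$. The goal then reduces to showing $\dim(u+v)=r+s+t$.

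Next I would claim that the combined list $e_1,\dots,e_r,f_1,\dots,f_s,g_1,\dots,g_t$ is a basis of $u+v$. Spanning is immediate: every element of $u+v$ is a sum of an element of $u$ and an element of $v$, and each of those is a linear combination of the listed vectors. For linear independence, suppose a linear combination of the listed vectors vanishes. Isolating the $g_k$-terms on one side of the equation exhibits a vector of $\mathrm{Span}\{g_1,\dots,g_t\}\subseteq v$ that also lies in $u$, hence in $u\cap v=w=\mathrm{Span}\{e_1,\dots,e_r\}$. Because $e_1,\dots,e_r,g_1,\dots,g_t$ is a basis of $v$, this forces every $g_k$-coefficient to be zero; what remains is a vanishing linear combination of $e_1,\dots,e_r,f_1,\dots,f_s$, which is a basis of $u$, so the remaining coefficients vanish as well.

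Finally, counting the basis just produced gives $\dim(u+v)=r+s+t$, and therefore $\dim u+\dim v=(r+s)+(r+t)=(r+s+t)+r=\dim(u+v)+\dim(u\cap v)$, which is the asserted identity. The only step with any content is the linear-independence argument, where one must use both that $w=u\cap v$ (to conclude the isolated vector lies in $w$) and that each extension was chosen to be a basis; the rest is bookkeeping. Alternatively, one can simply invoke \cite[p.~47]{Axler} and omit the argument entirely.
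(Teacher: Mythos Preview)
Your argument is correct; it is the standard basis-extension proof of the dimension formula, and in fact the intermediate claim you prove---that the combined list $e_1,\dots,e_r,f_1,\dots,f_s,g_1,\dots,g_t$ is a basis of $u+v$---is exactly the content of the paper's Lemma~\ref{linin}. The paper itself gives no proof of Lemma~\ref{modularity} at all, simply citing \cite[p.~47]{Axler}, so your final remark (``invoke the reference and omit the argument'') is precisely what the paper does; your written-out proof is extra detail, but entirely sound.
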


\begin{lemma}{{\rm{\cite[Lemma~3.3]{Seong}}}}
    \label{linin}
    Let $u,v\in P$. Let the subset $\mathcal{R}\subseteq V$ form a basis for $u\cap v$. Extend the basis $\mathcal{R}$ to a basis $\mathcal{R}\cup \mathcal{S}$ for $u$, and extend the basis $\mathcal{R}$ to a basis $\mathcal{R}\cup \mathcal{T}$ for $v$. Then $\mathcal{R}\cup \mathcal{S}\cup \mathcal{T}$ forms a basis for the subspace $u+v$.
\end{lemma}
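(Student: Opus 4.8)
The plan is to verify the two defining properties of a basis: spanning and linear independence. For spanning, I would note that $u$ is spanned by $\mathcal{R}\cup\mathcal{S}$ and $v$ is spanned by $\mathcal{R}\cup\mathcal{T}$, so every element of $u+v$ is a sum of a linear combination from $\mathcal{R}\cup\mathcal{S}$ and one from $\mathcal{R}\cup\mathcal{T}$; hence $\mathcal{R}\cup\mathcal{S}\cup\mathcal{T}$ spans $u+v$. This step is immediate and carries no difficulty.

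The substance is linear independence of $\mathcal{R}\cup\mathcal{S}\cup\mathcal{T}$. I would start from a vanishing linear combination, grouped as $r+s+t=0$ where $r$ is a combination of vectors in $\mathcal{R}$, $s$ a combination of vectors in $\mathcal{S}$, and $t$ a combination of vectors in $\mathcal{T}$. Rearranging gives $t=-(r+s)$, and the right-hand side lies in $u$ since $\mathcal{R}\cup\mathcal{S}$ spans $u$; meanwhile $t\in v$ since $\mathcal{T}\subseteq v$. Therefore $t\in u\cap v$, so $t$ is a linear combination of the vectors in $\mathcal{R}$. Subtracting this expression for $t$ from the known combination of $\mathcal{T}$ gives a vanishing linear combination of $\mathcal{R}\cup\mathcal{T}$, which is linearly independent because it is a basis of $v$; hence the coefficients on $\mathcal{T}$ (and on $\mathcal{R}$ in that relation) are all zero, so $t=0$. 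The original relation collapses to $r+s=0$, a vanishing linear combination of the basis $\mathcal{R}\cup\mathcal{S}$ of $u$, forcing all remaining coefficients to vanish. This proves linear independence, and together with spanning it shows $\mathcal{R}\cup\mathcal{S}\cup\mathcal{T}$ is a basis for $u+v$.

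As an alternative (or sanity check), I could instead invoke Lemma~\ref{modularity}: the three sets $\mathcal{R},\mathcal{S},\mathcal{T}$ are pairwise disjoint (any vector common to $\mathcal{S}$ and $\mathcal{T}$ would lie in $u\cap v=\operatorname{Span}\mathcal{R}$, contradicting that $\mathcal{R}\cup\mathcal{S}$ is a basis of $u$; similarly for the other pairs), so $|\mathcal{R}\cup\mathcal{S}\cup\mathcal{T}|=\dim(u\cap v)+(\dim u-\dim(u\cap v))+(\dim v-\dim(u\cap v))=\dim u+\dim v-\dim(u\cap v)=\dim(u+v)$ by modularity. Since a spanning set of size equal to the dimension is automatically a basis, this finishes the argument. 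I do not anticipate a genuine obstacle here; the only point requiring care is keeping the bookkeeping of which combination lies in which subspace straight, which the grouping $r+s+t=0$ handles cleanly.
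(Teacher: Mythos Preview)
Your argument is correct; both the direct linear-independence verification and the dimension-count via Lemma~\ref{modularity} are valid and standard. Note that the present paper does not actually prove this lemma---it is quoted without proof from \cite[Lemma~3.3]{Seong}---so there is no in-paper argument to compare against; your write-up would serve perfectly well as a self-contained proof here.
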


For $0\leq \ell\leq n$, let the set $P_{\ell}$ consist of the $\ell$-dimensional subspaces of $V$. 
Note that $X=P_k$. Also note that $P_0=\{0\}$ and $P_n=\{V\}$.

\begin{lemma}
\label{introlem0}
For $x,y\in X$ the following (i), (ii) hold:
\begin{enumerate}[label={\rm{(\roman*)}}]
\item {\rm{\cite[p.~269]{BCN}}} the dimension of $x\cap y$ is $k-\partial(x,y)$;
\item {\rm{\cite[Lemma~3.5]{Seong}}} the dimension of $x+y$ is $k+\partial(x,y)$.
\end{enumerate}
\end{lemma}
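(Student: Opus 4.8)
Both parts are cited from the literature, but the arguments are short enough to sketch self-contained. Write $i=\partial(x,y)$ and $j=k-\dim(x\cap y)$. The plan is to prove part (i) by showing $i=j$, after which part (ii) is immediate from the modular law: by Lemma \ref{modularity}, $\dim(x+y)=\dim x+\dim y-\dim(x\cap y)=2k-(k-j)=k+j=k+i$.

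To see $i\le j$, I would build an explicit walk of length $j$ from $x$ to $y$. Pick a basis $\mathcal{R}$ for $x\cap y$, extend it to a basis $\mathcal{R}\cup\{s_1,\dots,s_j\}$ of $x$, and extend it to a basis $\mathcal{R}\cup\{t_1,\dots,t_j\}$ of $y$. By Lemma \ref{linin} the set $\mathcal{R}\cup\{s_1,\dots,s_j\}\cup\{t_1,\dots,t_j\}$ is a basis for $x+y$, hence linearly independent. For $0\le m\le j$ set $z_m=\text{Span}\bigl(\mathcal{R}\cup\{t_1,\dots,t_m\}\cup\{s_{m+1},\dots,s_j\}\bigr)$. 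Then each $z_m$ is $k$-dimensional, $z_0=x$, $z_j=y$, and for $1\le m\le j$ the subspaces $z_{m-1}$ and $z_m$ share exactly the $(k-1)$-dimensional span of $\mathcal{R}\cup\{t_1,\dots,t_{m-1}\}\cup\{s_{m+1},\dots,s_j\}$, so they are adjacent in $\Gamma$. Hence $z_0,z_1,\dots,z_j$ is a walk from $x$ to $y$ and $i\le j$.

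To see $i\ge j$, I would first record an auxiliary observation: if $z,z'\in X$ are adjacent, $w=z\cap z'$, and $u\in X$, then $\bigl|\dim(u\cap z)-\dim(u\cap z')\bigr|\le 1$. Indeed $w$ has codimension $1$ in each of $z,z'$, so (using Lemma \ref{modularity} again) $\dim(u\cap z)$ and $\dim(u\cap z')$ each equal $\dim(u\cap w)$ or $\dim(u\cap w)+1$, and the claim follows. Now take a shortest path $x=z_0,z_1,\dots,z_i=y$ and apply the observation with $u=x$ to each consecutive pair: the integer sequence $\dim(x\cap z_0)=k,\ \dim(x\cap z_1),\ \dots,\ \dim(x\cap z_i)=\dim(x\cap y)=k-j$ decreases by at most $1$ per step, forcing $i\ge j$. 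Combining the two inequalities gives $\partial(x,y)=j=k-\dim(x\cap y)$, which is part (i).

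The one step needing care is the walk construction for the upper bound: one must know the union of the three bases is linearly independent, so that every $z_m$ genuinely has dimension $k$ and consecutive ones meet in dimension exactly $k-1$. This is precisely what Lemma \ref{linin} supplies; everything else is dimension bookkeeping together with the modular law.
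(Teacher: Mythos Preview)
Your argument is correct. The paper does not supply a proof of this lemma at all; it simply cites \cite[p.~269]{BCN} for (i) and \cite[Lemma~3.5]{Seong} for (ii). What you have written is the standard self-contained proof: the walk obtained by swapping basis vectors one at a time gives $\partial(x,y)\le k-\dim(x\cap y)$, the Lipschitz-type observation $\bigl|\dim(u\cap z)-\dim(u\cap z')\bigr|\le 1$ for adjacent $z,z'$ gives the reverse inequality, and then (ii) follows from Lemma~\ref{modularity}. So there is no discrepancy to report beyond the fact that you prove what the paper merely quotes.
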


\begin{definition}
\label{defs}
For $u\in P$ define the set 
\begin{equation*}
  \Omega(u)=\{s\in P_1\mid s\subseteq u\}.  
\end{equation*}
\end{definition}

Note that $\Omega(V)=P_1$.

By {\rm{\cite[Section~3]{Seong}}}, the following {\rm{(i)--(ii)}} hold:
    \begin{enumerate}[label={\rm{(\roman*)}}]
        \item for all $u\in P$,
            \begin{equation*}
            \bigl\vert\Omega(u)\bigr\vert=[m],
            \end{equation*}
            where $u\in P_{m}$;
        \item $\vert P_1\vert = [n]$.
    \end{enumerate}

We now comment on the symmetries of $P$. Recall that the general linear group $GL(V)$ consists of the invertible $\mathbb{F}$-linear maps from $V$ to $V$. The action of $GL(V)$ on $V$ induces a permutation action of $GL(V)$ on the set $P$. This permutation action respects the partial order on $P$. The orbits of the action are $P_\ell$ for $0\leq\ell\leq n$. By \cite[Lemma~3.9]{Seong}, the action of $GL(V)$ on $X$ preserves the path-length distance $\partial$.

\section{Representing $P$ using a Euclidean space $E$}

In \cite[Section~4]{Seong} we described how to represent the elements of $P$
as vectors in a Euclidean space. Our goal in this section is to
summarize the description. The material in this section will
be used to state and prove our main results later in the paper.

There are two stages to representing the elements of $P$ as vectors in a Euclidean space. In the first stage we consider the elements of $P_1$.

Let $E$ denote a Euclidean space with dimension $[n]-1$ and bilinear form $\left<\; ,\;\right>$. Recall the notation $\Vert \nu\Vert^2=\left<\nu,\nu\right>$ for any $\nu\in E$. We define a function 
\begin{equation}
    \begin{aligned}
    \label{basichat}
        P_1&\rightarrow E\\s&\mapsto\widehat{s}
    \end{aligned}
\end{equation}
that satisfies the following conditions (C1) $-$ (C4):

\begin{enumerate}[label=(C\arabic*)]
    \item $E=\text{Span}\bigl\{\widehat{s}\mid s\in P_1\bigr\}$;
    
    \item for $s\in P_1$, $\bigl\Vert\widehat{s}\bigr\Vert^2=[n]-1$;
    
    \item for distinct $s,t\in P_1$, $\Bigl<\widehat{s},\widehat{t}\Bigr>=-1$;
    
    \item $\mathlarger{\sum_{s\in P_1}\widehat{s}=0}$.
\end{enumerate}

Next, we extend the function (\ref{basichat}) to a function
\begin{equation}
    \begin{aligned}
    \label{hardhat}
        P&\rightarrow E \\
        u&\mapsto \widehat{u}
    \end{aligned}
\end{equation}
such that for all $u\in P$, 
\begin{equation*}
    \widehat{u}=\sum_{s\in \Omega(u)}\widehat{s}.
\end{equation*}

Note that $\widehat{u}=0$ if $u\in P_0$ or $u\in P_n$. 

Next we present a lemma that involves the map (\ref{hardhat}).

\begin{lemma}
    \label{introlem1}
    The following {\rm{(i)--(vi)}} hold:
    \begin{enumerate}[label={\rm{(\roman*)}}]
        \item {\rm{\cite[Lemma~6.2]{Seong}}} for $u,v\in P$, 
        \begin{equation*}
            \bigl<\widehat{u},\widehat{v}\bigr>=[n][h]-[i][j],
        \end{equation*}
        where
        \begin{equation*}
            i=\dim u,\qquad \qquad j=\dim v, \qquad \qquad h=\dim \left(u\cap v\right);
        \end{equation*}
        
        \item {\rm{\cite[Lemma~6.3]{Seong}}} for $u\in P$,
        \begin{equation*}
                \bigl\Vert\widehat{u}\bigr\Vert^2=q^{i}[i][n-i],
        \end{equation*}
        where $i=\dim u$;

        \item {\rm{\cite[Lemma~6.4]{Seong}}}
        for $x,y\in X$,
        \begin{equation*}
            \bigl<\widehat{x},\widehat{y}\bigr>=[n][k-i]-[k]^2,
        \end{equation*}
        where $i=\partial(x,y)$;

        \item {\rm{\cite[Lemma~6.5]{Seong}}}
        for $x\in X$,
        \begin{equation*}
            \bigl\Vert\widehat{x}\bigr\Vert^2=q^k[k][n-k];
        \end{equation*}

        \item {\rm{\cite[Lemma~6.6]{Seong}}} 
        for $x\in X$,
        \begin{equation*}
            \sum_{z\in \Gamma(x)}\widehat{z}=\theta_1\widehat{x},
        \end{equation*}
        where $\theta_1$ is from {\rm(\ref{eigenvalues})};

        \item {\rm{\cite[Lemma~6.7]{Seong}}} the vector space $E$ is spanned by $\bigl\{\widehat{x}\mid x\in X\bigr\}$.
    \end{enumerate}
\end{lemma}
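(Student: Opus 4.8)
The plan is to organize this statement around its internal dependencies: part~(i) is the master inner-product formula, (ii) is its diagonal case, (iii) and (iv) are its specializations to $X$, and (v), (vi) are two global identities that each require a separate counting argument together with the normalization (C4). All six parts reduce ultimately to the axioms (C1)--(C4) and the combinatorics of $P$.

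First I would prove (i). Using $\widehat u=\sum_{s\in\Omega(u)}\widehat s$, $\widehat v=\sum_{t\in\Omega(v)}\widehat t$ and bilinearity, $\bigl<\widehat u,\widehat v\bigr>=\sum_{s\in\Omega(u)}\sum_{t\in\Omega(v)}\bigl<\widehat s,\widehat t\bigr>$. I would split this double sum according to whether $s=t$. A diagonal term $(s,s)$ occurs exactly when $s\in\Omega(u)\cap\Omega(v)=\Omega(u\cap v)$, so there are $\bigl|\Omega(u\cap v)\bigr|=[h]$ of them, each equal to $\bigl\Vert\widehat s\bigr\Vert^2=[n]-1$ by (C2), while the remaining $[i][j]-[h]$ terms each equal $-1$ by (C3). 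Adding, $\bigl<\widehat u,\widehat v\bigr>=[h]\bigl([n]-1\bigr)-\bigl([i][j]-[h]\bigr)=[n][h]-[i][j]$, which is (i). Putting $u=v$, so $h=i=j$, gives $\bigl\Vert\widehat u\bigr\Vert^2=[i]\bigl([n]-[i]\bigr)$, and since $[n]-[i]=q^i[n-i]$ this is (ii). Parts (iii) and (iv) then follow immediately: for $x,y\in X$ we have $\dim x=\dim y=k$ and, by Lemma~\ref{introlem0}(i), $\dim(x\cap y)=k-\partial(x,y)$, so (iii) is (i) with $i=j=k$, $h=k-\partial(x,y)$, and (iv) is (ii) with $i=k$.

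For (v), I would write $\sum_{z\in\Gamma(x)}\widehat z=\sum_{s\in P_1}N(s)\,\widehat s$ with $N(s)=\bigl|\{z\in\Gamma(x)\mid s\subseteq z\}\bigr|$, and compute $N(s)$ by conditioning on the $(k-1)$-dimensional subspace $w=x\cap z$. If $s\subseteq x$, then $s\subseteq w$, so $w$ ranges over the $(k-1)$-dimensional subspaces of $x$ containing $s$ (there are $[k-1]$ of these), and for each such $w$ the admissible $z$ are the $k$-dimensional subspaces strictly containing $w$ and different from $x$, of which there are $[n-k+1]-1=q[n-k]$; since each such $z$ automatically satisfies $x\cap z=w$, we get $N(s)=q[k-1][n-k]$. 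If $s\not\subseteq x$, then $s\cap x=0$ and $z=w+s$ is forced with $w=x\cap z$ a $(k-1)$-dimensional subspace of $x$; conversely each such $w$ (there are $[k]$) gives an admissible $z=w+s$, and $w\mapsto w+s$ is injective, so $N(s)=[k]$. Hence $\sum_{z\in\Gamma(x)}\widehat z=q[k-1][n-k]\,\widehat x+[k]\sum_{s\not\subseteq x}\widehat s$; using (C4) to replace $\sum_{s\not\subseteq x}\widehat s$ by $-\widehat x$ and recalling $\widehat x=\sum_{s\subseteq x}\widehat s$, this collapses to $\bigl(q[k-1][n-k]-[k]\bigr)\widehat x$. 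A short calculation with $[k]=1+q[k-1]$ and $[n-k]=1+q[n-k-1]$ shows $q[k-1][n-k]-[k]=q^2[k-1][n-k-1]-1=\theta_1$, giving (v).

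Finally, for (vi): by (C1) it is enough to put each $\widehat s$, $s\in P_1$, in $\mathrm{Span}\bigl\{\widehat x\mid x\in X\bigr\}$. I would sum $\widehat x$ over all $x\in X$ with $s\subseteq x$ and expand $\widehat x=\sum_{t\subseteq x}\widehat t$: the coefficient of $\widehat s$ is the number $a$ of $k$-dimensional subspaces containing $s$, and the coefficient of each $\widehat t$ with $t\neq s$ is the number $b$ of $k$-dimensional subspaces containing the $2$-dimensional space $s+t$, which does not depend on $t$. Using (C4) once more, the sum collapses to $(a-b)\,\widehat s$, and a $q$-Pascal identity gives $a-b=q^{k-1}\binom{n-2}{k-1}_q\neq 0$, so dividing by $a-b$ expresses $\widehat s$ through the $\widehat x$. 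I expect the counting in part (v) to be the main obstacle: in each branch one must be careful that $\dim(x\cap z)$ is exactly $k-1$ and not $k$, and the concluding identification of the accumulated scalar with $\theta_1$ (rather than some other $\theta_j$) rests on the particular $q$-identity above, not on anything structural; by contrast (i)--(iv) are essentially bookkeeping and (vi) is a single clean count.
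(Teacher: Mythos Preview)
Your arguments for all six parts are correct. In~(v) the case split on $s\subseteq x$ versus $s\not\subseteq x$ is handled cleanly: the claim that every $k$-space $z\supsetneq w$ with $z\neq x$ automatically has $x\cap z=w$ holds because $\dim(x\cap z)\in\{k-1,k\}$ and $z\neq x$ rules out $k$; likewise in the second branch $z=w+s$ forces $x\cap z=w$ since $s\not\subseteq x$. The algebraic reduction $q[k-1][n-k]-[k]=q^2[k-1][n-k-1]-1=\theta_1$ is right. In~(vi) your count $a-b=q^{k-1}\binom{n-2}{k-1}_q$ is exactly the $q$-Pascal identity, and this is nonzero under the standing hypotheses $n>2k\geq 6$.

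The comparison with the paper is short: the paper does not prove this lemma at all. Each part carries an inline citation to the earlier paper~\cite{Seong}, and Lemma~\ref{introlem1} functions here purely as a summary of imported facts. So your proposal is not so much a different route as a self-contained reconstruction of results the present paper takes as given. What you gain is independence from the reference; what the paper gains by citing is brevity, since these formulas are used as black boxes in the later inner-product computations and their proofs are not needed again.
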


By \cite[Section~6]{Seong}, the Euclidean space $E$, together with the restriction of the map (\ref{hardhat}) to $X$ gives a Euclidean representation of $\Gamma$ in the sense of \cite[Definition~6.1]{Seong}. This representation is associated with the eigenvalue $\theta_1$. 

By \cite[Lemma~4.2]{Seong}, the Euclidean space $E$ becomes a $GL(V)$-module such that for all $u\in P$ and $\sigma\in GL(V)$, 
\begin{equation*}
    \sigma\bigl(\widehat{u}\bigr)=\widehat{\sigma(u)}.
\end{equation*}
By \cite[Section~6]{Seong}, the Euclidean space $E$ is irreducible as a $GL(V)$-module.

\section{The stabilizer of some elements in $X$}
\label{stabilizer}
In this section, we consider some stabilizer subgroups of $GL(V)$. These subgroups are the stabilizer of a vertex in $X$, and the stabilizer of two distinct vertices in $X$. We obtain some results about these stabilizers that will be used later in the paper.

For $x\in X$, let $\Stab(x)$ denote the subgroup of $GL(V)$ consisting of the elements that fix $x$. We call $\Stab(x)$ the {\it stabilizer of $x$ in $GL(V)$}. 

\begin{lemma}{{\rm{\cite[Lemma~5.1]{Seong}}}}
\label{staborbit}
    For $v,v'\in P$ and $x\in X$, the following are equivalent:
    \begin{enumerate}[label=\rm{{\rm{(\roman*)}}}]
        \item $\dim v=\dim v'$ and $\dim \left(v\cap x\right)=\dim \bigl(v'\cap x\bigr)$;

        \item the subspaces $v$ and $v'$ are contained in the same orbit of the $\Stab(x)$-action on $P$.
    \end{enumerate}
\end{lemma}

Pick distinct $x,y\in X$. Let $\Stab(x,y)$ denote the subgroup of $GL(V)$ consisting of the elements that fix both $x$ and $y$. We call $\Stab(x,y)$ the {\it stabilizer of $x$ and $y$ in $GL(V)$}. 

Let $\Fix(x,y)$ denote the subspace of $E$ consisting of the vectors that are fixed by every element of $\Stab(x,y)$.

\begin{lemma}{{\rm{\cite[Theorem~8.3]{Seong}}}}
    \label{1basis}
    Pick distinct $x,y\in X$. In the table below, we display vectors that form a basis for $\Fix(x,y)$. 
{\rm{
    \begin{center}
        \begin{tabular}{c|c}
            Case & basis for $\Fix(x,y)$  \\
            \hline
             & \\
            $1\leq \partial(x,y)<k$\; & \;$\widehat{x},\qquad \widehat{y}, \qquad \widehat{x\cap y}, \qquad \widehat{x+y}$\\
            & \\
            $\partial(x,y)=k$\; & \;$\widehat{x},\qquad \widehat{y}, \qquad \widehat{x+y}$\\
        \end{tabular}
    \end{center}
}}
\end{lemma}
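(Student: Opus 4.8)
# Proof Proposal for Lemma \ref{1basis}

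The plan is to prove this by a dimension count combined with an explicit verification that the displayed vectors are fixed and linearly independent. First I would establish that each displayed vector lies in $\Fix(x,y)$. Since the $GL(V)$-action on $E$ satisfies $\sigma(\widehat{u})=\widehat{\sigma(u)}$ for all $u\in P$, any $\sigma\in\Stab(x,y)$ fixes $x$, $y$, and hence also fixes $x\cap y$ and $x+y$ as subspaces of $V$; therefore it fixes $\widehat{x}$, $\widehat{y}$, $\widehat{x\cap y}$, $\widehat{x+y}$. So the span of the displayed vectors is contained in $\Fix(x,y)$, in both cases.

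Next I would compute $\dim\Fix(x,y)$ from the representation-theoretic side. Because $E$ is an irreducible $GL(V)$-module and (as recalled) spanned by $\{\widehat{x}\mid x\in X\}$, the dimension of $\Fix(x,y)$ equals the number of orbits of $\Stab(x,y)$ on a suitable spanning set, or more precisely one can use the standard fact that $\dim\Fix(x,y)$ equals the multiplicity of the trivial representation in the restriction of $E$ to $\Stab(x,y)$, which by Frobenius reciprocity (or a direct counting argument with the permutation module on $X$) can be read off from the number of $\Stab(x,y)$-orbits on $X$ together with the structure of the permutation module $\mathbb{C}X$. Concretely, I would first show that the projections of the $\widehat{z}$ (for $z$ ranging over orbit representatives of $\Stab(x,y)$ on $X$) onto $E$ already span $\Fix(x,y)$, so $\dim\Fix(x,y)$ is at most the number of such orbits; then by Lemma \ref{staborbit} applied twice (once for $x$, once for $y$) the orbits of $\Stab(x,y)$ on $P$ are classified by the triple $(\dim z, \dim(z\cap x), \dim(z\cap y))$, and restricting to $X=P_k$ one enumerates the possible triples. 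Pinning down $\dim\Fix(x,y)$ to be exactly $4$ when $\partial(x,y)<k$ and exactly $3$ when $\partial(x,y)=k$ is where the real work lies.

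Then I would check linear independence of the displayed vectors using the inner product formulas in Lemma \ref{introlem1}(i). For the case $\partial(x,y)<k$, writing $i=\partial(x,y)$ so that $\dim(x\cap y)=k-i$ and $\dim(x+y)=k+i$ (Lemma \ref{introlem0}), I would assemble the $4\times 4$ Gram matrix of $\widehat{x},\widehat{y},\widehat{x\cap y},\widehat{x+y}$ from the formula $\langle\widehat{u},\widehat{v}\rangle=[n][h]-[\dim u][\dim v]$, where $h$ is the dimension of the relevant intersection (noting $x\cap(x\cap y)=x\cap y$, $(x+y)\cap x=x$, $(x+y)\cap(x\cap y)=x\cap y$, etc.), and show its determinant is nonzero — this is a polynomial identity in $q$ that is nonzero because $n>2k$ and $1\le i<k$. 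For the case $\partial(x,y)=k$ one does the analogous $3\times 3$ computation (here $x\cap y=0$ so $\widehat{x\cap y}=0$, which is precisely why that vector drops out). Combining: the displayed vectors are linearly independent elements of $\Fix(x,y)$, and their number matches $\dim\Fix(x,y)$, so they form a basis.

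The main obstacle will be the dimension count $\dim\Fix(x,y)=4$ (resp. $3$): showing the number is not larger requires knowing that the projections of $\{\widehat{z}\}$ onto $\Fix(x,y)$, as $z$ runs over $\Stab(x,y)$-orbit representatives on $X$, do not produce more than $4$ independent vectors — equivalently, that vectors $\widehat{z}$ from distinct orbits on $X$ can nonetheless project to the same element of $\Fix(x,y)$ in just the right pattern. I expect this is handled in \cite{Seong} by an averaging argument: the orthogonal projection $E\to\Fix(x,y)$ sends $\widehat{z}$ to $\frac{1}{|\Stab(x,y)|}\sum_{\sigma}\widehat{\sigma(z)}$, i.e. to a scalar multiple of the orbit-sum $\sum_{z'\in\mathcal{O}}\widehat{z'}$ over the orbit $\mathcal{O}$ of $z$; one then shows each such orbit-sum is a linear combination of $\widehat{x},\widehat{y},\widehat{x\cap y},\widehat{x+y}$, which is a finite check over the (few) orbit types, using inclusion–exclusion identities like $\sum_{z'\in\mathcal{O}}\widehat{z'} = \sum_{z'\in\mathcal{O}}\sum_{s\subseteq z'}\widehat{s}$ and counting how many $z'\in\mathcal{O}$ contain a given line $s$ according to the position of $s$ relative to the flag $x\cap y\subseteq x,y\subseteq x+y$.
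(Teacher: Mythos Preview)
The paper does not prove this lemma; it merely cites \cite[Theorem~8.3]{Seong}. So there is no in-paper proof to compare against, and your proposal should be judged on its own merits.

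Your overall architecture (fixed vectors $\Rightarrow$ linear independence via Gram matrix $\Rightarrow$ dimension count) is sound, and the Gram-matrix step is exactly what the paper later records as Lemma~\ref{innprodmat21} with inverse in Lemma~\ref{inverse}, so linear independence is fine. The real issue is in your dimension count. Your claim that ``by Lemma~\ref{staborbit} applied twice the orbits of $\Stab(x,y)$ on $P$ are classified by the triple $(\dim z,\dim(z\cap x),\dim(z\cap y))$'' is false: Lemma~\ref{staborbit} tells you this triple is a $\Stab(x,y)$-invariant, but not a complete one. Indeed, this very paper furnishes a counterexample: every $z\in\mathcal{A}_{xy}$ has the same triple $(k,\,k-1,\,k-i)$, yet Theorem~\ref{5orbitstheorem} shows $\mathcal{A}_{xy}$ breaks into three $\Stab(x,y)$-orbits $\mathcal{A}_{xy}^{+},\mathcal{A}_{xy}^{0},\mathcal{A}_{xy}^{-}$. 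So counting $X$-orbits via this triple will not give you the bound~$4$.

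Your final paragraph, however, contains the right idea, and it can be executed more cleanly by bypassing $X$ altogether. Since $E=\mathrm{Span}\{\widehat{s}\mid s\in P_1\}$ by (C1), the space $\Fix(x,y)$ is spanned by the $\Stab(x,y)$-orbit sums over $P_1$, not $X$. For $s\in P_1$ the orbit is determined by the position of $s$ relative to the flag $x\cap y\subseteq x,y\subseteq x+y\subseteq V$; there are five positions when $\partial(x,y)<k$ (inside $x\cap y$; inside $x$ but not $x\cap y$; inside $y$ but not $x\cap y$; inside $x+y$ but in neither $x$ nor $y$; outside $x+y$) and four when $\partial(x,y)=k$ (since then $x\cap y=0$). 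One must still prove that each of these positions is a single orbit, via a basis-extension argument like the one in Lemma~\ref{a0orbit}. The corresponding orbit sums are then $\widehat{x\cap y}$, $\widehat{x}-\widehat{x\cap y}$, $\widehat{y}-\widehat{x\cap y}$, $\widehat{x+y}-\widehat{x}-\widehat{y}+\widehat{x\cap y}$, and $-\widehat{x+y}$ (using (C4) for the last), all visibly in the span of the displayed vectors. This gives $\dim\Fix(x,y)\le 4$ (resp.\ $3$) directly, with no detour through the much more complicated orbit structure on $X$.
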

\; \\
\begin{definition}
\label{geometricdef}
    Pick distinct $x,y\in X$. By the \emph{geometric basis for $\Fix(x,y)$}, we mean the basis displayed in Lemma \ref{1basis}.
\end{definition} 

Note that the case $\partial(x,y)=k$ is special. The case $\partial(x,y)=1$ is also special; see \cite[Definition~9.1,~9.5]{Seong}.

For the rest of the paper, we assume that $1<\partial(x,y)<k$. 

\section{The $y$-partition of $\Gamma(x)$}
\label{ypartition}
Pick $x,y\in X$ such that $1 <\partial(x,y)<k$. In this section we describe the orbits of $\Stab(x,y)$ acting on $\Gamma(x)$.
We will show that there are five orbits. The partition of $\Gamma(x)$ into these five orbits will be called the $y$-partition of $\Gamma(x)$.

\begin{definition}
\label{calbcdef}
For $x,y\in X$ such that $1<\partial(x,y)<k$, define 
\begin{align*}
    \mathcal{B}_{xy}&=\bigl\{z\in \Gamma(x)\mid \partial(y,z)=\partial(x,y)+1\bigr\},\\
    \mathcal{C}_{xy}&=\bigl\{z\in \Gamma(x)\mid \partial(y,z)=\partial(x,y)-1\bigr\},\\
    \mathcal{A}_{xy}&=\bigl\{z\in \Gamma(x)\mid \partial(y,z)=\partial(x,y)\bigr\}.
\end{align*}
\end{definition}
Observe that 
\begin{equation*}
    \bigl\vert\mathcal{B}_{xy}\bigr\vert=b_i, \qquad \qquad \bigl\vert\mathcal{C}_{xy}\bigr\vert=c_i, \qquad \qquad \bigl\vert\mathcal{A}_{xy}\bigr\vert=a_i \qquad \qquad \bigl(i=\partial(x,y)\bigr).
\end{equation*}

\begin{lemma}{{\rm{\cite[Lemma~9.2,~9.4]{Seong}}}}
\label{bcorbit}
     For $x,y\in X$ such that $1<\partial(x,y)<k$, the sets $\mathcal{B}_{xy}$ and $\mathcal{C}_{xy}$ are orbits of the $\Stab(x,y)$-action on $\Gamma(x)$.
\end{lemma}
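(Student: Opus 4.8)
Write $i=\partial(x,y)$. The plan is, for each of $\mathcal{B}_{xy}$ and $\mathcal{C}_{xy}$, to prove two things: that the set is invariant under $\Stab(x,y)$ (hence a union of orbits), and that $\Stab(x,y)$ acts transitively on it (hence it is a single orbit). Invariance is immediate: by \cite[Lemma~3.9]{Seong} every element of $GL(V)$ preserves $\partial$, and every $\sigma\in\Stab(x,y)$ fixes $x$ and $y$, so for $z\in\Gamma(x)$ we have $\partial(x,\sigma z)=\partial(x,z)$ and $\partial(y,\sigma z)=\partial(y,z)$; thus $\sigma$ preserves the conditions $\partial(x,z)=1$ and $\partial(y,z)=i\pm 1$ that define $\mathcal{B}_{xy}$ and $\mathcal{C}_{xy}$. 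The transitivity is where the work lies.

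For $\mathcal{B}_{xy}$ I would first show that every $z\in\mathcal{B}_{xy}$ has the same dimensions relative to the $\Stab(x,y)$-invariant subspaces $x\cap y$, $x$, $y$, $x+y$, namely $\dim(z\cap x)=k-1$, $\dim(z\cap y)=k-i-1$, $\dim(z\cap x\cap y)=k-i-1$, and $\dim(z\cap(x+y))=k-1$. The first two come from the distance conditions together with Lemma~\ref{introlem0}. For the third, Lemma~\ref{modularity} applied to $z\cap x$ and $x\cap y$ inside $x$ forces $\dim(z\cap x\cap y)\ge(k-1)+(k-i)-k=k-i-1$, while $z\cap x\cap y\subseteq z\cap y$ forces $\dim(z\cap x\cap y)\le k-i-1$; hence equality, and $z\cap y=z\cap x\cap y\subseteq x$. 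For the fourth, $z\subseteq x+y$ would give $\dim(z+y)\le\dim(x+y)=k+i$ and so $\dim(z\cap y)\ge k-i$, contradicting $\dim(z\cap y)=k-i-1$; thus $z\not\subseteq x+y$ and $\dim(z\cap(x+y))=\dim(z\cap x)=k-1$. With the profile pinned down, I would build an explicit basis of $V$ in which $x$, $y$, $z$ are each spanned by a prescribed sub-collection of basis vectors: start from a basis of $z\cap x\cap y$, extend it to bases of $x\cap y$ and of $z\cap x$ (which together span $x$, since $(x\cap y)+(z\cap x)=x$ by a dimension count), adjoin a basis of a complement of $x\cap y$ in $y$, adjoin one vector completing $z\cap x$ to $z$, and finally extend to all of $V$. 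The block sizes depend only on $i,k,n$, so a second $z'\in\mathcal{B}_{xy}$ yields a basis with identical block structure and with $x,y$ spanned by the corresponding sub-collections. The map $\sigma\in GL(V)$ sending the first basis to the second then satisfies $\sigma(x)=x$, $\sigma(y)=y$, $\sigma(z)=z'$, so $\sigma\in\Stab(x,y)$ carries $z$ to $z'$.

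For $\mathcal{C}_{xy}$ the argument has the same shape, with profile $\dim(z\cap x)=k-1$, $\dim(z\cap y)=k-i+1$, $\dim(z\cap x\cap y)=k-i$, $\dim(z\cap(x+y))=k$. Here $x\cap y\subseteq z$ is forced by applying Lemma~\ref{modularity} to $z\cap x$ and $z\cap y$ inside $z$, giving $\dim(z\cap x\cap y)\ge(k-1)+(k-i+1)-k=k-i$; and $z\subseteq x+y$ is forced because otherwise $z\cap(x+y)=z\cap x\subseteq x$ would yield $z\cap y\subseteq x\cap y$ and hence $\dim(z\cap y)\le k-i$, contradicting $\dim(z\cap y)=k-i+1$. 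One then builds adapted bases as before and transports $z$ to $z'$ by an element of $\Stab(x,y)$. The main obstacle is precisely this forcing step: a priori a vertex $z\in\Gamma(x)$ can sit relative to $x\cap y$ and $x+y$ in several ways — these are exactly the alternatives that, when $\partial(y,z)=i$, split $\mathcal{A}_{xy}$ into three orbits — and the crux is to verify that the extreme distances $\partial(y,z)=i+1$ and $\partial(y,z)=i-1$ each leave only one configuration. A secondary point needing care is the bookkeeping in the basis transport: vectors shared among the spanning sets of $x$, $y$, $z$ (for instance a basis of $x\cap y\cap z$, lying in all three) must be assigned consistently, which is automatic once $\sigma$ is prescribed on a single basis of $V$. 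This construction is the natural two-vertex refinement of Lemma~\ref{staborbit}.
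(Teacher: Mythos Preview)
Your argument is correct. The invariance step is immediate, and for transitivity your dimension-forcing claims all check out: in the $\mathcal{B}_{xy}$ case the inequalities $k-i-1\le\dim(z\cap x\cap y)\le\dim(z\cap y)=k-i-1$ pin down $z\cap y=z\cap x\cap y$, and then $z\cap(x+y)=z\cap x$ follows since $z\not\subseteq x+y$; in the $\mathcal{C}_{xy}$ case the modularity bound applied to $z\cap x$ and $z\cap y$ inside $z$ forces $x\cap y\subseteq z$, and the containment $z\subseteq x+y$ follows by the contradiction you indicate. The adapted-basis transport then goes through exactly as in the proof of Lemma~\ref{a0orbit}.

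As for the comparison: the present paper does not actually prove this lemma. It is imported verbatim from \cite[Lemmas~9.2,~9.4]{Seong}, so there is no in-paper argument to compare against. Your proof is precisely the natural one, and it is the same template the paper itself deploys in Lemma~\ref{a0orbit} for the orbit $\mathcal{A}_{xy}^{0}$: determine the dimension profile of $z$ against the flag $x\cap y\subseteq x,y\subseteq x+y$, build a basis of $V$ adapted to that flag together with $z$, and send one such basis to another by an element of $GL(V)$. What you have written would serve as a self-contained replacement for the citation.
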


\begin{definition}
\label{bcdef}
For $x,y\in X$ such that $1<\partial(x,y)<k$, define the vectors
\begin{equation*}
    B_{xy}=\sum_{z\in \mathcal{B}_{xy}}\widehat{z},\qquad \qquad C_{xy}=\sum_{z\in \mathcal{C}_{xy}}\widehat{z},\qquad \qquad  A_{xy}=\sum_{z\in \mathcal{A}_{xy}}\widehat{z}.
\end{equation*}
Note that $B_{xy}, C_{xy}, A_{xy}$ are contained in $E$. We call $B_{xy}, C_{xy}, A_{xy}$ the \emph{characteristic vectors} of $\mathcal{B}_{xy},\mathcal{C}_{xy},\mathcal{A}_{xy}$ respectively.
\end{definition}

\begin{lemma}{{\rm{\cite[Theorem~11.1]{Seong}}}}
\label{combibasis}
    For $x,y\in X$ such that $1<\partial(x,y)<k$, the following vectors form a basis for $\Fix(x,y)$:
    \begin{equation}
\label{basis2}
    \widehat{x},\qquad \qquad  \widehat{y}, \qquad \qquad B_{xy},\qquad \qquad  C_{xy}.
\end{equation}
\end{lemma}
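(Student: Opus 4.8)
The plan is to first check that all four vectors lie in $\Fix(x,y)$, and then, since $\dim\Fix(x,y)=4$ by Lemma \ref{1basis}, to verify linear independence. Membership is easy: for $\sigma\in\Stab(x,y)$ we have $\sigma(\widehat x)=\widehat{\sigma(x)}=\widehat x$ and likewise $\sigma(\widehat y)=\widehat y$, while $\mathcal B_{xy}$ and $\mathcal C_{xy}$ are $\Stab(x,y)$-orbits by Lemma \ref{bcorbit}, so $\sigma$ merely permutes the summands of $B_{xy}=\sum_{z\in\mathcal B_{xy}}\widehat z$ and $C_{xy}=\sum_{z\in\mathcal C_{xy}}\widehat z$; hence all four vectors are fixed.

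For linear independence, note that since $\widehat x,\widehat y$ appear in the geometric basis of Lemma \ref{1basis}, the set $\widehat x,\widehat y,B_{xy},C_{xy}$ is a basis of $\Fix(x,y)$ if and only if the $2\times 2$ matrix formed by the $\widehat{x\cap y}$- and $\widehat{x+y}$-coordinates of $B_{xy}$ and of $C_{xy}$ (with respect to the geometric basis) is nonsingular. To obtain these coordinates I would expand $B_{xy}$ and $C_{xy}$ in the geometric basis by the inner-product method: writing $B_{xy}=a\widehat x+b\widehat y+c\,\widehat{x\cap y}+d\,\widehat{x+y}$ and pairing with each geometric basis vector produces a linear system whose coefficient matrix is the Gram matrix of the geometric basis (computable from Lemma \ref{introlem1}(i) together with Lemmas \ref{modularity} and \ref{introlem0}, and nonsingular because the form on $E$ is positive definite and the geometric basis is a basis), and whose right-hand side has entries $\langle B_{xy},\widehat w\rangle=\sum_{z\in\mathcal B_{xy}}\langle\widehat z,\widehat w\rangle$ for $w\in\{x,y,x\cap y,x+y\}$. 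By Lemma \ref{introlem1}(i) each term $\langle\widehat z,\widehat w\rangle$ depends only on $\dim(z\cap w)$. A short dimension count (using Lemmas \ref{modularity} and \ref{introlem0}) shows that for $z\in\mathcal B_{xy}$ the dimensions $\dim(z\cap x)=k-1$, $\dim(z\cap y)=k-i-1$, $\dim(z\cap x\cap y)=k-i-1$ are constant (with $i=\partial(x,y)$) while $\dim(z\cap(x+y))\in\{k-1,k\}$; and for $z\in\mathcal C_{xy}$ one gets $x\cap y\subseteq z\subseteq x+y$, so all four relevant intersection dimensions are constant. Thus $C_{xy}$ is at once a known combination of the geometric basis, and $B_{xy}$ becomes one after counting $N=\bigl|\{z\in\mathcal B_{xy}\mid z\subseteq x+y\}\bigr|$ in the projective geometry; alternatively $B_{xy}$ can be extracted from the identity $A_{xy}+B_{xy}+C_{xy}=\theta_1\widehat x$ of Lemma \ref{introlem1}(v) after a similar analysis of $A_{xy}$. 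Substituting into the $2\times 2$ determinant and simplifying with the identities for $[m]$ should yield a value that is nonzero for all $q$ and all $i$ with $1<i<k$, which finishes the proof.

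The step I expect to be the main obstacle is the combinatorial bookkeeping: proving the constancy of the intersection dimensions over $\mathcal B_{xy}$ and $\mathcal C_{xy}$, carrying out the count $N$, and—above all—checking that the resulting $2\times 2$ determinant does not vanish identically after the Gaussian-binomial algebra. A parallel route that relocates the same difficulty is to expand directly over $P_1$, writing $B_{xy}=\sum_{s\in P_1}\bigl|\{z\in\mathcal B_{xy}\mid s\subseteq z\}\bigr|\,\widehat s$, grouping $P_1$ into its five $\Stab(x,y)$-orbits (classified by whether $s$ lies in $x\cap y$; in $x$ but not $y$; in $y$ but not $x$; in $x+y$ but in neither; or outside $x+y$), and using that each orbit-sum of the $\widehat s$ equals, by inclusion–exclusion on the sets $\Omega(\cdot)$, an explicit combination of $\widehat x,\widehat y,\widehat{x\cap y},\widehat{x+y}$; this makes the closed forms transparent at the cost of several nested subspace counts.
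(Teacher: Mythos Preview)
The paper does not prove this lemma; it is imported verbatim from \cite[Theorem~11.1]{Seong} with no argument given here. Your plan is sound and is in spirit the same as what \cite{Seong} does: that paper computes the inner products of $B_{xy}$ and $C_{xy}$ against the geometric basis (these numbers are reproduced here as Lemma~\ref{innprodmat23}), then uses $M_i^{-1}$ to write $B_{xy},C_{xy}$ in the geometric basis and checks that the resulting transition matrix is invertible---exactly your ``inner-product method.'' The inverse transition is recorded in the present paper as Lemma~\ref{maintheorem}, so the nonvanishing of your $2\times2$ determinant is already implicit in the statements collected here.

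One simplification you are missing. You allow $\dim\bigl(z\cap(x+y)\bigr)$ to take two values on $\mathcal B_{xy}$ and propose to count $N=\bigl|\{z\in\mathcal B_{xy}:z\subseteq x+y\}\bigr|$. But $\mathcal B_{xy}$ is a single $\Stab(x,y)$-orbit (Lemma~\ref{bcorbit}) and $x+y$ is $\Stab(x,y)$-stable, so this dimension is automatically constant on $\mathcal B_{xy}$. In fact $\dim(z+y)=k+\partial(z,y)=k+i+1>\dim(x+y)$ forces $z\not\subseteq x+y$, whence $\dim\bigl(z\cap(x+y)\bigr)=k-1$ for every $z\in\mathcal B_{xy}$ and $N=0$. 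With this observation $B_{xy}$, like $C_{xy}$, is immediately a constant multiple of each inner product times the orbit size, and the determinant check becomes short; the detour through $A_{xy}$ or through the $P_1$-orbit decomposition is unnecessary.
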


\begin{definition}
    Let $x,y\in X$ satisfy $1<\partial(x,y)<k$. By the \emph{combinatorial basis for $\Fix(x,y)$}, we mean the basis formed by the vectors in (\ref{basis2}).
\end{definition}

Next we focus on the set $\mathcal{A}_{xy}$. This set turns out to be the disjoint union of three orbits of the $\Stab(x,y)$-action on $\Gamma(x)$. Our next general goal is to describe these three orbits.

\begin{definition}
\label{caladef}
For $x,y\in X$ such that $1<\partial(x,y)<k$, define 
\begin{align*}
    \mathcal{A}_{xy}^{+}&=\bigl\{z\in \mathcal{A}_{xy}\mid z+x+y\supsetneq x+y, z\cap x\cap y=x\cap y\bigr\},\\
    \mathcal{A}_{xy}^{0}&=\bigl\{z\in \mathcal{A}_{xy}\mid z+x+y=x+y, z\cap x\cap y=x\cap y\bigr\},\\
    \mathcal{A}_{xy}^{-}&=\bigl\{z\in \mathcal{A}_{xy}\mid z+x+y=x+y, z\cap x\cap y\subsetneq x\cap y\bigr\}.
\end{align*}
\end{definition}

We are going to show that the three sets in Definition \ref{caladef} are orbits of $\Stab(x,y)$. First we have a few remarks.

\begin{lemma}
\label{acomplete}
    For $x,y\in X$ such that $1<\partial(x,y)<k$, the set $\mathcal{A}_{xy}$ is the disjoint union of the sets $\mathcal{A}_{xy}^{+},\mathcal{A}_{xy}^{0},\mathcal{A}_{xy}^{-}$.
\end{lemma}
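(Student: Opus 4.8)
The plan is to verify two things: first, that the three sets $\mathcal{A}_{xy}^{+},\mathcal{A}_{xy}^{0},\mathcal{A}_{xy}^{-}$ are pairwise disjoint, and second, that their union is all of $\mathcal{A}_{xy}$. Disjointness will be essentially immediate from the definitions: the defining conditions of the three sets are mutually exclusive. Indeed, $\mathcal{A}_{xy}^{+}$ requires $z+x+y\supsetneq x+y$, whereas both $\mathcal{A}_{xy}^{0}$ and $\mathcal{A}_{xy}^{-}$ require $z+x+y=x+y$; and among the latter two, $\mathcal{A}_{xy}^{0}$ requires $z\cap x\cap y=x\cap y$ while $\mathcal{A}_{xy}^{-}$ requires $z\cap x\cap y\subsetneq x\cap y$. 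So no $z$ can lie in two of the three sets.

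For the covering claim, I would take an arbitrary $z\in\mathcal{A}_{xy}$ and show it satisfies the defining condition of exactly one of the three sets. The key observation is that since $z\in\Gamma(x)$ we have $\dim(z\cap x)=k-1$ and $\dim(z+x)=k+1$, and since $\partial(y,z)=\partial(x,y)=i$ we have by Lemma \ref{introlem0} that $\dim(z\cap y)=k-i$ and $\dim(z+y)=k+i$. The two quantities to control are $z+x+y$ relative to $x+y$, and $z\cap x\cap y$ relative to $x\cap y$. Since $z\subseteq z+x+y$ and $x+y\subseteq z+x+y$, and since $\dim(x+y)=k+i$ while $\dim z=k$, modularity (Lemma \ref{modularity}) applied to $z$ and $x+y$ gives $\dim(z+x+y)=\dim z+\dim(x+y)-\dim(z\cap(x+y))=2k+i-\dim(z\cap(x+y))$; since $z\cap x\subseteq z\cap(x+y)$ has dimension $k-1$, we get $\dim(z\cap(x+y))\in\{k-1,k\}$, hence $\dim(z+x+y)\in\{k+i,k+i+1\}$, i.e. $z+x+y$ equals $x+y$ or properly contains it with codimension one. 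Dually, $z\cap x\cap y\subseteq x\cap y$, and since $z\cap x\cap y=(z\cap x)\cap y$ with $\dim(z\cap x)=k-1$ and $\dim y = k$, modularity applied inside $y$ (or to $z\cap x$ and $x\cap y$ inside $x$) shows $\dim(z\cap x\cap y)\in\{k-i-1,k-i\}$, so $z\cap x\cap y$ equals $x\cap y$ or is a hyperplane in it. Thus each of the two conditions is binary, giving a priori four cases, and I must rule out the case $z+x+y\supsetneq x+y$ together with $z\cap x\cap y\subsetneq x\cap y$.

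The main obstacle is precisely this last exclusion. To handle it I would count dimensions carefully. Suppose $z+x+y\supsetneq x+y$ and $z\cap x\cap y\subsetneq x\cap y$. Write $w=z\cap(x+y)$; from the first assumption and the computation above, $\dim w=k-1$, so in fact $w=z\cap x$ (both have dimension $k-1$ and $w\supseteq z\cap x$). In particular $z\cap y\subseteq w=z\cap x$, so $z\cap y\subseteq x$, giving $z\cap y=z\cap x\cap y$, whence $\dim(z\cap x\cap y)=\dim(z\cap y)=k-i$. But this contradicts the second assumption $z\cap x\cap y\subsetneq x\cap y$, since $\dim(x\cap y)=k-i$ as well. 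Therefore the fourth case is impossible, and every $z\in\mathcal{A}_{xy}$ lies in $\mathcal{A}_{xy}^{+}$, $\mathcal{A}_{xy}^{0}$, or $\mathcal{A}_{xy}^{-}$. Combined with disjointness, this gives the claimed partition. Throughout, the only tools needed are Lemma \ref{modularity}, Lemma \ref{introlem0}, and the elementary fact that a subspace containment between subspaces of equal dimension is an equality.
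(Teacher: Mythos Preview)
Your proof is correct and follows exactly the same approach as the paper: the paper's proof simply asserts ``by linear algebra'' that the set $\{z\in\mathcal{A}_{xy}\mid z+x+y\supsetneq x+y,\ z\cap x\cap y\subsetneq x\cap y\}$ is empty and says the result follows, whereas you have carefully written out the dimension-count argument that justifies this assertion. Your key step---observing that $z+x+y\supsetneq x+y$ forces $z\cap(x+y)=z\cap x$, hence $z\cap y\subseteq z\cap x$, hence $\dim(z\cap x\cap y)=\dim(z\cap y)=k-i$---is precisely the linear algebra the paper leaves implicit.
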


\begin{proof}
    By linear algebra, the set $\bigl\{z\in \mathcal{A}_{xy}\mid z+x+y\supsetneq x+y, \;z\cap x\cap y\subsetneq x\cap y\bigr\}$ is empty. The result follows.
\end{proof}

\begin{lemma}
\label{acount}
    For $x,y\in X$ such that $1<\partial(x,y)<k$,
    \begin{equation}
    \label{acount2}
    \bigl\vert \mathcal{A}_{xy}^{+}\bigr\vert=q^{i+1}[i][n-k-i],\qquad \qquad 
    \bigl\vert \mathcal{A}_{xy}^{0}\bigr\vert =(q-1)[i]^2,\qquad \qquad
    \bigl\vert \mathcal{A}_{xy}^{-}\bigr\vert=q^{i+1}[i][k-i],
\end{equation}
where $i=\partial(x,y)$.
\end{lemma}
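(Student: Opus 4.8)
The plan is to count the three sets directly. Throughout, abbreviate $i=\partial(x,y)$, $U=x\cap y$, and $W=x+y$; by Lemma \ref{introlem0} we have $\dim U=k-i$ and $\dim W=k+i$. The starting point is that for $z\in\Gamma(x)$ the subspace $h:=z\cap x$ is a hyperplane of $x$, so by Lemma \ref{modularity} the dimension $\dim(h\cap U)$ equals $k-i$ when $U\subseteq h$ (equivalently $U\subseteq z$, since $h\subseteq z$) and equals $k-i-1$ otherwise; moreover, when $U\subseteq z$ we have $U\subseteq x\cap z$, so the condition $z\in\mathcal A_{xy}$, i.e.\ $\dim(z\cap y)=k-i$, becomes $z\cap y=U$ in that case. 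Hence the count splits naturally according to whether $U\subseteq z$. In every count below a vertex $z$ is recovered from the data used to build it (the hyperplane $h$ together with the line $z/h$), so there is no overcounting.

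For $\mathcal A_{xy}^{+}\cup\mathcal A_{xy}^{0}=\{z\in\Gamma(x)\mid U\subseteq z,\ \partial(y,z)=i\}$, I would pass to $\bar V=V/U$, writing $\bar x,\bar y,\bar z,\bar h$ for the images of $x,y,z,h$. Here $\dim\bar V=n-k+i$, the spaces $\bar x,\bar y,\bar z$ are $i$-dimensional, $\bar x\cap\bar y=0$, $\bar h\subseteq\bar x$ has codimension $1$, $\dim(\bar z\cap\bar x)=i-1$, and $z\in\mathcal A_{xy}$ translates to $\bar z\cap\bar y=0$. Fixing a hyperplane $h$ of $x$ with $U\subseteq h$---there are $[i]$ of these, as $x/U$ is $i$-dimensional---the admissible $\bar z$ biject with the lines of $\bar V/\bar h$ that are neither equal to the line $\bar x/\bar h$ nor contained in the $i$-dimensional image of $\bar y$. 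One checks these two excluded families of lines are disjoint, so there are $[n-k+1]-[i]-1$ admissible lines, whence $\lvert\mathcal A_{xy}^{+}\cup\mathcal A_{xy}^{0}\rvert=[i]\bigl([n-k+1]-[i]-1\bigr)$. Imposing further $z\subseteq W$, i.e.\ $\bar z\subseteq\bar W:=W/U$ (which is $2i$-dimensional), forces $\bar z/\bar h$ to lie in the $(i+1)$-dimensional space $\bar W/\bar h$ and isolates $\mathcal A_{xy}^{0}$; this gives $\lvert\mathcal A_{xy}^{0}\rvert=[i]\bigl([i+1]-[i]-1\bigr)=(q-1)[i]^{2}$, and subtraction plus routine simplification of $q$-integers gives $\lvert\mathcal A_{xy}^{+}\rvert=q^{i+1}[i][n-k-i]$. (For $\mathcal A_{xy}^{0}$ there is also the shortcut that $\bar z\subseteq\bar W=\bar x\oplus\bar y$ with $\bar z\cap\bar y=0$ is the graph of a linear map $\bar x\to\bar y$ of rank $1$, and counting rank-$1$ maps again yields $(q-1)[i]^{2}$.)

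For $\mathcal A_{xy}^{-}$, every such $z$ satisfies $z\subseteq W$ (by definition of $\mathcal A_{xy}^{-}$) and $U\not\subseteq z$, hence $U\not\subseteq h$ and $P:=h\cap U$ has dimension $k-i-1$. Fixing a hyperplane $h$ of $x$ with $U\not\subseteq h$---there are $[k]-[i]$ of these---I would pass to $W/P$, which has dimension $2i+1$ and in which $x/P,y/P,z/P$ are $(i+1)$-dimensional, $(x/P)\cap(y/P)=U/P$ is a line, $(x/P)+(y/P)=W/P$, and $h/P$ is an $i$-dimensional subspace of $x/P$ with $(h/P)\cap(y/P)=0$. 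The key point is that, once $h/P\subseteq z/P$ and $z\cap x=h$, the requirement $\dim(z\cap y)=k-i$ holds automatically: since $h/P$ meets $y/P$ trivially and has complementary dimension in $W/P$, one gets $\dim\bigl((z/P)\cap(y/P)\bigr)=1$ and hence $\dim(z\cap y)=k-i$. Thus the admissible $z$ correspond to the lines of $(W/P)/(h/P)$ other than the line $x/h$, of which there are $[i+1]-1=q[i]$, giving $\lvert\mathcal A_{xy}^{-}\rvert=\bigl([k]-[i]\bigr)q[i]=q^{i+1}[i][k-i]$.

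The only background facts needed are the behavior of the dimension of an intersection under a quotient by a subspace contained in both terms, and that an $m$-dimensional $\FF_q$-space has $[m]$ lines and $[m]$ hyperplanes. I expect the main obstacle to be bookkeeping: carefully checking in each case that the incidence conditions defining $\mathcal A_{xy}^{+},\mathcal A_{xy}^{0},\mathcal A_{xy}^{-}$ survive the relevant quotient, that the various families of forbidden lines are pairwise disjoint so the count is a plain subtraction, and finally that the $q$-integer expressions simplify to the stated closed forms. As an independent check one verifies $\lvert\mathcal A_{xy}^{+}\rvert+\lvert\mathcal A_{xy}^{0}\rvert+\lvert\mathcal A_{xy}^{-}\rvert=a_{i}=\kappa-b_{i}-c_{i}$ using (\ref{sizebc}).
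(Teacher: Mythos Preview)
Your proof is correct and follows the same approach as the paper, which simply states ``Routine from counting.'' You have supplied the details the paper omits: the quotient by $U$ to handle $\mathcal{A}_{xy}^{+}\cup\mathcal{A}_{xy}^{0}$, the further restriction to $\bar W$ to isolate $\mathcal{A}_{xy}^{0}$, and the quotient by $P=h\cap U$ inside $W$ together with the complementarity of $h/P$ and $y/P$ to handle $\mathcal{A}_{xy}^{-}$, all check out.
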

\begin{proof}
    Routine from counting.
\end{proof}

Observe that the values in (\ref{acount2}) depend only on $\partial(x,y)$.

\begin{definition}
    We refer to Lemma \ref{acount}. For $1<i<k$, define
\begin{equation}
\label{acountdef}
    a_i^{+}=\bigl\vert\mathcal{A}_{xy}^{+}\bigr\vert, \qquad \qquad a_i^{0}=\bigl\vert\mathcal{A}_{xy}^{0}\bigr\vert, \qquad \qquad a_i^{-}=\bigl\vert\mathcal{A}_{xy}^{-}\bigr\vert,
\end{equation}
where $i=\partial(x,y)$.
\end{definition}

Note that $a_i^{+}+a_i^{0}+a_i^{-}=a_i$ for $1<i<k$.

Our next goal is to show that 
$\mathcal{A}_{xy}^{0}$ is an orbit of $\Stab(x,y)$. 

\begin{lemma}
    For $x,y\in X$ such that $1<\partial(x,y)<k$, let $z\in \mathcal{A}_{xy}^{0}$. Then
    \begin{equation*}
        x\cap y\subseteq (z+x)\cap y.
    \end{equation*}
    Moreover,
    \begin{equation*}
        \dim \left(x\cap y\right)+1=\dim \,\bigl((z+x)\cap y\bigr).
    \end{equation*}
\end{lemma}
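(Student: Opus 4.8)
The plan is to analyze the subspace $z+x$ directly and exploit the two defining conditions of $\mathcal{A}_{xy}^{0}$, namely $z+x+y=x+y$ and $z\cap x\cap y=x\cap y$, together with the fact that $z\in\Gamma(x)$ and $\partial(y,z)=\partial(x,y)$. First I would record the dimensions. Write $i=\partial(x,y)$, so $\dim(x\cap y)=k-i$ and $\dim(x+y)=k+i$ by Lemma~\ref{introlem0}. Since $z\in\Gamma(x)$, we have $\dim(z+x)=k+1$ and $\dim(z\cap x)=k-1$. The inclusion $x\cap y\subseteq z+x$ is immediate because $x\cap y\subseteq x\subseteq z+x$; combined with $x\cap y\subseteq y$ this gives $x\cap y\subseteq (z+x)\cap y$, which is the first assertion.

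For the dimension count, I would apply Lemma~\ref{modularity} (modularity) to the pair $z+x$ and $y$:
\begin{equation*}
    \dim\bigl((z+x)\cap y\bigr)=\dim(z+x)+\dim y-\dim\bigl((z+x)+y\bigr)=(k+1)+k-\dim\bigl(z+x+y\bigr).
\end{equation*}
Now the condition $z+x+y=x+y$ gives $\dim(z+x+y)=k+i$, so the right-hand side equals $(k+1)+k-(k+i)=k-i+1=\dim(x\cap y)+1$. This is exactly the ``moreover'' statement, and it also confirms that the inclusion $x\cap y\subseteq (z+x)\cap y$ is strict. So the whole argument reduces to two applications of modularity plus the dimension facts already available from Lemma~\ref{introlem0} and the adjacency $z\in\Gamma(x)$.

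The only point requiring a little care—and the likeliest place for a slip—is verifying that $\dim(z+x+y)=k+i$ rather than something larger: this is where the hypothesis $z\in\mathcal{A}_{xy}^{0}$ (as opposed to $\mathcal{A}_{xy}^{+}$) is used, since $z+x+y=x+y$ is built into the definition of $\mathcal{A}_{xy}^{0}$. Once that is in hand, everything else is forced. I would also note in passing that the condition $z\cap x\cap y=x\cap y$ and the value $\partial(y,z)=\partial(x,y)$ are not actually needed for this particular lemma; they will presumably be used in the companion lemmas that pin down $(z+x)\cap y$ and $(z\cap x)+y$ more precisely, so I would not invoke them here.
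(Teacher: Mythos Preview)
Your argument is correct and is precisely the ``routine'' computation the paper alludes to: the inclusion follows from $x\cap y\subseteq x\subseteq z+x$, and the dimension count is one application of Lemma~\ref{modularity} together with the defining condition $z+x+y=x+y$ of $\mathcal{A}_{xy}^{0}$. Your side remark that neither $z\cap x\cap y=x\cap y$ nor $\partial(y,z)=\partial(x,y)$ is used here is accurate and worth keeping.
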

\begin{proof}
    Routine from the definition of $\mathcal{A}_{xy}^{0}$ and linear algebra.
\end{proof}

\begin{lemma}
\label{eta}
    For $x,y\in X$ such that $1<\partial(x,y)<k$, let $z\in \mathcal{A}_{xy}^{0}$. Then there exist vectors
    \begin{equation*}
        \psi\in (z+x)\cap y, \qquad \qquad \eta\in z,\qquad \qquad \varrho\in x
    \end{equation*}
    such that 
    \begin{equation*}
        \quad \psi\not \in x\cap y, \qquad \qquad \eta\not\in z\cap x, \qquad \qquad \varrho\not\in z\cap x,
    \end{equation*}
    \begin{equation*}
        \quad \psi=\eta+\varrho.
    \end{equation*}
\end{lemma}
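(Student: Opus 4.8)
The plan is to construct the vectors $\psi,\eta,\varrho$ explicitly and then verify the three non-membership conditions by contradiction. First I would invoke the preceding lemma, which gives both $x\cap y\subseteq (z+x)\cap y$ and $\dim\bigl((z+x)\cap y\bigr)=\dim(x\cap y)+1$. Since $(z+x)\cap y$ strictly contains $x\cap y$, I may choose $\psi\in (z+x)\cap y$ with $\psi\notin x\cap y$. Because $\psi\in z+x$, I can write $\psi=\eta+\varrho$ for some $\eta\in z$ and $\varrho\in x$, and this is the decomposition we want; it remains only to check the non-membership conditions.

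The condition $\psi\notin x\cap y$ holds by the choice of $\psi$. For $\eta\notin z\cap x$: suppose instead $\eta\in z\cap x$. Then $\eta\in x$, and since $\varrho\in x$ we get $\psi=\eta+\varrho\in x$; combined with $\psi\in y$ this forces $\psi\in x\cap y$, contradicting the choice of $\psi$. For $\varrho\notin z\cap x$ I would first observe that $z\cap y=x\cap y$. Indeed, from $z\in\mathcal{A}_{xy}^{0}$ we have $z\cap x\cap y=x\cap y$, hence $x\cap y\subseteq z$ and so $x\cap y\subseteq z\cap y$; on the other hand, by Lemma \ref{introlem0}(i), $\dim(z\cap y)=k-\partial(y,z)=k-\partial(x,y)=\dim(x\cap y)$, so the two subspaces coincide. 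Now suppose $\varrho\in z\cap x$; then $\varrho\in z$, and since $\eta\in z$ we get $\psi=\eta+\varrho\in z$; combined with $\psi\in y$ this gives $\psi\in z\cap y=x\cap y$, again a contradiction. This finishes the argument.

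Since everything reduces to a single dimension count and two short contradiction arguments, I do not expect a genuine obstacle here; the only point requiring a little care is the identity $z\cap y=x\cap y$, which is exactly what rules out the case $\varrho\in z\cap x$ and which is where the hypothesis $\partial(y,z)=\partial(x,y)$ (as opposed to merely $z\in\Gamma(x)$) is used.
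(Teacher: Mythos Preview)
Your argument is correct and follows the same approach as the paper: pick $\psi\in(z+x)\cap y\setminus(x\cap y)$, decompose $\psi=\eta+\varrho$ with $\eta\in z$, $\varrho\in x$, and rule out $\eta\in z\cap x$ and $\varrho\in z\cap x$ by contradiction. The paper simply asserts $\psi\notin z$ and uses that directly, whereas you unpack this via $z\cap y=x\cap y$; these are the same observation, and your version is arguably the more careful write-up.
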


\begin{proof}
    Pick $\psi\in (z+x)\cap y$ such that $\psi\not\in x\cap y$. Note that $\psi\in z+x$. Also note that $\psi\not\in x$ and $\psi\not\in z$.

    Hence, $\psi$ is a linear combination of some nonzero vector $\eta\in z$ and some nonzero vector $\varrho\in x$. We assume without loss that $\psi=\eta+\varrho$.

    Assume that $\eta\in z\cap x$. Then $\psi=\eta+\varrho\in x$, which is a contradiction. Hence, $\eta\not\in z\cap x$.

    Assume that $\varrho\in z\cap x$. Then $\psi=\eta+\varrho\in z$, which is a contradiction. Hence, $\varrho\not\in z\cap x$. The result follows.
\end{proof}

\begin{lemma}
\label{6cor}
    For $x,y\in X$ such that $1<\partial(x,y)<k$, let $z\in \mathcal{A}_{xy}^{0}$. Let the vectors $\psi,\eta,\varrho$ be from Lemma \ref{eta}.
    Then
    \begin{align}
        &z+\mathbb{F}\psi=z+x, \qquad \qquad z+\mathbb{F}\varrho=z+x, \qquad \qquad (z\cap x)+\mathbb{F}\eta=z, \label{6cor1}\\
        &x+\mathbb{F}\psi=z+x, \qquad \qquad x+\mathbb{F}\eta=z+x, \qquad \qquad (z\cap x)+\mathbb{F}\varrho=x,\label{6cor2}\\
        &\qquad \qquad \qquad \qquad \; (x\cap y)+\mathbb{F}\psi=(z+x)\cap y.\label{6cor3}
    \end{align}
    Moreover, for each equation in {\rm{(\ref{6cor1})}}, {\rm{(\ref{6cor2})}}, {\rm{(\ref{6cor3})}} the sum on the left is direct.
\end{lemma}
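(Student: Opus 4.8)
The plan is to establish all the claimed equations in (\ref{6cor1})--(\ref{6cor3}) by combining the three facts provided by Lemma \ref{eta}: that $\psi=\eta+\varrho$, that $\psi\notin x\cap y$, and that $\eta,\varrho\notin z\cap x$. Throughout I will use Lemma \ref{modularity} (modularity of dimension) and the basic dimension count $\dim(z+x)=k+1$, which follows from $z\in\mathcal{A}_{xy}^{0}\subseteq\mathcal{A}_{xy}\subseteq\Gamma(x)$, since $z$ adjacent to $x$ forces $\dim(z\cap x)=k-1$ and hence $\dim(z+x)=k+1$ by modularity. Similarly $\dim\bigl((z+x)\cap y\bigr)=\dim(x\cap y)+1$ comes from the lemma immediately preceding Lemma \ref{eta}.

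First I would handle (\ref{6cor1}) and (\ref{6cor2}). For each of the six equations here, the subspace on the left clearly contains the relevant smaller subspace, and adding one vector can raise the dimension by at most one; so it suffices to check that the sum is \emph{not} equal to the smaller subspace, i.e. that the added vector lies outside it. For $z+\mathbb{F}\psi=z+x$: from $\psi=\eta+\varrho$ with $\eta\in z$ we get $\psi\equiv\varrho\pmod z$, and $\varrho\notin z$ (since $\varrho\notin z\cap x$ but $\varrho\in x$, so if $\varrho\in z$ then $\varrho\in z\cap x$, contradiction); hence $\psi\notin z$, so $\dim(z+\mathbb{F}\psi)=k$, and since $z+\mathbb{F}\psi\subseteq z+x$ with $\dim(z+x)=k+1$... wait, that gives only $k$, not $k+1$. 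I need to recheck: actually $z+\mathbb{F}\psi$ should equal $z+x$ only if $\dim=k+1$; but $z$ has dimension $k$ and adding one vector gives at most $k+1$. Since $\psi\in z+x$ and $\psi\notin z$, we get $z\subsetneq z+\mathbb{F}\psi\subseteq z+x$, and as $\dim(z+x)=\dim z+1$ the containment $z+\mathbb{F}\psi\subseteq z+x$ must be equality. Good — that is the right argument, and the analogous reasoning handles $z+\mathbb{F}\varrho=z+x$ (need $\varrho\notin z$, shown above), $x+\mathbb{F}\psi=z+x$ (need $\psi\notin x$: from $\psi=\eta+\varrho$, $\psi\equiv\eta\pmod x$ and $\eta\notin x$ since $\eta\notin z\cap x$ while $\eta\in z$), and $x+\mathbb{F}\eta=z+x$ (need $\eta\notin x$, just shown). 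For $(z\cap x)+\mathbb{F}\eta=z$ and $(z\cap x)+\mathbb{F}\varrho=x$: here $\dim z=\dim(z\cap x)+1$ and $\dim x=\dim(z\cap x)+1$, and $\eta\notin z\cap x$ with $\eta\in z$, respectively $\varrho\notin z\cap x$ with $\varrho\in x$, so the same "contains properly, differs by one dimension" argument applies.

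Next, for (\ref{6cor3}), namely $(x\cap y)+\mathbb{F}\psi=(z+x)\cap y$: the inclusion $\subseteq$ holds because $x\cap y\subseteq(z+x)\cap y$ (shown in the preceding lemma) and $\psi\in(z+x)\cap y$ by construction in Lemma \ref{eta}; then since $\psi\notin x\cap y$ we have proper containment of $x\cap y$ in the left side, and since $\dim\bigl((z+x)\cap y\bigr)=\dim(x\cap y)+1$ the inclusion must be equality, exactly as before. Finally, for the "sum is direct" assertions: in (\ref{6cor1}) and (\ref{6cor2}) each is a sum of a $k$-dimensional (or $(k-1)$-dimensional, in the last of each row) subspace with a line, totalling $k+1$ (resp.\ $k$) dimensions, matching the dimension of the sum we just computed; by modularity the intersection is therefore zero, i.e.\ the sum is direct. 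Likewise for (\ref{6cor3}): $\dim(x\cap y)+1=\dim\bigl((z+x)\cap y\bigr)$ forces the intersection of $x\cap y$ with $\mathbb{F}\psi$ to be zero.

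I do not anticipate a serious obstacle here; the whole lemma is a sequence of "properly contained, dimensions differ by one, hence equal" arguments together with modularity for the directness claims. The only point requiring a little care is bookkeeping the dimension facts ($\dim(z\cap x)=k-1$, $\dim(z+x)=k+1$, $\dim\bigl((z+x)\cap y\bigr)=\dim(x\cap y)+1$) and repeatedly verifying that the vector being adjoined genuinely lies outside the smaller subspace, which in every case reduces to the three non-membership statements handed to us by Lemma \ref{eta} together with $\psi=\eta+\varrho$.
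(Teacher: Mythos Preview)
Your argument is correct and is exactly the kind of routine dimension-counting verification the paper has in mind when it writes ``Immediate from linear algebra.'' Aside from the brief self-correction (which you resolved correctly), every step is sound and matches the intended approach.
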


\begin{proof}
    Immediate from linear algebra.
\end{proof}

\begin{lemma}
\label{a0orbit}
    For $x,y\in X$ such that $1<\partial(x,y)<k$, the set $\mathcal{A}_{xy}^{0}$ is an orbit of the $\Stab(x,y)$-action on $\Gamma(x)$.
\end{lemma}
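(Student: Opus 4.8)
The plan is to establish two things: that $\mathcal{A}_{xy}^{0}$ is invariant under $\Stab(x,y)$, and that $\Stab(x,y)$ acts transitively on it. Invariance is routine: any $\sigma\in\Stab(x,y)$ fixes $x$ and $y$, hence also fixes $x+y$ and $x\cap y$, and $\sigma$ preserves $\partial$ by \cite[Lemma~3.9]{Seong}; applying $\sigma$ to each defining condition in Definition \ref{caladef} shows $\sigma\bigl(\mathcal{A}_{xy}^{0}\bigr)=\mathcal{A}_{xy}^{0}$, and in particular $\sigma$ maps $\mathcal{A}_{xy}^{0}$ into $\Gamma(x)$. Since $\mathcal{A}_{xy}^{0}$ is nonempty by Lemma \ref{acount} (its cardinality $(q-1)[i]^{2}$ is positive), it suffices to prove transitivity. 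Throughout write $i=\partial(x,y)$, so that $\dim(x\cap y)=k-i$ and $\dim(x+y)=k+i$ by Lemma \ref{introlem0}.

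For transitivity, given $z,z'\in\mathcal{A}_{xy}^{0}$ the plan is to build two bases of $V$ with the same combinatorial shape, one adapted to $z$ and one adapted to $z'$, agreeing on the parts spanning $x\cap y$ and a fixed complement of $x+y$, and then let $\sigma$ be the linear map matching the first basis to the second. First I would fix a basis $R=\{r_{1},\dots,r_{k-i}\}$ of $x\cap y$ and vectors $u_{1},\dots,u_{n-k-i}$ with $(x+y)\oplus\langle u_{1},\dots,u_{n-k-i}\rangle=V$, to be used in both bases. Now consider $z$. From the defining condition $z\cap x\cap y=x\cap y$ we get $x\cap y\subseteq z\cap x$, and $\dim(z\cap x)=k-1$ since $z\in\Gamma(x)$, so $R$ extends to a basis $R\cup\{a_{1},\dots,a_{i-1}\}$ of $z\cap x$. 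Let $\psi,\eta,\varrho$ be as in Lemma \ref{eta}. Using the direct-sum assertions of Lemma \ref{6cor}: $R\cup\{a_{1},\dots,a_{i-1}\}\cup\{\varrho\}$ is a basis of $x$, $R\cup\{a_{1},\dots,a_{i-1}\}\cup\{\eta\}$ is a basis of $z$, and $(x\cap y)\oplus\mathbb{F}\psi=(z+x)\cap y$; since $\psi\notin x\cap y$, extend $R\cup\{\psi\}$ to a basis $R\cup\{\psi\}\cup\{b_{1},\dots,b_{i-1}\}$ of $y$.

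The key point is that $R\cup\{a_{1},\dots,a_{i-1}\}\cup\{\varrho,\eta,b_{1},\dots,b_{i-1}\}$ is then a basis of $x+y$, which I would check by dimension count: $R\cup\{a_{j}\}\cup\{\varrho\}$ spans $x$; adjoining $\eta$ yields $z+x$ (dimension $k+1$) because $x+\mathbb{F}\eta=z+x$ by Lemma \ref{6cor}; adjoining $b_{1},\dots,b_{i-1}$ adds $i-1$ further independent directions, since $\langle b_{1},\dots,b_{i-1}\rangle\subseteq y$ meets $z+x$ only inside $y\cap(z+x)=(x\cap y)\oplus\mathbb{F}\psi=\langle R,\psi\rangle$, whose intersection with $\langle b_{1},\dots,b_{i-1}\rangle$ is $\{0\}$ by the choice of the $b_{j}$. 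This produces $k+i$ independent vectors lying in $z+x+y=x+y$ (using the defining condition $z+x+y=x+y$), hence a basis of $x+y$; appending $u_{1},\dots,u_{n-k-i}$ gives a basis of $V$. Running the identical construction for $z'$ produces primed vectors $a'_{1},\dots,a'_{i-1},\varrho',\eta',b'_{1},\dots,b'_{i-1}$ with $\psi'=\eta'+\varrho'$ and a corresponding basis of $V$ built from the same $R$ and the same $u_{1},\dots,u_{n-k-i}$.

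Finally I would define $\sigma\in GL(V)$ by $r_{j}\mapsto r_{j}$, $a_{j}\mapsto a'_{j}$, $\varrho\mapsto\varrho'$, $\eta\mapsto\eta'$, $b_{j}\mapsto b'_{j}$, $u_{j}\mapsto u_{j}$; it is invertible since it carries a basis of $V$ to a basis of $V$. It sends the basis $R\cup\{a_{j}\}\cup\{\varrho\}$ of $x$ to the basis $R\cup\{a'_{j}\}\cup\{\varrho'\}$ of $x$, so $\sigma(x)=x$; it sends the basis $R\cup\{\eta+\varrho\}\cup\{b_{j}\}$ of $y$ (here using $\psi=\eta+\varrho$) to $R\cup\{\eta'+\varrho'\}\cup\{b'_{j}\}=R\cup\{\psi'\}\cup\{b'_{j}\}$, a basis of $y$, so $\sigma(y)=y$; and it sends the basis $R\cup\{a_{j}\}\cup\{\eta\}$ of $z$ to the basis $R\cup\{a'_{j}\}\cup\{\eta'\}$ of $z'$, so $\sigma(z)=z'$. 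Thus $\sigma\in\Stab(x,y)$ carries $z$ to $z'$, giving transitivity and completing the proof. I expect the only genuinely delicate step to be the verification that the adapted vectors form a basis of $x+y$, that is, that the pieces $x$, the extra vector $\eta$, and the $i-1$ extra vectors $b_{j}$ from $y$ occupy the expected general position; this is precisely where the direct-sum assertions of Lemma \ref{6cor} are needed, and the rest is bookkeeping.
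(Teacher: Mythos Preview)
Your proposal is correct and follows essentially the same approach as the paper's proof: both use Lemmas~\ref{eta} and~\ref{6cor} to build, for each of $z$ and $z'$, a basis of $V$ adapted to the chain $x\cap y\subseteq z\cap x\subseteq x\subseteq z+x\subseteq x+y\subseteq V$, and then define $\sigma\in\Stab(x,y)$ by matching corresponding basis vectors. The only cosmetic differences are that you fix a single complement of $x+y$ in $V$ for both $z$ and $z'$ (the paper chooses separate complements $\mathcal{W},\mathcal{W}'$ and maps one to the other), and you verify the basis of $x+y$ by a direct dimension-count argument rather than invoking Lemma~\ref{linin}.
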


\begin{proof}
    By Lemma \ref{staborbit}, the set $\mathcal{A}_{xy}^{0}$ is a disjoint union of orbits of $\Stab(x,y)$. We now show that $\mathcal{A}_{xy}^{0}$ is a single orbit.

    Let $z,z'\in \mathcal{A}_{xy}^{0}$. It suffices to show that there exists $\sigma\in \Stab(x,y)$ that sends $z\mapsto z'$. 

    Let the vectors $\psi, \eta, \varrho$ be from Lemma \ref{eta}.
    Let the subset $\mathcal{R}\subseteq V$ form a basis for $x\cap y$. 
    Extend the basis $\mathcal{R}$ for $x\cap y$ to a basis $\mathcal{R}\cup \mathcal{S}$ for $z\cap x$. 
    By the third equation in (\ref{6cor1}), $\mathcal{R}\cup \mathcal{S}\cup \{\eta\}$ forms a basis for $z$. 
    By the third equation in (\ref{6cor2}), $\mathcal{R}\cup \mathcal{S}\cup \{\varrho\}$ forms a basis for $x$.
    By (\ref{6cor3}), $\mathcal{R}\cup \{\psi\}$ forms a basis for $(z+x)\cap y$. 
    By the first equation in (\ref{6cor2}), $\mathcal{R}\cup \mathcal{S}\cup \{\psi,\varrho\}$ forms a basis for $z+x$. 
    Extend the basis $\mathcal{R}\cup \{\psi\}$ for $(z+x)\cap y$ to a basis $\mathcal{R}\cup \mathcal{Q}\cup \{\psi\}$ for $y$.
    By Lemma \ref{linin}, $\mathcal{R}\cup \mathcal{S}\cup \mathcal{Q}\cup \{\psi,\varrho\}$ forms a basis for $x+y$. 
    Extend the basis $\mathcal{R}\cup \mathcal{S}\cup \mathcal{Q}\cup \{\psi,\varrho\}$ for $x+y$ to a basis $\mathcal{R}\cup \mathcal{S}\cup \mathcal{Q}\cup \mathcal{W}\cup \{\psi,\varrho\}$ for $V$.

    Recall the element $z'\in \mathcal{A}_{xy}^{0}$. Consider the corresponding vectors $\psi', \eta', \varrho'$ from Lemma \ref{eta}.
    Extend the basis $\mathcal{R}$ for $x\cap y$ to a basis $\mathcal{R}\cup \mathcal{S}'$ for $z'\cap x$. 
    By the third equation in (\ref{6cor1}), $\mathcal{R}\cup \mathcal{S}'\cup \bigl\{\eta'\bigr\}$ forms a basis for $z'$. 
    By the third equation in (\ref{6cor2}), $\mathcal{R}\cup \mathcal{S}'\cup \bigl\{\varrho'\bigr\}$ forms a basis for $x$.
    By (\ref{6cor3}), $\mathcal{R}\cup \bigl\{\psi'\bigr\}$ forms a basis for $\bigl(z'+x\bigr)\cap y$. 
    By the first equation in (\ref{6cor2}), $\mathcal{R}\cup \mathcal{S}'\cup \bigl\{\psi',\varrho'\bigr\}$ forms a basis for $z'+x$. 
    Extend the basis $\mathcal{R}\cup \bigl\{\psi'\bigr\}$ for $\bigl(z'+x\bigr)\cap y$ to a basis $\mathcal{R}\cup \mathcal{Q}'\cup \bigl\{\psi'\bigr\}$ for $y$.
    By Lemma \ref{linin}, $\mathcal{R}\cup \mathcal{S}'\cup \mathcal{Q}'\cup \bigl\{\psi',\varrho'\bigr\}$ forms a basis for $x+y$. 
    Extend the basis $\mathcal{R}\cup \mathcal{S}'\cup \mathcal{Q}'\cup \bigl\{\psi',\varrho'\bigr\}$ for $x+y$ to a basis $\mathcal{R}\cup \mathcal{S}'\cup \mathcal{Q}'\cup \mathcal{W}'\cup \bigl\{\psi',\varrho'\bigr\}$ for $V$.

    By linear algebra, there exists $\sigma\in GL(V)$ that sends $\mathcal{S}\mapsto \mathcal{S}'$, $\mathcal{Q}\mapsto \mathcal{Q}'$, $\mathcal{W}\mapsto \mathcal{W}'$, $\psi\mapsto \psi'$, $\varrho\mapsto \varrho'$ and acts as the identity on $\mathcal{R}$. By construction, $\sigma$ is contained in $\Stab(x,y)$ and sends $z\mapsto z'$. The result follows.
\end{proof}

\begin{lemma}
    \label{apmorbit}
    For $x,y\in X$ such that $1<\partial(x,y)<k$, the sets $\mathcal{A}_{xy}^{+}, \mathcal{A}_{xy}^{-}$ are orbits of the $\Stab(x,y)$-action on $\Gamma(x)$.
\end{lemma}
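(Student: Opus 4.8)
The plan is to follow the pattern of the proof of Lemma~\ref{a0orbit}. Throughout, fix $x,y\in X$ and write $i=\partial(x,y)$, so that $1<i<k$. Each of the sets $\mathcal{A}_{xy}^{+}$ and $\mathcal{A}_{xy}^{-}$ is defined (Definition~\ref{caladef}) purely in terms of $z$, $x$, $y$, the operations $+$ and $\cap$, and the distance condition $\partial(y,z)=i$, all of which are preserved by every element of $\Stab(x,y)$; hence each set is a union of $\Stab(x,y)$-orbits, and by Lemma~\ref{acount} each is nonempty. So it suffices to prove that $\Stab(x,y)$ acts transitively on each set: given $z,z'$ in the same set, I will build a basis of $V$ adapted to the subspace configuration determined by $z$ and a second basis adapted to the configuration determined by $z'$, arrange that the two bases share a common sub-basis spanning $x\cap y$ in a compatible way, and then take $\sigma$ to be the linear map sending the first basis to the second; by construction $\sigma$ will fix $x$ and $y$ and send $z$ to $z'$.

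Consider first $\mathcal{A}_{xy}^{+}$, and let $z\in\mathcal{A}_{xy}^{+}$. From $z\cap x\cap y=x\cap y$ we get $x\cap y\subseteq z\cap x$, and since $z\in\Gamma(x)$ we have $\dim(z\cap x)=k-1$ and $\dim(z+x)=k+1$. Using Lemma~\ref{introlem0} and modularity (Lemma~\ref{modularity}) together with the hypothesis $z+x+y\supsetneq x+y$, one finds $\dim(z+x+y)=k+i+1$ and therefore $(z+x)\cap y=x\cap y$. Now fix once and for all a basis $\mathcal{R}$ of $x\cap y$. For each such $z$ one extends $\mathcal{R}$ to a basis $\mathcal{R}\cup\mathcal{S}$ of $z\cap x$, chooses $\eta\in z\setminus(z\cap x)$ and $\varrho\in x\setminus(z\cap x)$ so that $\mathcal{R}\cup\mathcal{S}\cup\{\eta\}$ and $\mathcal{R}\cup\mathcal{S}\cup\{\varrho\}$ are bases of $z$ and $x$, extends $\mathcal{R}$ to a basis $\mathcal{R}\cup\mathcal{Q}$ of $y$, and uses Lemma~\ref{linin} (applied first to $x,y$ and then to $z+x,y$, the latter using $(z+x)\cap y=x\cap y$) to conclude that $\mathcal{R}\cup\mathcal{S}\cup\mathcal{Q}\cup\{\eta,\varrho\}$ is a basis of $z+x+y$; finally one extends by a set $\mathcal{W}$ to a basis of $V$. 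Carrying this out for $z$ and for $z'$ produces two bases of $V$ using the \emph{same} $\mathcal{R}$, so the map $\sigma\in GL(V)$ acting as the identity on $\mathcal{R}$ and sending $\mathcal{S}\mapsto\mathcal{S}'$, $\mathcal{Q}\mapsto\mathcal{Q}'$, $\mathcal{W}\mapsto\mathcal{W}'$, $\eta\mapsto\eta'$, $\varrho\mapsto\varrho'$ lies in $\Stab(x,y)$ and sends $z\mapsto z'$.

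The case $\mathcal{A}_{xy}^{-}$ is parallel, but the relevant chain of subspaces is different. For $z\in\mathcal{A}_{xy}^{-}$ the hypothesis $z+x+y=x+y$ says $z\subseteq x+y$, and from $z\cap x\cap y\subsetneq x\cap y$ one deduces, again via Lemma~\ref{introlem0} and modularity (for instance by checking $(z\cap x)+y=x+y$), that $\dim(z\cap x\cap y)=k-i-1$, that $z\cap y\neq x\cap y$, and that $\dim\bigl((z\cap y)+(x\cap y)\bigr)=k-i+1$. Here $z\cap x\cap y$ genuinely depends on $z$, so instead of rigidly fixing a basis of $x\cap y$ I fix a basis $\mathcal{R}$ of $x\cap y$ and split it, depending on $z$, as $\mathcal{R}=\mathcal{R}_{0}\cup\{\lambda\}$ with $\mathcal{R}_{0}$ a basis of $z\cap x\cap y$; I also choose $\mu\in(z\cap y)\setminus(z\cap x\cap y)$, noting that $\lambda\in x\setminus(z\cap x)$ and $\mu\in z\setminus(z\cap x)$. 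Extending $\mathcal{R}_{0}$ to a basis $\mathcal{R}_{0}\cup\mathcal{S}$ of $z\cap x$ gives bases $\mathcal{R}_{0}\cup\mathcal{S}\cup\{\lambda\}$ of $x$ and $\mathcal{R}_{0}\cup\mathcal{S}\cup\{\mu\}$ of $z$, and extending $\mathcal{R}_{0}\cup\{\lambda,\mu\}$ to a basis $\mathcal{R}_{0}\cup\{\lambda,\mu\}\cup\mathcal{Q}$ of $y$ and invoking Lemma~\ref{linin} yields a basis $\mathcal{R}_{0}\cup\mathcal{S}\cup\{\lambda,\mu\}\cup\mathcal{Q}$ of $x+y$, which I extend by $\mathcal{W}$ to a basis of $V$. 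Doing the same for $z'$ and letting $\sigma\in GL(V)$ send $\mathcal{R}_{0}\mapsto\mathcal{R}_{0}'$, $\lambda\mapsto\lambda'$, $\mu\mapsto\mu'$, $\mathcal{S}\mapsto\mathcal{S}'$, $\mathcal{Q}\mapsto\mathcal{Q}'$, $\mathcal{W}\mapsto\mathcal{W}'$, one checks $\sigma(x)=x$, $\sigma(y)=y$, $\sigma(z)=z'$; note that here $\sigma$ fixes $x\cap y$ only setwise, which is all that is needed.

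I expect the main obstacle to be the preliminary step of pinning down the subspace configuration before any basis is chosen: the key facts $(z+x)\cap y=x\cap y$ in the $+$ case, and $\dim(z\cap x\cap y)=k-i-1$ together with $z\subseteq x+y$ in the $-$ case, are exactly where the defining conditions of $\mathcal{A}_{xy}^{\pm}$ and the distance hypothesis $\partial(y,z)=i$ enter, and getting these right is what makes the repeated applications of Lemma~\ref{linin} valid. Once the chains of subspaces and their dimensions are fixed, the basis constructions and the verification that $\sigma\in\Stab(x,y)$ with $\sigma(z)=z'$ are routine, just as in Lemma~\ref{a0orbit}; as a sanity check, the cardinalities of the orbits produced this way match those in Lemma~\ref{acount}. (One could alternatively derive the $-$ case from the $+$ case via the duality $u\mapsto u^{\perp}$ of $P$, which exchanges $\mathcal{A}_{xy}^{+}$ and $\mathcal{A}_{xy}^{-}$, but this also exchanges $J_q(n,k)$ with $J_q(n,n-k)$ and so steps outside the standing hypothesis $n>2k$; I will not pursue it.)
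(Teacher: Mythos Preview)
Your proposal is correct and is exactly the expanded version of the paper's own proof, which reads in its entirety ``Similar to Lemma~\ref{a0orbit}.'' You have carried out that similarity in full detail, with the correct identification of the relevant subspace chains (in particular $(z+x)\cap y=x\cap y$ in the $+$ case and $\dim(z\cap x\cap y)=k-i-1$ in the $-$ case) and the appropriate adapted bases, so there is nothing to add.
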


\begin{proof}
    Similar to Lemma \ref{a0orbit}.
\end{proof}

\begin{theorem}
\label{5orbitstheorem}
    For $x,y\in X$ such that $1<\partial(x,y)<k$, the following sets are orbits of the $\Stab(x,y)$-action on $\Gamma(x)$:
    \begin{equation}
        \label{5orbits}
        \mathcal{B}_{xy}, \qquad \qquad \mathcal{C}_{xy}, \qquad \qquad \mathcal{A}_{xy}^{+}, \qquad \qquad \mathcal{A}_{xy}^{0}, \qquad \qquad \mathcal{A}_{xy}^{-}.
    \end{equation}
    Furthermore, these orbits form a partition of $\Gamma(x)$.
\end{theorem}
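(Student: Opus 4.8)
The plan is to assemble Theorem~\ref{5orbitstheorem} from the pieces already in place. The fact that $\mathcal{B}_{xy}$ and $\mathcal{C}_{xy}$ are orbits is Lemma~\ref{bcorbit}; the fact that $\mathcal{A}_{xy}^{0}$ is an orbit is Lemma~\ref{a0orbit}; the fact that $\mathcal{A}_{xy}^{+}$ and $\mathcal{A}_{xy}^{-}$ are orbits is Lemma~\ref{apmorbit}. So the only thing left to argue is that the five sets in (\ref{5orbits}) exhaust $\Gamma(x)$ and are pairwise disjoint.

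For the partition statement, first recall that for any $z\in\Gamma(x)$ we have $|\partial(y,z)-\partial(x,y)|\le 1$ by the triangle inequality applied to the edge $\{x,z\}$, so $\partial(y,z)\in\{\partial(x,y)-1,\partial(x,y),\partial(x,y)+1\}$. Hence $\Gamma(x)=\mathcal{B}_{xy}\cup\mathcal{C}_{xy}\cup\mathcal{A}_{xy}$ by Definition~\ref{calbcdef}, and these three sets are pairwise disjoint since $\partial(y,z)$ takes a single value for each $z$. (One should note in passing that all three are nonempty: $\mathcal{C}_{xy}\ne\varnothing$ since $1<\partial(x,y)$ forces $c_i=[i]^2>1>0$, $\mathcal{B}_{xy}\ne\varnothing$ since $\partial(x,y)<k$ forces $b_i>0$, and $\mathcal{A}_{xy}\ne\varnothing$ since $a_i=\kappa-b_i-c_i>0$ for a Grassmann graph with $1<i<k$.) Then Lemma~\ref{acomplete} says $\mathcal{A}_{xy}$ is the disjoint union of $\mathcal{A}_{xy}^{+}$, $\mathcal{A}_{xy}^{0}$, $\mathcal{A}_{xy}^{-}$. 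Combining, $\Gamma(x)$ is the disjoint union of the five sets in (\ref{5orbits}), which is exactly the ``partition of $\Gamma(x)$'' assertion.

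Finally, since each of the five sets is an orbit of $\Stab(x,y)$ by the lemmas cited above, and together they partition $\Gamma(x)$, they are precisely the orbits of the $\Stab(x,y)$-action on $\Gamma(x)$ — there can be no further orbit, as every vertex of $\Gamma(x)$ already lies in one of these five. This completes the proof. There is no real obstacle here: the substance was carried out in Lemmas~\ref{a0orbit} and~\ref{apmorbit} (the orbit arguments via Lemma~\ref{staborbit} and an explicit change-of-basis construction), and the present theorem is just the bookkeeping that glues those orbit results to the elementary disjointness/exhaustion facts. The one point deserving a sentence of care is why no two of the five sets can coincide or overlap, which is handled by the distance trichotomy together with Lemma~\ref{acomplete}; I would state that explicitly rather than leave it implicit.
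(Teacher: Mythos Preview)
Your proposal is correct and follows essentially the same approach as the paper: cite Lemmas~\ref{bcorbit}, \ref{a0orbit}, \ref{apmorbit} for the orbit assertions, then combine Lemma~\ref{acomplete} with the fact that $\mathcal{B}_{xy},\mathcal{C}_{xy},\mathcal{A}_{xy}$ partition $\Gamma(x)$. Your version is more explicit (spelling out the triangle-inequality trichotomy and the nonemptiness checks), but the underlying argument is the same bookkeeping the paper gives in two sentences.
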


\begin{proof}
    For the first assertion, combine Lemmas \ref{bcorbit}, \ref{a0orbit}, \ref{apmorbit}. The second assertion is immediate from Lemma \ref{acomplete} and the fact that the disjoint union of $\mathcal{B}_{xy}, \mathcal{C}_{xy}, \mathcal{A}_{xy}$ is equal to $\Gamma(x)$.
\end{proof}

\begin{definition}
    Let $x,y\in X$ satisfy $1<\partial(x,y)<k$, and consider the partition of $\Gamma(x)$ given in (\ref{5orbits}). By construction, this partition is equitable in the sense of \cite[p.~159]{Schwenk}. We call this partition the {\it $y$-partition of $\Gamma(x)$}.
\end{definition}

\section{The vectors $A_{xy}^{+},A_{xy}^{0},A_{xy}^{-}$}

Pick $x,y\in X$ such that $1<\partial(x,y)<k$. Recall the sets $\mathcal{A}_{xy}^{+}, \mathcal{A}_{xy}^{0}, \mathcal{A}_{xy}^{-}$ from Definition \ref{caladef}. In this section we use these sets to define some vectors $A_{xy}^{+},A_{xy}^{0},A_{xy}^{-}$ in the Euclidean space $E$. We show that $A_{xy}^{+},A_{xy}^{0},A_{xy}^{-}$ are contained in $\Fix(x,y)$. We write $A_{xy}^{+},A_{xy}^{0},A_{xy}^{-}$ in terms of the geometric basis for $\Fix(x,y)$ and also the combinatorial basis for $\Fix(x,y)$.

\begin{definition}
\label{adef}
For $x,y\in X$ such that $1<\partial(x,y)<k$, define the vectors
\begin{equation}
\label{adef2}
   A_{xy}^{+}=\sum_{z\in \mathcal{A}_{xy}^{+}}\widehat{z},\qquad \qquad A_{xy}^{0}=\sum_{z\in \mathcal{A}_{xy}^{0}}\widehat{z},\qquad \qquad A_{xy}^{-}=\sum_{z\in \mathcal{A}_{xy}^{-}}\widehat{z}.
\end{equation}
Note that $A_{xy}^{+}, A_{xy}^{0},A_{xy}^{-}$ are contained in $E$. We call $A_{xy}^{+}, A_{xy}^{0}, A_{xy}^{-}$ the \emph{characteristic vectors} of $\mathcal{A}_{xy}^{+},\mathcal{A}_{xy}^{0},\mathcal{A}_{xy}^{-}$ respectively.
\end{definition}

By Lemma \ref{acomplete}, $A_{xy}= A_{xy}^{+}+ A_{xy}^{0}+ A_{xy}^{-}$.

\begin{lemma}
    For $x,y\in X$ such that $1<\partial(x,y)<k$, the vectors $A_{xy}^{+}, A_{xy}^{0}, A_{xy}^{-}$ are contained in $\Fix(x,y)$.
\end{lemma}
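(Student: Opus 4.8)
The plan is to show that each of the three vectors $A_{xy}^{+}, A_{xy}^{0}, A_{xy}^{-}$ is fixed by an arbitrary $\sigma\in\Stab(x,y)$. Since $\sigma$ acts linearly on $E$ with $\sigma(\widehat{u})=\widehat{\sigma(u)}$ for all $u\in P$, we have for instance
\begin{equation*}
    \sigma\bigl(A_{xy}^{+}\bigr)=\sum_{z\in\mathcal{A}_{xy}^{+}}\sigma\bigl(\widehat{z}\bigr)=\sum_{z\in\mathcal{A}_{xy}^{+}}\widehat{\sigma(z)},
\end{equation*}
so it suffices to check that $\sigma$ permutes the set $\mathcal{A}_{xy}^{+}$ among itself (and likewise for $\mathcal{A}_{xy}^{0}$ and $\mathcal{A}_{xy}^{-}$); then the displayed sum is just a reindexing of the original sum and $\sigma(A_{xy}^{+})=A_{xy}^{+}$.

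The fact that $\sigma$ permutes each of these three sets is exactly the content of Lemma \ref{apmorbit} and Lemma \ref{a0orbit}: each of $\mathcal{A}_{xy}^{+}, \mathcal{A}_{xy}^{0}, \mathcal{A}_{xy}^{-}$ is an orbit of the $\Stab(x,y)$-action on $\Gamma(x)$, hence in particular is invariant under every element of $\Stab(x,y)$. So the proof reduces to invoking those lemmas and the $GL(V)$-module structure on $E$ from Section 4. Concretely, I would write: let $\sigma\in\Stab(x,y)$; by Lemma \ref{a0orbit} (resp.\ Lemma \ref{apmorbit}) the set $\mathcal{A}_{xy}^{0}$ (resp.\ $\mathcal{A}_{xy}^{+}$, $\mathcal{A}_{xy}^{-}$) is a union of $\Stab(x,y)$-orbits, indeed a single orbit, so $\sigma$ maps it bijectively onto itself; then apply the reindexing argument above to each of the three vectors in Definition \ref{adef}.

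There is essentially no obstacle here — this lemma is a formal consequence of results already established. The only thing to be slightly careful about is to state the reindexing cleanly: $\sigma$ restricted to $\mathcal{A}_{xy}^{0}$ is a bijection $\mathcal{A}_{xy}^{0}\to\mathcal{A}_{xy}^{0}$, so $\{\sigma(z)\mid z\in\mathcal{A}_{xy}^{0}\}=\mathcal{A}_{xy}^{0}$ as sets, whence $\sum_{z}\widehat{\sigma(z)}=\sum_{w}\widehat{w}$ where $w$ ranges over $\mathcal{A}_{xy}^{0}$. Since $\sigma\in\Stab(x,y)$ was arbitrary, $A_{xy}^{0}\in\Fix(x,y)$, and the same argument applies verbatim to $A_{xy}^{+}$ and $A_{xy}^{-}$. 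One could even phrase the whole thing in one sentence as "Immediate from Lemmas \ref{a0orbit} and \ref{apmorbit} together with the $GL(V)$-module structure on $E$," but I would spell out the reindexing step once for the reader's benefit.
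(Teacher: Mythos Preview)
Your proposal is correct and follows exactly the paper's approach: pick $\sigma\in\Stab(x,y)$, use that the three sets are $\Stab(x,y)$-orbits (Lemmas \ref{a0orbit} and \ref{apmorbit}) so $\sigma$ permutes each, and conclude via the $GL(V)$-module structure on $E$. The paper's proof is just the one-sentence version of what you wrote.
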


\begin{proof}
    Pick $\sigma\in \Stab(x,y)$. Since $\mathcal{A}_{xy}^{+},\mathcal{A}_{xy}^{0},\mathcal{A}_{xy}^{-}$ are orbits of the $\Stab(x,y)$-action on $\Gamma(x)$, the map $\sigma$ fixes $\mathcal{A}_{xy}^{+},\mathcal{A}_{xy}^{0},\mathcal{A}_{xy}^{-}$. The result follows. 
\end{proof}

Our next goal is to write $A_{xy}^{+},A_{xy}^{0},A_{xy}^{-}$ in terms of the geometric basis for $\Fix(x,y)$. To do this, we recall the inner products that involve the vectors in the geometric basis for $\Fix(x,y)$.

\begin{lemma}{{\rm{\cite[Theorem~10.4]{Seong}}}}
\label{innprodmat21}
Let $x,y\in X$ satisfy $1<\partial(x,y)<k$. In the following table, for each vector $u$ in the header column, and each vector $v$ in the header row, the $(u,v)$-entry of the table gives the inner product $\left<u,v\right>$. Write $i=\partial(x,y)$.

\begin{center}
    \begin{tabular}{c|c c c c}
        $\left<\;,\;\right>$ & $\widehat{x}$ & $\widehat{y}$ & $\widehat{x\cap y}$ & $\widehat{x+y}$\\ 
         \hline
         \\
         $\widehat{x}$ & $q^k[k][n-k]$ & $[n][k-i]-[k]^2$ & $q^k[k-i][n-k]$ & $q^{k+i}[k][n-k-i]$\\
         \\
        $\widehat{y}$ & $[n][k-i]-[k]^2$ & $q^{k}[k][n-k]$ & $q^k[k-i][n-k]$ & $q^{k+i}[k][n-k-i]$\\
        \\
        $\widehat{x\cap y}$&$q^k[k-i][n-k]$ & $q^k[k-i][n-k]$ & $q^{k-i}[k-i][n-k+i]$ & $q^{k+i}[k-i][n-k-i]$\\
        \\
        $\widehat{x+y}$&$q^{k+i}[k][n-k-i]$ & $q^{k+i}[k][n-k-i]$ & $q^{k+i}[k-i][n-k-i]$ & $q^{k+i}[k+i][n-k-i]$\\
    \end{tabular}
\end{center}
\end{lemma}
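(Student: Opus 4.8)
The plan is to prove the inner product table in Lemma~\ref{innprodmat21} by a uniform application of the general formula in Lemma~\ref{introlem1}(i), namely $\langle \widehat{u},\widehat{v}\rangle = [n][h]-[i][j]$ where $i=\dim u$, $j=\dim v$, $h=\dim(u\cap v)$, together with the dimension bookkeeping for $x,y,x\cap y,x+y$ supplied by Lemma~\ref{introlem0} and Lemma~\ref{modularity}. Write $d=\partial(x,y)$ (the excerpt's $i$). Then the relevant dimensions are $\dim x=\dim y=k$, $\dim(x\cap y)=k-d$, $\dim(x+y)=k+d$. The ten distinct entries of the symmetric table each correspond to a choice of two of these four subspaces, so the whole proof reduces to computing, for each pair, the dimension of the intersection, plugging into Lemma~\ref{introlem1}(i), and simplifying with the identity $q^a[b]-[a]=q^b[a]-[b]$ hidden inside $[a+b]=[a]+q^a[b]$.

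The key steps, in order, are as follows. First, record the four ``diagonal'' entries: for $w\in\{x,y,x\cap y,x+y\}$ we have $\langle\widehat{w},\widehat{w}\rangle=q^{\dim w}[\dim w][n-\dim w]$ by Lemma~\ref{introlem1}(ii), giving $q^k[k][n-k]$ for $x$ and $y$, $q^{k-d}[k-d][n-k+d]$ for $x\cap y$, and $q^{k+d}[k+d][n-k-d]$ for $x+y$. Second, handle $\langle\widehat{x},\widehat{y}\rangle$: this is exactly Lemma~\ref{introlem1}(iii), equal to $[n][k-d]-[k]^2$. Third, the pairs involving $x\cap y$: since $x\cap y\subseteq x$ and $x\cap y\subseteq y$, the intersection of $x\cap y$ with either $x$ or $y$ is $x\cap y$ itself, of dimension $k-d$, so $\langle\widehat{x},\widehat{x\cap y}\rangle=\langle\widehat{y},\widehat{x\cap y}\rangle=[n][k-d]-[k][k-d]=[k-d]\bigl([n]-[k]\bigr)$, which one rewrites as $q^k[k-d][n-k]$ using $[n]=[k]+q^k[n-k]$. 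Fourth, the pairs involving $x+y$: since $x\subseteq x+y$ and $y\subseteq x+y$, we get $x\cap(x+y)=x$ and $y\cap(x+y)=y$, both of dimension $k$, so $\langle\widehat{x},\widehat{x+y}\rangle=\langle\widehat{y},\widehat{x+y}\rangle=[n][k]-[k][k+d]=[k]\bigl([n]-[k+d]\bigr)=q^{k+d}[k][n-k-d]$ using $[n]=[k+d]+q^{k+d}[n-k-d]$. Fifth, the entry $\langle\widehat{x\cap y},\widehat{x+y}\rangle$: because $x\cap y\subseteq x+y$, the intersection is $x\cap y$ of dimension $k-d$, so this equals $[n][k-d]-[k-d][k+d]=[k-d]\bigl([n]-[k+d]\bigr)=q^{k+d}[k-d][n-k-d]$.

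Finally I would double-check the symmetry of the table and that all ten independent entries have been covered, then note that every entry followed from at most one nontrivial containment relation plus the standard $[q]$-identity $[a+b]=[a]+q^a[b]$. There is no real obstacle here: the only thing requiring a moment's care is making sure the dimension of each pairwise intersection is read off correctly (in every case but $\langle\widehat{x},\widehat{y}\rangle$ one of the two subspaces contains the other or they are nested via $x\cap y$ and $x+y$), and then consistently converting expressions of the form $[m]\bigl([n]-[\ell]\bigr)$ into the $q$-power form displayed in the table. The step most likely to need attention in the write-up is simply the algebraic normalization into the exact shape of the table, since each entry admits several equivalent $q$-binomial expressions; I would fix the convention $[n]-[\ell]=q^{\ell}[n-\ell]$ and apply it uniformly.
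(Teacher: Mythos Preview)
Your proposal is correct. Note, however, that the paper does not actually prove this lemma here; it is quoted verbatim from \cite[Theorem~10.4]{Seong} and stated without proof, so there is no in-paper argument to compare against. Your derivation via Lemma~\ref{introlem1}(i),(ii),(iii) together with the containments $x\cap y\subseteq x,y\subseteq x+y$ and the identity $[n]-[\ell]=q^{\ell}[n-\ell]$ is exactly the natural route and is almost certainly how the cited reference establishes it as well.
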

\; \\
For $1<i<k$ let $M_i$ denote the matrix of inner products in Lemma \ref{innprodmat21}.

\begin{lemma}{{\rm{\cite[Lemma~10.10]{Seong}}}}
\label{inverse}
For $1<i<k$ the inverse of the matrix $M_i$ is given by 
\begin{equation*}
    M_{i}^{-1}=\frac{1}{q^{k-i}(q-1)[i]^2[n]}\begin{pmatrix}
q^i&1&-q^i&-1\\
1 & q^i & -q^i & -1\\
-q^i & -q^i & \frac{q^i[k]-[i]}{[k-i]} & 1\\
-1 & -1 & 1 & \frac{q^i[n-k]-[i]}{q^{2i}[n-k-i]}
\end{pmatrix}.
\end{equation*}
\end{lemma}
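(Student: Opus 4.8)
The plan is to verify the asserted formula by direct matrix multiplication. Write $N_i$ for the matrix displayed on the right-hand side. The vectors $\widehat{x},\widehat{y},\widehat{x\cap y},\widehat{x+y}$ form a basis for $\Fix(x,y)$ by Lemma \ref{1basis} (using $1<\partial(x,y)<k$), so their Gram matrix $M_i$ is invertible; hence it suffices to check $M_iN_i=I$, where $I$ is the $4\times 4$ identity matrix, and then $N_i=M_i^{-1}$ follows automatically.

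Before multiplying I would record the single $q$-identity $[a+b]=[a]+q^a[b]$ (valid for all integers $a,b$) and specialize it to the instances needed here, namely $[k]=[k-i]+q^{k-i}[i]$, $[n-k]=[n-k-i]+q^{n-k-i}[i]$, $[n]=[k-i]+q^{k-i}[n-k+i]=[k]+q^k[n-k]$, $[k+i]=[k-i]+q^{k-i}[2i]$, and $[n-k+i]=[n-k-i]+q^{n-k-i}[2i]$, together with $(q-1)[m]=q^m-1$. Equivalently, one may set $a=q^i$, $b=q^{k-i}$, $c=q^{n-k-i}$, so that every $q$-analog occurring in $M_i$ and $N_i$ is of the form $\bigl((\text{monomial in }a,b,c)-1\bigr)/(q-1)$ (for example $[k]=(ab-1)/(q-1)$, $[n]=(a^2bc-1)/(q-1)$, $[n-k+i]=(a^2c-1)/(q-1)$); after clearing the ubiquitous factors $q-1$, the claim $M_iN_i=I$ becomes a finite list of polynomial identities in $a,b,c$, each checked by routine expansion.

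To organize the sixteen entry computations I would exploit the symmetry coming from interchanging the roles of $x$ and $y$: if $\tau$ is the permutation matrix of the transposition $(1\,2)$, then $\tau M_i\tau=M_i$ and $\tau N_i\tau=N_i$, so $M_iN_i$ is invariant under conjugation by $\tau$. This collapses the $16$ entries into $10$ orbit representatives — say those in positions $(1,1),(1,2),(1,3),(1,4),(3,1),(3,3),(3,4),(4,1),(4,3),(4,4)$ — with the remaining six read off from the symmetry. For each representative I would expand the dot product of the relevant row of $M_i$ with the relevant column of $N_i$, substitute the identities above, and verify that the off-diagonal entries vanish while the diagonal entries equal the common denominator $q^{k-i}(q-1)[i]^2[n]$, giving $1$ after division. (As a sanity check, the $(1,1)$ entry already works out: grouping terms via $[n]=[k]+q^k[n-k]$ and $[k]=[k-i]+q^{k-i}[i]$ reduces it to the identity $q^{k-i}[k][n-k+i]-q^k[k-i][n-k]=q^{k-i}[i][n]$, which in turn follows from $[n-k+i]-q^i[n-k]=[i]$ and $[n]=[k-i]+q^{k-i}[n-k+i]$.)

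The main obstacle is purely the algebraic bookkeeping in that last step rather than any conceptual difficulty: the entries of $M_i$ are already degree-two expressions in the $q$-analogs, and pairing them against the two diagonal entries of $N_i$ — which carry the fractions $(q^i[k]-[i])/[k-i]$ and $(q^i[n-k]-[i])/(q^{2i}[n-k-i])$ — produces expressions that must be cleared of denominators (the $[k-i]$ and $[n-k-i]$ in those denominators cancel against factors already present in the corresponding rows of $M_i$) and then expanded carefully. The substitution $a=q^i$, $b=q^{k-i}$, $c=q^{n-k-i}$ is what keeps this manageable, turning every verification into a finite polynomial identity rather than an open-ended manipulation of $q$-binomial coefficients.
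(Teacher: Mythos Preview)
Your approach is correct: verifying $M_iN_i=I$ by direct computation, after noting that $M_i$ is invertible as the Gram matrix of a basis, is exactly the right strategy, and the symmetry reduction via $\tau$ together with the substitution $a=q^i$, $b=q^{k-i}$, $c=q^{n-k-i}$ is a clean way to organize the bookkeeping.

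There is nothing to compare against here: the paper does not prove this lemma but simply imports it from the author's earlier work \cite[Lemma~10.10]{Seong}, so your direct verification is in fact \emph{more} than what the present paper supplies. For a result of this type --- an explicit inverse of an explicit $4\times 4$ matrix --- direct multiplication is the natural proof, and the cited source almost certainly proceeds the same way (possibly without the $\tau$-symmetry shortcut or the $a,b,c$ substitution, which are nice touches on your part).
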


Next we find inner products that involve the vectors $A_{xy}^{+},A_{xy}^{0},A_{xy}^{-}$.

\begin{lemma}
\label{ax}
    For $x,y\in X$ such that $1<\partial(x,y)<k$, we have
    \begin{align}
        \Bigl<A_{xy}^{+},\widehat{x}\Bigr>&= q^{i+1}[i][n-k-i]\Bigl([n][k-1]-[k]^2\Bigr),\label{apx}\\
        \Bigl<A_{xy}^{0},\widehat{x}\Bigr>&= (q-1)[i]^2\Bigl([n][k-1]-[k]^2\Bigr),\label{a0x}\\
        \Bigl<A_{xy}^{-},\widehat{x}\Bigr>&= q^{i+1}[i][k-i]\Bigl([n][k-1]-[k]^2\Bigr),\label{amx}
    \end{align}
    where $i=\partial(x,y)$.
\end{lemma}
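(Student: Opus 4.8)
The plan is to exploit the fact that every vertex appearing in the sums defining $A_{xy}^{+}, A_{xy}^{0}, A_{xy}^{-}$ lies in $\Gamma(x)$, so it is at distance exactly $1$ from $x$. Concretely, I would first recall from Lemma \ref{introlem1}(iii) that for $z\in X$ with $\partial(x,z)=1$ we have $\langle \widehat{x},\widehat{z}\rangle = [n][k-1]-[k]^2$. Since $\mathcal{A}_{xy}^{+},\mathcal{A}_{xy}^{0},\mathcal{A}_{xy}^{-}$ are all subsets of $\mathcal{A}_{xy}\subseteq\Gamma(x)$, this value of the inner product is the same for every $z$ in any one of these three sets.

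Next I would use bilinearity of $\langle\,,\,\rangle$ together with Definition \ref{adef} to write, for instance,
\begin{equation*}
    \Bigl<A_{xy}^{+},\widehat{x}\Bigr> = \sum_{z\in\mathcal{A}_{xy}^{+}}\bigl<\widehat{z},\widehat{x}\bigr> = \bigl\vert\mathcal{A}_{xy}^{+}\bigr\vert\Bigl([n][k-1]-[k]^2\Bigr),
\end{equation*}
and similarly for $A_{xy}^{0}$ and $A_{xy}^{-}$. Finally I would substitute the cardinalities $\bigl\vert\mathcal{A}_{xy}^{+}\bigr\vert = q^{i+1}[i][n-k-i]$, $\bigl\vert\mathcal{A}_{xy}^{0}\bigr\vert = (q-1)[i]^2$, and $\bigl\vert\mathcal{A}_{xy}^{-}\bigr\vert = q^{i+1}[i][k-i]$ from Lemma \ref{acount} to obtain the three displayed formulas (\ref{apx})--(\ref{amx}).

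There is essentially no obstacle here: the argument is a one-line application of bilinearity plus the constancy of $\langle\widehat{x},\widehat{z}\rangle$ on $\Gamma(x)$ and the cardinality count already recorded in Lemma \ref{acount}. The only thing to be careful about is citing Lemma \ref{introlem1}(iii) with $i=1$ (distance between $x$ and any neighbour $z$), rather than with $i=\partial(x,y)$; the index $i=\partial(x,y)$ enters the final answer only through the cardinalities.
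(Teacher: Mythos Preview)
Your proposal is correct and matches the paper's own proof essentially line for line: expand $A_{xy}^{\bullet}$ as a sum over $z$, note each $z\in\Gamma(x)$ so $\bigl<\widehat z,\widehat x\bigr>=[n][k-1]-[k]^2$ by Lemma~\ref{introlem1}(iii), and multiply by the cardinalities from Lemma~\ref{acount}. Your remark about the two different roles of the symbol $i$ is apt and is the only place one could slip.
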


\begin{proof}
    We first prove (\ref{apx}). Using the first equation in (\ref{adef2}), we obtain
    \begin{equation}
    \label{apxyx}
        \Bigl<A_{xy}^{+},\widehat{x}\Bigr>=\sum_{z\in \mathcal{A}_{xy}^{+}} \bigl<\widehat{z},\widehat{x}\bigr>.
    \end{equation}

    Pick $z\in \mathcal{A}_{xy}^{+}$. By the definition of $\mathcal{A}_{xy}^{+}$ and Lemma \ref{introlem1}(iii),
    \begin{equation}
    \label{zxinner}
    \bigl<\widehat{z},\widehat{x}\bigr>=[n][k-1]-[k]^2.
    \end{equation}

    By the above comments, 
    \begin{equation}
    \label{apxyx2}
        \Bigl< A_{xy}^{+}, \widehat{x} \Bigr> = \bigl\vert\mathcal{A}_{xy}^{+}\bigr\vert \Bigl( [n][k-1] - [k]^2\Bigr).
    \end{equation}
    In (\ref{apxyx2}), we evaluate $\bigl\vert \mathcal{A}_{xy}^{+}\bigr\vert$ using (\ref{acount2}); this yields (\ref{apx}).

    We have now verified (\ref{apx}). Equations (\ref{a0x}) and (\ref{amx}) are obtained in a similar fashion.
\end{proof}

\begin{lemma}
\label{ay}
    For $x,y\in X$ such that $1<\partial(x,y)<k$, we have
    \begin{align}
        \Bigl<A_{xy}^{+},\widehat{y}\Bigr>&= q^{i+1}[i][n-k-i]\Bigl([n][k-i]-[k]^2\Bigr),\label{apy}\\
        \Bigl<A_{xy}^{0},\widehat{y}\Bigr>&= (q-1)[i]^2\Bigl([n][k-i]-[k]^2\Bigr),\label{a0y}\\
        \Bigl<A_{xy}^{-},\widehat{y}\Bigr>&= q^{i+1}[i][k-i]\Bigl([n][k-i]-[k]^2\Bigr),\label{amy}
    \end{align}
    where $i=\partial(x,y)$.
\end{lemma}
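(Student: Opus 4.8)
The plan is to mimic the proof of Lemma~\ref{ax} almost verbatim, replacing $\widehat{x}$ by $\widehat{y}$ throughout. First I would use the first equation in (\ref{adef2}) to write $\bigl<A_{xy}^{+},\widehat{y}\bigr>=\sum_{z\in \mathcal{A}_{xy}^{+}}\bigl<\widehat{z},\widehat{y}\bigr>$, and similarly for $A_{xy}^{0}$ and $A_{xy}^{-}$. The key point is that for each $z$ in one of the three sets, the inner product $\bigl<\widehat{z},\widehat{y}\bigr>$ depends only on $\partial(z,y)$, by Lemma~\ref{introlem1}(iii). Since every $z\in \mathcal{A}_{xy}$ satisfies $\partial(z,y)=\partial(x,y)=i$ by the definition of $\mathcal{A}_{xy}$, and $\mathcal{A}_{xy}^{+},\mathcal{A}_{xy}^{0},\mathcal{A}_{xy}^{-}$ are subsets of $\mathcal{A}_{xy}$ by Lemma~\ref{acomplete}, we get $\bigl<\widehat{z},\widehat{y}\bigr>=[n][k-i]-[k]^2$ for every such $z$.

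Next I would factor this constant out of each sum, obtaining
\begin{equation*}
    \bigl<A_{xy}^{+},\widehat{y}\bigr>=\bigl\vert\mathcal{A}_{xy}^{+}\bigr\vert\Bigl([n][k-i]-[k]^2\Bigr),
\end{equation*}
and the analogous identities for $A_{xy}^{0}$ and $A_{xy}^{-}$. Finally I would substitute the cardinalities $\bigl\vert\mathcal{A}_{xy}^{+}\bigr\vert=q^{i+1}[i][n-k-i]$, $\bigl\vert\mathcal{A}_{xy}^{0}\bigr\vert=(q-1)[i]^2$, $\bigl\vert\mathcal{A}_{xy}^{-}\bigr\vert=q^{i+1}[i][k-i]$ from (\ref{acount2}), which yields (\ref{apy}), (\ref{a0y}), (\ref{amy}) respectively.

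There is essentially no obstacle here: the only subtlety, and the one place worth a sentence of care, is confirming that $\partial(z,y)=i$ for all $z$ in each of the three subsets — this is exactly where the definition of $\mathcal{A}_{xy}$ (as opposed to $\mathcal{B}_{xy}$ or $\mathcal{C}_{xy}$) is used, and it is why the three results share the common factor $[n][k-i]-[k]^2$, in contrast to Lemma~\ref{ax} where the factor was $[n][k-1]-[k]^2$ because there $\partial(z,x)=1$. So the proof will just note that (\ref{apy}) is verified by the displayed computation, and that (\ref{a0y}) and (\ref{amy}) follow in the same way after substituting the corresponding cardinalities from (\ref{acount2}).
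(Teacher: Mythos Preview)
Your proposal is correct and follows essentially the same approach as the paper's own proof: expand each inner product as a sum over the relevant orbit, use Lemma~\ref{introlem1}(iii) together with $\partial(z,y)=i$ to evaluate $\bigl<\widehat{z},\widehat{y}\bigr>=[n][k-i]-[k]^2$, factor out this constant, and substitute the cardinalities from (\ref{acount2}). Your extra sentence explaining why $\partial(z,y)=i$ (via membership in $\mathcal{A}_{xy}$) is a helpful clarification but otherwise the argument matches the paper exactly.
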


\begin{proof}
    We first prove (\ref{apy}). Using the first equation in (\ref{adef2}), we obtain
    \begin{equation}
    \label{apxyy}
        \Bigl<A_{xy}^{+},\widehat{y}\Bigr>=\sum_{z\in \mathcal{A}_{xy}^{+}} \bigl<\widehat{z},\widehat{y}\bigr>.
    \end{equation}

    Pick $z\in \mathcal{A}_{xy}^{+}$. By the definition of $\mathcal{A}_{xy}^{+}$ and Lemma \ref{introlem1}(iii),
    \begin{equation}
    \label{zyinner}
    \bigl<\widehat{z},\widehat{y}\bigr>=[n][k-i]-[k]^2.
    \end{equation}

    By the above comments, 
    \begin{equation}
    \label{apxyy2}
        \Bigl<A_{xy}^{+}, \widehat{y} \Bigr> = \bigl\vert \mathcal{A}_{xy}^{+}\bigr\vert \Bigl( [n][k-i] - [k]^2\Bigr).
    \end{equation}
    In (\ref{apxyy2}), we evaluate $\bigl\vert \mathcal{A}_{xy}^{+}\bigr\vert$ using (\ref{acount2}); this yields (\ref{apy}).
    
    We have now verified (\ref{apy}). Equations (\ref{a0y}) and (\ref{amy}) are obtained in a similar fashion.
\end{proof}

\begin{lemma}
\label{axny}
    For $x,y\in X$ such that $1<\partial(x,y)<k$, we have
    \begin{align}
        \Bigl<A_{xy}^{+},\widehat{x\cap y}\Bigr>&= q^{k+i+1}[i][n-k-i][k-i][n-k],\label{apxny}\\
        \Bigl<A_{xy}^{0},\widehat{x\cap y}\Bigr>&= q^k(q-1)[i]^2[k-i][n-k],\label{a0xny}\\
        \Bigl<A_{xy}^{-},\widehat{x\cap y}\Bigr>&= q^{i+1}[i][k-i]\Bigl([n][k-i-1]-[k-i][k]\Bigr),\label{amxny}
    \end{align}
    where $i=\partial(x,y)$.
\end{lemma}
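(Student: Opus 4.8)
The plan is to imitate the proofs of Lemmas \ref{ax} and \ref{ay}. Using the definitions in (\ref{adef2}), expand each inner product as a sum:
\begin{equation*}
    \Bigl<A_{xy}^{+},\widehat{x\cap y}\Bigr>=\sum_{z\in \mathcal{A}_{xy}^{+}}\bigl<\widehat{z},\widehat{x\cap y}\bigr>,
\end{equation*}
and similarly for $A_{xy}^{0}$ and $A_{xy}^{-}$. The key input is Lemma \ref{introlem1}(i), which gives $\bigl<\widehat{z},\widehat{x\cap y}\bigr>=[n][h]-[k][k-i]$, where $h=\dim\bigl(z\cap (x\cap y)\bigr)$ and $i=\partial(x,y)$. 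So the task reduces to determining $\dim\bigl(z\cap x\cap y\bigr)$ for $z$ in each of the three orbits.

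For $z\in \mathcal{A}_{xy}^{+}$ and for $z\in \mathcal{A}_{xy}^{0}$ the defining condition $z\cap x\cap y=x\cap y$ gives immediately $h=\dim(x\cap y)=k-i$, so the summand $\bigl<\widehat{z},\widehat{x\cap y}\bigr>$ is the constant $[n][k-i]-[k][k-i]=[k-i]\bigl([n]-[k]\bigr)$; multiplying by $\bigl\vert\mathcal{A}_{xy}^{+}\bigr\vert$ and $\bigl\vert\mathcal{A}_{xy}^{0}\bigr\vert$ from (\ref{acount2}) and simplifying with the identity $[n]-[k]=q^k[n-k]$ should yield (\ref{apxny}) and (\ref{a0xny}). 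The genuinely new case is $z\in \mathcal{A}_{xy}^{-}$: here $z\cap x\cap y\subsetneq x\cap y$, and I expect the right statement to be $\dim\bigl(z\cap x\cap y\bigr)=k-i-1$, i.e. the containment drops dimension by exactly one. This should follow from linear algebra much as in Lemma \ref{6cor}: since $z\in\Gamma(x)$ we have $\dim(z\cap x)=k-1$, and since $z\in\mathcal{A}_{xy}^{-}$ with $z+x+y=x+y$ one checks $z\cap x\cap y=(z\cap x)\cap y$ has codimension exactly one in $x\cap y$ inside $z\cap x$; the strictness in the definition of $\mathcal{A}_{xy}^{-}$ forbids a larger drop. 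Granting this, $\bigl<\widehat{z},\widehat{x\cap y}\bigr>=[n][k-i-1]-[k][k-i]$ is again constant over $\mathcal{A}_{xy}^{-}$, and multiplying by $\bigl\vert\mathcal{A}_{xy}^{-}\bigr\vert=q^{i+1}[i][k-i]$ from (\ref{acount2}) gives (\ref{amxny}) directly.

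The main obstacle is the dimension count $\dim\bigl(z\cap x\cap y\bigr)=k-i-1$ for $z\in\mathcal{A}_{xy}^{-}$: one must verify that the strict inclusion in the definition of $\mathcal{A}_{xy}^{-}$ cannot be a drop of two or more. The quickest route is a dimension balance: from $z+x+y=x+y$ and modularity (Lemma \ref{modularity}) applied a couple of times, together with $\dim z=k$, $\dim(z\cap x)=k-1$, $\dim(x\cap y)=k-i$, and $\dim(x+y)=k+i$, one pins down $\dim\bigl((z+x)\cap y\bigr)$ and then $\dim\bigl(z\cap x\cap y\bigr)=\dim(z\cap x)+\dim y-\dim\bigl((z\cap x)+y\bigr)$; the constraints force the value $k-i-1$. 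Once this is in hand, all three formulas follow by the same routine substitution-and-simplify steps used in Lemmas \ref{ax} and \ref{ay}, so I would state the dimension count as a short preliminary observation (or fold it into the proof) and then present the three computations in parallel, writing out the $\mathcal{A}_{xy}^{+}$ case in full and remarking that the other two are analogous.
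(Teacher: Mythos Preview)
Your proposal is correct and matches the paper's proof essentially line for line: expand each inner product as a sum over the orbit, use Lemma \ref{introlem1}(i) with the appropriate value of $h=\dim(z\cap x\cap y)$, and multiply by the orbit size from (\ref{acount2}). The dimension count for $\mathcal{A}_{xy}^{-}$ is simpler than you fear---the paper dispatches it in one line via Lemma \ref{modularity}: applying modularity to the subspaces $z\cap x$ and $x\cap y$ of $x$ (with $\dim(z\cap x)=k-1$, $\dim(x\cap y)=k-i$, and $(z\cap x)+(x\cap y)\subseteq x$) gives $\dim(z\cap x\cap y)\geq k-i-1$, and the strict containment in the definition of $\mathcal{A}_{xy}^{-}$ forces equality.
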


\begin{proof}
    We first prove (\ref{apxny}). Using the first equation in (\ref{adef2}), we obtain
    \begin{equation}
    \label{apxyn}
        \Bigl<A_{xy}^{+},\widehat{x\cap y}\Bigr>=\sum_{z\in \mathcal{A}_{xy}^{+}} \Bigl<\widehat{z},\widehat{x\cap y}\Bigr>.
    \end{equation}

    Pick $z\in \mathcal{A}_{xy}^{+}$. By the definition of $\mathcal{A}_{xy}^{+}$ and Lemma \ref{introlem1}(i),
    \begin{equation}
    \label{z+xnyinner}
    \Bigl<\widehat{z},\widehat{x\cap y}\Bigr>=[n][k-i]-[k][k-i]=q^{k}[k-i][n-k].
    \end{equation}

    By the above comments, 
    \begin{equation}
    \label{apxyn2}
        \Bigl<A_{xy}^{+}, \widehat{x\cap y} \Bigr> = \bigl\vert \mathcal{A}_{xy}^{+}\bigr\vert\; q^k[k-i][n-k].
    \end{equation}
    In (\ref{apxyn2}), we evaluate $\bigl\vert \mathcal{A}_{xy}^{+}\bigr\vert$ using (\ref{acount2}); this yields (\ref{apxny}).

    We have now verified (\ref{apxny}). Equation (\ref{a0xny}) is obtained in a similar fashion.

    Next we prove (\ref{amxny}). Using the last equation in (\ref{adef2}), we obtain
    \begin{equation}
    \label{amxyn}
        \Bigl<A_{xy}^{-},\widehat{x\cap y}\Bigr>=\sum_{z\in \mathcal{A}_{xy}^{-}} \Bigl<\widehat{z},\widehat{x\cap y}\Bigr>.
    \end{equation}

    Pick $z\in \mathcal{A}_{xy}^{-}$. By the definition of $\mathcal{A}_{xy}^{-}$ and Lemma \ref{modularity},
    \begin{equation}
    \label{zxydim}
        \dim \left(z\cap x\cap y\right)=k-i-1.
    \end{equation}
    
    By (\ref{zxydim}) and Lemma \ref{introlem1}(i),
    \begin{equation}
    \label{z-xnyinner}
    \Bigl<\widehat{z},\widehat{x\cap y}\Bigr>=[n][k-i-1]-[k-i][k].
    \end{equation}

    By the above comments, 
    \begin{equation}
    \label{amxyn2}
        \Bigl< A_{xy}^{-}, \widehat{x\cap y} \Bigr> = \bigl\vert \mathcal{A}_{xy}^{-}\bigr\vert \Bigl( [n][k-i-1]-[k-i][k]\Bigr).
    \end{equation}
    In (\ref{amxyn2}), we evaluate $\left\vert \mathcal{A}_{xy}^{-}\right\vert$ using (\ref{acount2}); this yields (\ref{amxny}).
\end{proof}

\begin{lemma}
\label{ax+y}
    For $x,y\in X$ such that $1<\partial(x,y)<k$, we have
    \begin{align}
        \Bigl<A_{xy}^{+},\widehat{x+y}\Bigr>&= q^{i+1}[i][n-k-i]\Bigl([n][k-1]-[k][k+i]\Bigr),\label{apx+y}\\
        \Bigl<A_{xy}^{0},\widehat{x+y}\Bigr>&= q^{k+i}(q-1)[i]^2[k][n-k-i],\label{a0x+y}\\
        \Bigl<A_{xy}^{-},\widehat{x+y}\Bigr>&= q^{k+2i+1}[i][k-i][k][n-k-i],\label{amx+y}
    \end{align}
    where $i=\partial(x,y)$.
\end{lemma}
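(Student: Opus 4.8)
The plan is to imitate the proofs of Lemmas~\ref{ax}, \ref{ay}, and \ref{axny}: expand each of $A_{xy}^{+}, A_{xy}^{0}, A_{xy}^{-}$ as a sum of vectors $\widehat{z}$ over the corresponding orbit, observe that $\bigl<\widehat{z},\widehat{x+y}\bigr>$ is constant as $z$ ranges over that orbit, and then multiply by the orbit size recorded in Lemma~\ref{acount}. Concretely, from (\ref{adef2}) we have $\bigl<A_{xy}^{+},\widehat{x+y}\bigr>=\sum_{z\in\mathcal{A}_{xy}^{+}}\bigl<\widehat{z},\widehat{x+y}\bigr>$, and similarly for $A_{xy}^{0}$ and $A_{xy}^{-}$. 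Since $\dim z=k$ for $z\in\Gamma(x)$ and $\dim(x+y)=k+i$ by Lemma~\ref{introlem0}(ii), Lemma~\ref{introlem1}(i) gives $\bigl<\widehat{z},\widehat{x+y}\bigr>=[n][h]-[k][k+i]$, where $h=\dim\bigl(z\cap(x+y)\bigr)$. So the whole computation reduces to finding $h$ on each of the three orbits.

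For $z\in\mathcal{A}_{xy}^{0}$ or $z\in\mathcal{A}_{xy}^{-}$, the defining condition $z+x+y=x+y$ forces $z\subseteq x+y$, hence $h=\dim z=k$ and $\bigl<\widehat{z},\widehat{x+y}\bigr>=[n][k]-[k][k+i]$. For $z\in\mathcal{A}_{xy}^{+}$, the condition $z+x+y\supsetneq x+y$ gives $z\not\subseteq x+y$, so $h<k$; on the other hand $z\cap x\subseteq z\cap(x+y)$ and $\dim(z\cap x)=k-1$ because $z\in\Gamma(x)$, so $h\geq k-1$. Therefore $h=k-1$ and $\bigl<\widehat{z},\widehat{x+y}\bigr>=[n][k-1]-[k][k+i]$.

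Substituting the cardinalities $\bigl\vert\mathcal{A}_{xy}^{+}\bigr\vert=q^{i+1}[i][n-k-i]$, $\bigl\vert\mathcal{A}_{xy}^{0}\bigr\vert=(q-1)[i]^2$, and $\bigl\vert\mathcal{A}_{xy}^{-}\bigr\vert=q^{i+1}[i][k-i]$ from Lemma~\ref{acount} then yields (\ref{apx+y}) directly, and gives (\ref{a0x+y}) and (\ref{amx+y}) after rewriting the common factor via the elementary identity $[n][k]-[k][k+i]=[k]\bigl([n]-[k+i]\bigr)=q^{k+i}[k][n-k-i]$, which in turn follows from $q^{n}-q^{k+i}=q^{k+i}\bigl(q^{n-k-i}-1\bigr)$.

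The only step carrying genuine content is the dimension count $h=k-1$ on the orbit $\mathcal{A}_{xy}^{+}$; everything else is bookkeeping with Lemma~\ref{introlem1}(i), Lemma~\ref{acount}, and the displayed $q$-identity, so I do not anticipate a real obstacle.
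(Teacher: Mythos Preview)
Your proposal is correct and follows essentially the same approach as the paper: expand each $A_{xy}^{\bullet}$ as a sum over its orbit, compute $\dim\bigl(z\cap(x+y)\bigr)$ for a representative $z$, apply Lemma~\ref{introlem1}(i), and multiply by the orbit size from Lemma~\ref{acount}. The only cosmetic difference is that the paper obtains $\dim\bigl(z\cap(x+y)\bigr)=k-1$ for $z\in\mathcal{A}_{xy}^{+}$ by invoking Lemma~\ref{modularity}, whereas you squeeze it between $k-1$ and $k$ directly; both arguments are equivalent.
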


\begin{proof}
We first prove (\ref{apx+y}). Using the first equation in (\ref{adef2}), we obtain
    \begin{equation}
    \label{apxyt}
        \Bigl<A_{xy}^{+},\widehat{x+y}\Bigr>=\sum_{z\in \mathcal{A}_{xy}^{+}} \Bigl<\widehat{z},\widehat{x+y}\Bigr>.
    \end{equation}

    Pick $z\in \mathcal{A}_{xy}^{+}$. By the definition of $\mathcal{A}_{xy}^{+}$ and Lemma \ref{modularity},
    \begin{equation}
    \label{zx+ydim}
        \dim \, \bigl(z\cap (x+y)\bigr)=k-1.
    \end{equation}
    
    By (\ref{zx+ydim}) and Lemma \ref{introlem1}(i),
    \begin{equation}
    \label{z+x+yinner}
    \Bigl<\widehat{z},\widehat{x+y}\Bigr>=[n][k-1]-[k][k+i].
    \end{equation}

    By the above comments, 
    \begin{equation}
    \label{apxyt2}
        \Bigl< A_{xy}^{+}, \widehat{x+y} \Bigr> = \bigl\vert \mathcal{A}_{xy}^{+}\bigr\vert \Bigl([n][k-1]-[k][k+i]\Bigr).
    \end{equation}
    In (\ref{apxyt2}), we evaluate $\bigl\vert \mathcal{A}_{xy}^{+}\bigr\vert$ using (\ref{acount2}); this yields (\ref{apx+y}).  

    Next we prove (\ref{a0x+y}). Using the second equation in (\ref{adef2}), we obtain
    \begin{equation}
    \label{a0xyt}
        \Bigl<A_{xy}^{0},\widehat{x+y}\Bigr>= \sum_{z\in \mathcal{A}_{xy}^{0}} \Bigl<\widehat{z},\widehat{x+y}\Bigr>.
    \end{equation}

    Pick $z\in \mathcal{A}_{xy}^{0}$. By the definition of $\mathcal{A}_{xy}^{0}$, 
    \begin{equation}
    \label{z0x+ydim}
        z\cap (x+y)=z.
    \end{equation}
    
    By (\ref{z0x+ydim}) and Lemma \ref{introlem1}(i),
    \begin{equation}
    \label{z0x+yinner}
    \Bigl<\widehat{z},\widehat{x\cap y}\Bigr>=[n][k]-[k][k+i]=q^{k+i}[k][n-k-i].
    \end{equation}

    By the above comments, 
    \begin{equation}
    \label{a0xyt2}
        \Bigl< A_{xy}^{0}, \widehat{x+y} \Bigr> = \bigl\vert \mathcal{A}_{xy}^{0}\bigr\vert\; q^{k+i}[k][n-k-i].
    \end{equation}
    In (\ref{a0xyt2}), we evaluate $\bigl\vert \mathcal{A}_{xy}^{0}\bigr\vert$ using (\ref{acount2}); this yields (\ref{a0x+y}).

    We have now verified (\ref{a0x+y}). Equation (\ref{amx+y}) is obtained in a similar fashion.
\end{proof}

\begin{theorem}
\label{aprodmat21}
Let $x,y\in X$ satisfy $1<\partial(x,y)<k$. In the following table, for each vector $u$ in the header column, and each vector $v$ in the header row, the $(u,v)$-entry of the table gives the inner product $\left<u,v\right>$. Write $i=\partial(x,y)$.

\begin{center}
    \begin{tabular}{c|c c c}
        $\left<\;,\;\right>$ & $A_{xy}^{+}$ & $A_{xy}^{0}$ & $A_{xy}^{-}$\\ 
         \hline
         \\
         $\widehat{x}$ & $\substack{q^{i+1}[i][n-k-i]\left([n][k-1]-[k]^2\right)}$ & $\substack{(q-1)[i]^2\left([n][k-1]-[k]^2\right)}$ & $\substack{q^{i+1}[i][k-i]\left([n][k-1]-[k]^2\right)}$\\
         \\
         $\widehat{y}$ & $\substack{q^{i+1}[i][n-k-i]\left([n][k-i]-[k]^2\right)}$ & $\substack{(q-1)[i]^2\left([n][k-i]-[k]^2\right)}$ & $\substack{q^{i+1}[i][k-i]\left([n][k-i]-[k]^2\right)}$\\
         \\
         $\widehat{x\cap y}$ & $\substack{q^{k+i+1}[i][n-k-i][k-i][n-k]}$ & $\substack{q^k(q-1)[i]^2[k-i][n-k]}$ & $\substack{q^{i+1}[i][k-i]\left([n][k-i-1]-[k-i][k]\right)}$\\
         \\
         $\widehat{x+y}$ & $\substack{q^{i+1}[i][n-k-i]\left([n][k-1]-[k][k+i]\right)}$ & $\substack{q^{k+i}(q-1)[i]^2[k][n-k-i]}$ & $\substack{q^{k+2i+1}[i][k-i][k][n-k-i]}$\\
    \end{tabular}
\end{center}
\end{theorem}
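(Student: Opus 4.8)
The plan is to read the twelve table entries directly off the four preceding lemmas. The rows of the table are indexed by the geometric basis vectors $\widehat{x}$, $\widehat{y}$, $\widehat{x\cap y}$, $\widehat{x+y}$, and the columns by $A_{xy}^{+}$, $A_{xy}^{0}$, $A_{xy}^{-}$. Lemma \ref{ax} supplies the $\widehat{x}$-row, Lemma \ref{ay} the $\widehat{y}$-row, Lemma \ref{axny} the $\widehat{x\cap y}$-row, and Lemma \ref{ax+y} the $\widehat{x+y}$-row. So once those four lemmas are in hand the theorem is immediate; the proof is essentially one line, ``Combine Lemmas \ref{ax}, \ref{ay}, \ref{axny}, \ref{ax+y}.''

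It is worth recording the common mechanism that drives those four lemmas, since that is where the actual work sits. Each of $A_{xy}^{+}$, $A_{xy}^{0}$, $A_{xy}^{-}$ is a sum $\sum_{z}\widehat{z}$ over one of the orbits $\mathcal{A}_{xy}^{+}$, $\mathcal{A}_{xy}^{0}$, $\mathcal{A}_{xy}^{-}$, so for any $w\in P$ the inner product of the corresponding $A$-vector with $\widehat{w}$ equals $\sum_{z}\langle \widehat{z},\widehat{w}\rangle$. By Lemma \ref{introlem1}(i) the summand $\langle\widehat{z},\widehat{w}\rangle$ depends only on $\dim z$, $\dim w$, and $\dim(z\cap w)$; since $z$ ranges over a single orbit of $\Stab(x,y)$ on $\Gamma(x)$ and $w\in\{x,y,x\cap y,x+y\}$, the quantity $\dim(z\cap w)$ is constant on the orbit, hence $\langle\widehat{z},\widehat{w}\rangle$ is constant there. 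Thus the inner product is just this constant times the orbit size, and the orbit sizes are exactly the ones recorded in Lemma \ref{acount}.

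So the only genuinely substantive step — already carried out inside Lemmas \ref{axny} and \ref{ax+y} — is the determination, for each orbit and each $w$, of $\dim(z\cap w)$. For $w=x$ this is $k-1$ for every $z\in\Gamma(x)$, and for $w=y$ it is $k-i$ for every $z\in\mathcal{A}_{xy}$ by Lemma \ref{introlem0}(i); these two cases are orbit-independent, which is why the $\widehat{x}$-row and $\widehat{y}$-row have the uniform shape ``(orbit size)$\cdot([n][k-1]-[k]^2)$'' and ``(orbit size)$\cdot([n][k-i]-[k]^2)$''. The interesting cases are $w=x\cap y$ and $w=x+y$, where the answer genuinely distinguishes the three orbits: unwinding the defining conditions of Definition \ref{caladef} (whether $z+x+y$ equals or strictly contains $x+y$, and whether $z\cap x\cap y$ equals or is strictly contained in $x\cap y$) together with the modularity identity of Lemma \ref{modularity} pins down $\dim(z\cap x\cap y)$ and $\dim\bigl(z\cap(x+y)\bigr)$ in each of the three cases. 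I expect this short case analysis to be the main obstacle, but it is routine linear algebra; since it is exactly the content of the cited lemmas, nothing beyond quoting them is required to finish.
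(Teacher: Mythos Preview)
Your proposal is correct and matches the paper's own proof exactly: the paper's argument is the single line ``Combine Lemmas \ref{ax}--\ref{ax+y}.'' Your additional discussion of the underlying mechanism (constancy of $\dim(z\cap w)$ on each orbit, so that each inner product is orbit size times a fixed value from Lemma \ref{introlem1}) accurately summarizes how those four lemmas were obtained.
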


\begin{proof}
    Combine Lemmas \ref{ax}--\ref{ax+y}.
\end{proof}

In the next result, we write $A_{xy}^{+},A_{xy}^{0},A_{xy}^{-}$ in terms of the geometric basis for $\Fix(x,y)$.
\begin{theorem}
\label{alin}
For $x,y\in X$ such that $1<\partial(x,y)<k$, we have
    \begin{align}
    A_{xy}^{+}&=q^{i+1}[n-k-i][i-1]\widehat{x}+q^{2i}[n-k-i]\widehat{x\cap y}-[i]\widehat{x+y},\label{aplin}\\
    A_{xy}^{0}&=\bigl(q^i[i-1]-[i]\bigr)\widehat{x}-q^{i-1}\widehat{y}+q^{2i-1}\widehat{x\cap y}+q^{i-1}\widehat{x+y},\label{a0lin}\\
    A_{xy}^{-}&=q^{i+1}[k-i][i-1]\widehat{x}-q^i[i]\widehat{x\cap y}+q^{i}[k-i]\widehat{x+y},\label{amlin}
    \end{align}
where $i=\partial(x,y)$.
\end{theorem}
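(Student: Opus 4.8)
Each of $A_{xy}^{+},A_{xy}^{0},A_{xy}^{-}$ lies in $\Fix(x,y)$, and by Lemma \ref{1basis} (together with the running assumption $1<\partial(x,y)<k$) the geometric basis $\widehat{x},\widehat{y},\widehat{x\cap y},\widehat{x+y}$ spans $\Fix(x,y)$. Hence each vector $A_{xy}^{\bullet}$ has a \emph{unique} expansion
$A_{xy}^{\bullet}=\alpha\,\widehat{x}+\beta\,\widehat{y}+\gamma\,\widehat{x\cap y}+\delta\,\widehat{x+y}$,
and the plan is simply to pin down $(\alpha,\beta,\gamma,\delta)$ by taking inner products against the four geometric basis vectors. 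Taking $\langle\,\cdot\,,\widehat{x}\rangle,\langle\,\cdot\,,\widehat{y}\rangle,\langle\,\cdot\,,\widehat{x\cap y}\rangle,\langle\,\cdot\,,\widehat{x+y}\rangle$ of both sides and recalling that $M_i$ is the Gram matrix of the geometric basis (Lemma \ref{innprodmat21}), one gets the linear system whose right-hand side is exactly the relevant column of the table in Theorem \ref{aprodmat21}. Since $M_i$ is invertible with inverse given explicitly in Lemma \ref{inverse}, the coefficient vector $(\alpha,\beta,\gamma,\delta)^{\mathsf T}$ is obtained by multiplying $M_i^{-1}$ into that column.

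So the first step is to write out, for each of the three vectors, the product of the $4\times4$ matrix $M_i^{-1}$ from Lemma \ref{inverse} with the corresponding length-four column of inner products from Theorem \ref{aprodmat21}. The scalar prefactor $\frac{1}{q^{k-i}(q-1)[i]^2[n]}$ of $M_i^{-1}$ must cancel against the common factors appearing in the inner product entries; a quick sanity check is that every entry of the relevant column indeed carries a matching factor of $[i]$ (and, after combining, of $[n]$, $(q-1)$, and a power of $q$), so that the final coefficients are polynomials in $q$ of the advertised shape, e.g. $q^{i+1}[n-k-i][i-1]$ for the $\widehat{x}$-coefficient of $A_{xy}^{+}$. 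The second step is to carry out and simplify the four dot products for each vector; this is where all the work is.

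The main obstacle is therefore the $q$-analogue bookkeeping in those simplifications: one repeatedly needs identities such as $[a+b]=[a]+q^{a}[b]$, $[n]=[k]+q^{k}[n-k]$, $[n][k-1]-[k]^2=q^{k}\bigl([k-1][n-k]-[k]\bigr)$ and the like, to collapse expressions like $[n][k-1]-[k][k+i]$ or $[n][k-i-1]-[k-i][k]$ into products of single bracket symbols times powers of $q$. I would organize this by first reducing each of the four inner product entries of a given column to the form $q^{\,?}\,[i]\cdot(\text{something})$, then performing the (at most four-term) linear combination dictated by a fixed row of $M_i^{-1}$, and checking term-by-term that the non-$\widehat{x}$ rows collapse as stated (for instance, that the $\widehat{y}$-coefficient of $A_{xy}^{+}$ and of $A_{xy}^{-}$ vanishes). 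As a consistency check at the end, I would verify the identity $A_{xy}=A_{xy}^{+}+A_{xy}^{0}+A_{xy}^{-}$ (Lemma \ref{acomplete}) against the known expansion of $A_{xy}$, and also confirm that summing (\ref{aplin})–(\ref{amlin}) reproduces whatever expansion of $A_{xy}$ in the geometric basis is available.

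An equivalent route, which avoids inverting $M_i$, is to \emph{define} the right-hand sides of (\ref{aplin})–(\ref{amlin}) as candidate vectors in $\Fix(x,y)$ and check directly, using Lemma \ref{innprodmat21}, that their inner products with $\widehat{x},\widehat{y},\widehat{x\cap y},\widehat{x+y}$ agree with the entries of Theorem \ref{aprodmat21}; since the form restricted to $\Fix(x,y)$ is nondegenerate (again because $M_i$ is invertible) and $A_{xy}^{\bullet}\in\Fix(x,y)$, matching all four inner products forces equality. This version pushes the same $q$-identities through in a slightly different order but may be cleaner to typeset, since it only uses $M_i$ and not $M_i^{-1}$.
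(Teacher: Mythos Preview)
Your proposal is correct and follows essentially the same route as the paper: write each $A_{xy}^{\bullet}$ in the geometric basis, take inner products against $\widehat{x},\widehat{y},\widehat{x\cap y},\widehat{x+y}$ to obtain $M_i\cdot(\text{coefficients})=N_i$ with $N_i$ the column from Theorem~\ref{aprodmat21}, and then invert using Lemma~\ref{inverse}. The paper presents exactly this computation (packaging all three vectors at once as $M_i^{-1}N_i$) without the alternative verification you sketch at the end.
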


\begin{proof}
    Write 
    \begin{align}
    A_{xy}^{+}&=\alpha\widehat{x}+\beta\widehat{y}+\gamma\widehat{x\cap y}+\delta\widehat{x+y},\label{aplinwrite}\\
    A_{xy}^{0}&=\alpha'\widehat{x}+\beta'\widehat{y}+\gamma'\widehat{x\cap y}+\delta'\widehat{x+y},\label{a0linwrite}\\
    A_{xy}^{-}&=\alpha''\widehat{x}+\beta''\widehat{y}+\gamma''\widehat{x\cap y}+\delta''\widehat{x+y},\label{amlinwrite}
    \end{align}
for $\alpha,\beta,\gamma,\delta,\alpha',\beta',\gamma',\delta',\alpha'',\beta'',\gamma'',\delta''\in \mathbb{R}$. 

Let $N_i$ denote the matrix of inner products from Theorem \ref{aprodmat21}. 

In each of (\ref{aplinwrite}), (\ref{a0linwrite}), (\ref{amlinwrite}) we take the inner product of either side with each of $\widehat{x}, \widehat{y}, \widehat{x\cap y}$, $\widehat{x+y}$ to obtain 
\begin{equation}
        \nonumber
        M_i\begin{pmatrix}
\alpha&\alpha'&\alpha''\\ \beta&\beta'&\beta''\\ \gamma&\gamma'&\gamma''\\ \delta&\delta'&\delta''
\end{pmatrix}=N_i.
\end{equation}

The matrix $M_i$ is invertible by Lemma \ref{inverse}, so

\begin{equation*}
        \begin{pmatrix}
\alpha&\alpha'&\alpha''\\ \beta&\beta'&\beta''\\ \gamma&\gamma'&\gamma''\\ \delta&\delta'&\delta''
\end{pmatrix}
=M_i^{-1}N_i.
\end{equation*}
Using Lemma \ref{inverse} and matrix multiplication we obtain
\begin{equation*}
    M_{i}^{-1}N_i=
    \begin{pmatrix}
q^{i+1}[n-k-i][i-1]& q^i[i-1]-[i] & q^{i+1}[k-i][i-1]\\
0& -q^{i-1} & 0\\
q^{2i}[n-k-i]& q^{2i-1} & -q^i [i]\\
-[i]& q^{i-1} & q^i[k-i]\\
\end{pmatrix}.
\end{equation*}
The result follows.
\end{proof}

Fix $x,y\in X$ such that $1<\partial(x,y)<k$. Our next goal is to write $A_{xy}^{+},A_{xy}^{0},A_{xy}^{-}$ in terms of the combinatorial basis for $\Fix(x,y)$. To do this, we write $\widehat{x\cap y}, \widehat{x+y}$ in terms of the combinatorial basis for $\Fix(x,y)$.

\begin{lemma}{{\rm{\cite[Theorem~11.4]{Seong}}}}
\label{maintheorem}
    For $x,y\in X$ such that $1<\partial(x,y)<k$, we have
    \begin{equation*}
        \begin{aligned}
            \widehat{x\cap y}&=\frac{[k-i][n-k-1]}{q^{k-1}[n-2k]}\widehat{x}+\frac{[k-i]}{q^{k-i+1}[i-1][n-2k]}\widehat{y}-\frac{1}{q^{k+i}[n-2k]}B_{xy}-\frac{[k-i]}{q^k[i-1][n-2k]}C_{xy},\\
            \widehat{x+y}&=-\frac{[k-1][n-k-i]}{q^{k-i-1}[n-2k]}\widehat{x}-\frac{[n-k-i]}{q^{k-2i+1}[i-1][n-2k]}\widehat{y}+\frac{1}{q^k[n-2k]}B_{xy}+\frac{[n-k-i]}{q^{k-i}[i-1][n-2k]}C_{xy},
        \end{aligned}
    \end{equation*}
    where $i=\partial(x,y)$.
\end{lemma}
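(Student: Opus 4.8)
The plan is to invert the change of basis from the geometric basis $\widehat x,\widehat y,\widehat{x\cap y},\widehat{x+y}$ to the combinatorial basis $\widehat x,\widehat y,B_{xy},C_{xy}$ of $\Fix(x,y)$. Since $\widehat x$ and $\widehat y$ belong to both bases, it is enough to express $\widehat{x\cap y}$ and $\widehat{x+y}$ in terms of $B_{xy},C_{xy}$ modulo $\mathrm{Span}\{\widehat x,\widehat y\}$. So the first step is to write
\begin{equation*}
B_{xy}=\alpha_1\widehat x+\alpha_2\widehat y+\alpha_3\widehat{x\cap y}+\alpha_4\widehat{x+y},\qquad
C_{xy}=\beta_1\widehat x+\beta_2\widehat y+\beta_3\widehat{x\cap y}+\beta_4\widehat{x+y}
\end{equation*}
for suitable scalars (these expansions exist because $B_{xy},C_{xy}\in\Fix(x,y)$, as $\mathcal B_{xy},\mathcal C_{xy}$ are $\Stab(x,y)$-orbits by Lemma \ref{bcorbit}), and the second step is to read off $\widehat{x\cap y}$ and $\widehat{x+y}$ from the inverse of the resulting matrix.

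To carry out the first step I would compute, for each geometric basis vector $v$, the inner products $\langle B_{xy},v\rangle=\sum_{z\in\mathcal B_{xy}}\langle\widehat z,v\rangle$ and $\langle C_{xy},v\rangle=\sum_{z\in\mathcal C_{xy}}\langle\widehat z,v\rangle$, exactly in the style of Lemmas \ref{ax}--\ref{ax+y}. This uses the fact that, as $z$ ranges over the orbit $\mathcal B_{xy}$ (resp.\ $\mathcal C_{xy}$), each of the dimensions $\dim(z\cap x)$, $\dim(z\cap y)$, $\dim(z\cap x\cap y)$, $\dim\!\big(z\cap(x+y)\big)$ is constant; a short linear-algebra argument shows that for $z\in\mathcal B_{xy}$ one has $\dim(z\cap x\cap y)=k-i-1$ and $z\not\subseteq x+y$, whereas for $z\in\mathcal C_{xy}$ one has $x\cap y\subseteq z\subseteq x+y$ (so $\dim(z\cap x\cap y)=k-i$). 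Feeding these dimensions into Lemma \ref{introlem1}(i) and using $|\mathcal B_{xy}|=b_i$ and $|\mathcal C_{xy}|=c_i$ from (\ref{sizebc}) gives all eight inner products in closed form, and applying $M_i^{-1}$ from Lemma \ref{inverse} then yields the scalars $\alpha_j,\beta_j$ explicitly.

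For the second step, the matrix $T$ expressing $(\widehat x,\widehat y,B_{xy},C_{xy})$ in terms of $(\widehat x,\widehat y,\widehat{x\cap y},\widehat{x+y})$ is block lower triangular, $T=\left(\begin{smallmatrix}I_2&0\\ \ast&D\end{smallmatrix}\right)$ with $D=\left(\begin{smallmatrix}\alpha_3&\alpha_4\\ \beta_3&\beta_4\end{smallmatrix}\right)$, so $T^{-1}=\left(\begin{smallmatrix}I_2&0\\ -D^{-1}\ast&D^{-1}\end{smallmatrix}\right)$, and the last two rows of $T^{-1}$ are precisely the expansions of $\widehat{x\cap y}$ and $\widehat{x+y}$ asserted in the statement. (Equivalently, this is just the inverse of the transition matrix recorded in \cite[Theorem~11.3]{Seong}, so the statement is \cite[Theorem~11.4]{Seong}.) The only genuine obstacle I anticipate is the $q$-integer bookkeeping: inverting $D$ introduces $\det D$, and one must massage the rational expressions so that the factor $[n-2k]$ appears in the denominators and everything collapses to the displayed closed forms. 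It is worth noting in passing that $[n-2k]\neq 0$ since $n>2k$ and $[i-1]\neq 0$ since $\partial(x,y)>1$, which is exactly what makes the formulas well defined.
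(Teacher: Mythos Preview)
Your proposal is sound, but note that the paper does not actually prove this lemma: it is quoted verbatim from the author's earlier work \cite[Theorem~11.4]{Seong}, with no proof given here. Your sketch---compute the expansions of $B_{xy}$ and $C_{xy}$ in the geometric basis via inner products and $M_i^{-1}$, then invert the block-triangular transition matrix---is exactly the natural route, and you yourself observe at the end that this amounts to inverting the transition matrix of \cite[Theorem~11.3]{Seong}. So there is nothing to compare; your approach is correct and is presumably what \cite{Seong} does.
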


\begin{theorem}
\label{alin2}
For $x,y\in X$ such that $1<\partial(x,y)<k$, we have
\begin{equation}
    \begin{split}
    \label{aplin2}
        A_{xy}^{+}=\;&\frac{[k-1][n-k-i][n-k]}{q^{k-i-1}[n-2k]}\widehat{x}+\frac{[k][n-k-i]}{q^{k-2i+1}[i-1][n-2k]}\widehat{y}\\
    &\qquad \qquad-\frac{[n-k]}{q^{k}[n-2k]}B_{xy}-\frac{[k][n-k-i]}{q^{k-i}[i-1][n-2k]}C_{xy},\\
    \end{split}
    \end{equation}
    \begin{equation}
    \label{a0lin2}
    A_{xy}^{0}=-
    [i]\widehat{x}-\frac{q^{i-1}[i]}{[i-1]}\widehat{y}+\frac{q^{i-1}}{[i-1]}C_{xy}, \qquad \qquad \qquad \qquad \quad \, \ \ \\
    \end{equation}
    \begin{equation}
    \label{amlin2}
    \begin{split}
        A_{xy}^{-}=\;&-\frac{[k-i][k][n-k-1]}{q^{k-i-1}[n-2k]}\widehat{x}-\frac{[k-i][n-k]}{q^{k-2i+1}[i-1][n-2k]}\widehat{y}\\
    &\qquad \qquad+\frac{[k]}{q^k [n-2k]}B_{xy}+\frac{[k-i][n-k]}{q^{k-i}[i-1][n-2k]}C_{xy},
    \end{split}
\end{equation}
where $i=\partial(x,y)$.
\end{theorem}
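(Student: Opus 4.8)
The plan is to obtain Theorem \ref{alin2} by combining the two expansions already available: Theorem \ref{alin} writes each of $A_{xy}^{+},A_{xy}^{0},A_{xy}^{-}$ as an explicit linear combination of the geometric basis $\widehat{x},\widehat{y},\widehat{x\cap y},\widehat{x+y}$, and Lemma \ref{maintheorem} rewrites $\widehat{x\cap y}$ and $\widehat{x+y}$ in terms of the combinatorial basis $\widehat{x},\widehat{y},B_{xy},C_{xy}$. Substituting the Lemma \ref{maintheorem} formulas into the Theorem \ref{alin} formulas and collecting the coefficients of $\widehat{x},\widehat{y},B_{xy},C_{xy}$ immediately produces linear combinations of the desired shape; the content of the proof is then just simplifying each coefficient to the closed form displayed in (\ref{aplin2})--(\ref{amlin2}). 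All denominators that occur, namely $[n-2k]$ and $[i-1]$, are nonzero under the standing hypotheses $n>2k$ and $1<\partial(x,y)$, so these manipulations are legitimate.

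Carrying this out, the case $A_{xy}^{0}$ is the shortest and illustrates the mechanism: starting from $A_{xy}^{0}=(q^i[i-1]-[i])\widehat{x}-q^{i-1}\widehat{y}+q^{2i-1}\widehat{x\cap y}+q^{i-1}\widehat{x+y}$, the resulting coefficient of $B_{xy}$ is $q^{2i-1}\cdot(-\tfrac{1}{q^{k+i}[n-2k]})+q^{i-1}\cdot\tfrac{1}{q^{k}[n-2k]}$, which vanishes identically, and the coefficients of $\widehat{x},\widehat{y},C_{xy}$ simplify directly to those in (\ref{a0lin2}). For $A_{xy}^{+}$ and $A_{xy}^{-}$ the bookkeeping is longer, because in Theorem \ref{alin} the multipliers of $\widehat{x\cap y}$ and $\widehat{x+y}$ are themselves products of $[\,\cdot\,]$-symbols; the simplifications use the standard identities for the $q$-analogues, such as $[a+b]=[a]+q^{a}[b]$ (and hence $[n]=[k]+q^{k}[n-k]$, $[n-k]=[n-k-i]+q^{n-k-i}[i]$, etc.), applied repeatedly.

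The one genuinely laborious step — and the only real obstacle — is verifying the $\widehat{x}$-coefficients of $A_{xy}^{+}$ and $A_{xy}^{-}$. For instance, in $A_{xy}^{+}$ this coefficient emerges as $q^{i+1}[n-k-i][i-1]+\tfrac{q^{2i-k+1}[n-k-i][k-i][n-k-1]}{[n-2k]}+\tfrac{[i][k-1][n-k-i]}{q^{k-i-1}[n-2k]}$, and one must check that after clearing $[n-2k]$ and using the identities above this collapses to $\tfrac{[k-1][n-k-i][n-k]}{q^{k-i-1}[n-2k]}$. As an independent check on the final answer one can add the three identities and compare with $A_{xy}^{+}+A_{xy}^{0}+A_{xy}^{-}=A_{xy}=\theta_{1}\widehat{x}-B_{xy}-C_{xy}$, which holds because $\Gamma(x)$ is the disjoint union of $\mathcal{B}_{xy},\mathcal{C}_{xy},\mathcal{A}_{xy}$ together with Lemma \ref{introlem1}(v); in particular the $B_{xy}$- and $C_{xy}$-coefficients must sum to $-1$, the $\widehat{y}$-coefficients to $0$, and the $\widehat{x}$-coefficients to $\theta_{1}$.
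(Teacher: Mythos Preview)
Your approach is correct and coincides with the paper's own proof: substitute the expressions for $\widehat{x\cap y}$ and $\widehat{x+y}$ from Lemma \ref{maintheorem} into the geometric-basis expansions of Theorem \ref{alin} and simplify. The paper states this in a single sentence without the detailed coefficient bookkeeping or the consistency check you outline, but the method is identical.
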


\begin{proof}
    We first prove (\ref{aplin2}). In the equation (\ref{aplin}), eliminate $\widehat{x\cap y}$ and $\widehat{x+y}$ using Lemma \ref{maintheorem} and simplify the result.

    We have now verified (\ref{aplin2}). Equations (\ref{a0lin2}) and (\ref{amlin2}) are obtained in a similar fashion.
\end{proof}

\section{Some inner products involving the $y$-partition of $\Gamma(x)$}
Pick $x,y\in X$ such that $1<\partial(x,y)<k$. In this section we calculate the inner products between the vectors $\widehat{x}, \widehat{y}, B_{xy},C_{xy},A_{xy}^{+},A_{xy}^{0},A_{xy}^{-}$. We begin by recalling some inner products from \cite{Seong}.

\begin{lemma}{{\rm{\cite[Theorem~10.9]{Seong}}}}
\label{innprodmat23}
Let $x,y\in X$ satisfy $1<\partial(x,y)<k$. In the following table, for each vector $u$ in the header column, and each vector $v$ in the header row, the $(u,v)$-entry of the table gives the inner product $\left<u,v\right>$. Write $i=\partial(x,y)$.

\begin{center}
    \begin{tabular}{c|c c c c}
        $\left<\;,\;\right>$ & $\widehat{x}$ & $\widehat{y}$ & $\widehat{x\cap y}$ & $\widehat{x+y}$\\ 
         \hline
         \\
        $\widehat{x}$ & $q^k[k][n-k]$ & $[n][k-i]-[k]^2$ & $q^k[k-i][n-k]$ & $q^{k+i}[k][n-k-i]$ \\
        \\
        $\widehat{y}$ & $[n][k-i]-[k]^2$ & $q^k[k][n-k]$ & $q^k[k-i][n-k]$ & $q^{k+i}[k][n-k-i]$ \\
        \\
        $B_{xy}$& $\substack{q^{2i+1}[k-i][n-k-i]\cdot\\ \left([n][k-1]-[k]^2\right)}$ & $\substack{q^{2i+1}[k-i][n-k-i]\cdot\\ \left([n][k-i-1]-[k]^2\right)}$ & $\substack{q^{2i+1}[k-i][n-k-i]\cdot\\ \left([n][k-i-1]-[k-i][k]\right)}$ & $\substack{q^{2i+1}[k-i][n-k-i]\cdot\\ \left([n][k-1]-[k][k+i]\right)}$ \\
        \\
        $C_{xy}$& $[i]^2\bigl([n][k-1]-[k]^2\bigr)$& $[i]^2\bigl([n][k-i+1]-[k]^2\bigr)$ & $q^k[i]^2[k-i][n-k]$ & $q^{k+i}[i]^2[k][n-k-i]$\\
    \end{tabular}
\end{center}
\end{lemma}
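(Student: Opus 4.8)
The plan is to split the table into its four rows and to notice that the genuinely new content sits only in the rows indexed by $B_{xy}$ and $C_{xy}$. The entries in the rows indexed by $\widehat{x}$ and $\widehat{y}$ are, by symmetry of the form, just the inner products $\langle\widehat{x},v\rangle$ and $\langle\widehat{y},v\rangle$ with $v$ ranging over $\widehat{x},\widehat{y},\widehat{x\cap y},\widehat{x+y}$, and these are read off verbatim from Lemma \ref{innprodmat21}. So the task reduces to computing $\langle B_{xy},\widehat{u}\rangle$ and $\langle C_{xy},\widehat{u}\rangle$ for $u\in\{x,y,x\cap y,x+y\}$.

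For this, I would expand $B_{xy}=\sum_{z\in\mathcal{B}_{xy}}\widehat{z}$ and $C_{xy}=\sum_{z\in\mathcal{C}_{xy}}\widehat{z}$, so that $\langle B_{xy},\widehat{u}\rangle=\sum_{z\in\mathcal{B}_{xy}}\langle\widehat{z},\widehat{u}\rangle$ and likewise for $C_{xy}$. By Lemma \ref{introlem1}(i), each summand equals $[n]\,[\dim(z\cap u)]-[k]\,[\dim u]$, since $\dim z=k$. The engine of the proof is the claim that for each of the four choices of $u$ the quantity $\dim(z\cap u)$ is constant as $z$ runs over the orbit $\mathcal{B}_{xy}$ (respectively $\mathcal{C}_{xy}$); granting this, each row-sum collapses to $\bigl\vert\mathcal{B}_{xy}\bigr\vert$ (respectively $\bigl\vert\mathcal{C}_{xy}\bigr\vert$) times a single scalar. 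One then substitutes $\bigl\vert\mathcal{B}_{xy}\bigr\vert=b_i=q^{2i+1}[k-i][n-k-i]$ and $\bigl\vert\mathcal{C}_{xy}\bigr\vert=c_i=[i]^2$ (from the observation following Definition \ref{calbcdef} together with (\ref{sizebc})) and simplifies, using the identity $[n]-[m]=q^{m}[n-m]$, to reach the displayed entries.

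The remaining work, and the crux, is the dimension bookkeeping. For $z\in\mathcal{B}_{xy}$ one has $\dim(z\cap x)=k-1$ from $z\in\Gamma(x)$, and $\dim(z\cap y)=k-i-1$ from Lemma \ref{introlem0} since $\partial(z,y)=i+1$; then $\dim(z\cap x\cap y)=k-i-1$, because $\dim\bigl((z\cap x)\cap(x\cap y)\bigr)\ge\dim(z\cap x)+\dim(x\cap y)-\dim x=k-i-1$ by Lemma \ref{modularity} while $z\cap x\cap y\subseteq z\cap y$ gives the opposite inequality; and $\dim(z\cap(x+y))=k-1$, because this dimension is either $k-1$ or $k$, and $z\subseteq x+y$ would give $z+y\subseteq x+y$ and hence $\dim(z\cap y)\ge 2k-(k+i)=k-i$, contradicting $\dim(z\cap y)=k-i-1$. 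For $z\in\mathcal{C}_{xy}$ one has $\dim(z\cap x)=k-1$ and $\dim(z\cap y)=k-i+1$; then $\dim(z\cap x\cap y)=k-i$, because $z\cap x$ has codimension one in $z$, so $\dim(z\cap y)\le\dim(z\cap x\cap y)+1$, while $z\cap x\cap y\subseteq x\cap y$ bounds it above by $k-i$; and finally $\dim(z\cap(x+y))=k$, since picking $v\in(z\cap y)\setminus(z\cap x)$ we get $z=(z\cap x)\oplus\mathbb{F}v\subseteq x+y$.

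I expect the main obstacle to be precisely these last few dimension identities — that $z\not\subseteq x+y$ for $z\in\mathcal{B}_{xy}$, that $z\subseteq x+y$ for $z\in\mathcal{C}_{xy}$, and the exact value of $\dim(z\cap x\cap y)$ in each case — since this is where the geometry of the $y$-partition actually enters. Each is a short application of Lemma \ref{modularity} and the fact that $z\cap x$ has codimension one in $z$; once they are in place, the rest is a mechanical substitution into $[n]\,[\dim(z\cap u)]-[k]\,[\dim u]$ followed by routine algebraic simplification.
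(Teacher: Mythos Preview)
Your proof is correct. The dimension computations for $z\cap x$, $z\cap y$, $z\cap x\cap y$, and $z\cap(x+y)$ are all sound, and the subsequent substitutions into Lemma~\ref{introlem1}(i) yield exactly the displayed entries.

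Note, however, that the paper itself does not prove this lemma: it simply cites it from the earlier work \cite[Theorem~10.9]{Seong}, so there is no in-paper argument to compare yours against. Your approach --- expand $B_{xy},C_{xy}$ as sums over the orbits, use the constancy of $\dim(z\cap u)$ on each orbit, and invoke Lemma~\ref{introlem1}(i) --- is precisely the template the paper follows in its own new computations (e.g.\ Lemmas~\ref{ax}--\ref{ax+y}), and is presumably how the cited reference proceeds as well.
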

\; \\
\begin{lemma}{{\rm{\cite[Theorem~10.15]{Seong}}}}
\label{innprodmat22}
Let $x,y\in X$ satisfy $1<\partial(x,y)<k$. In the following table, for each vector $u$ in the header column, and each vector $v$ in the header row, the $(u,v)$-entry of the table gives the inner product $\left<u,v\right>$. Write $i=\partial(x,y)$.
\begin{center}
    \begin{tabular}{c|c c c c}
        $\left<\;,\;\right>$ & $\widehat{x}$ & $\widehat{y}$ & $B_{xy}$ & $C_{xy}$\\ 
         \hline
         \\
         $\widehat{x}$ & $q^k[k][n-k]$ & $[n][k-i]-[k]^2$ & $\substack{q^{2i+1}[k-i][n-k-i]\cdot\\ \left([n][k-1]-[k]^2\right)}$ & $\substack{[i]^2\left([n][k-1]-[k]^2\right)}$\\
         \\
        $\widehat{y}$ & $[n][k-i]-[k]^2$ & $q^{k}[k][n-k]$ & $\substack{q^{2i+1}[k-i][n-k-i]\cdot\\ \left([n][k-i-1]-[k]^2\right)}$ & $\substack{[i]^2\left([n][k-i+1]-[k]^2\right)}$\\
        \\
        $B_{xy}$&$\substack{q^{2i+1}[k-i][n-k-i]\cdot\\\left([n][k-1]-[k]^2\right)}$ & $\substack{q^{2i+1}[k-i][n-k-i]\cdot\\\left([n][k-i-1]-[k]^2\right)}$ & $\substack{q^{4i+2}[k-i][n-k-i]\cdot\\ \bigl(q^{k-i-2}[n]\left([k-i]+[n-k-i]\right)+\\
    [k-i][n-k-i]\left([n][k-2]-[k]^2\right)\bigr)}$ & $\substack{q^{2i+1}[k-i][n-k-i]\cdot\\ [i]^2\left([n][k-2]-[k]^2\right)}$\\
        \\
        $C_{xy}$&$\substack{[i]^2\left([n][k-1]-[k]^2\right)}$ & $\substack{[i]^2\left([n][k-i+1]-[k]^2\right)}$ & $\substack{q^{2i+1}[k-i][n-k-i]\cdot\\ [i]^2\left([n][k-2]-[k]^2\right)}$ & $\substack{[i]^2\bigl(q^{k-2}[n]\left(2q[i-1]+q+1\right)+\\
    [i]^2\left([n][k-2]-[k]^2\right)\bigr)}$\\
    \end{tabular}
\end{center}
\end{lemma}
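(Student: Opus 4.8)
The plan is to reduce every entry of the table to a sum of inner products $\langle\widehat{z},\widehat{z'}\rangle$ with $z,z'\in X$, and then invoke Lemma \ref{introlem1}(iii),(iv): $\langle\widehat{z},\widehat{z'}\rangle=[n][k-\partial(z,z')]-[k]^2$ for $z\neq z'$, and $\langle\widehat{z},\widehat{z}\rangle=q^k[k][n-k]$ (which is also the $\partial=0$ instance of (iii), since $[n][k]-[k]^2=q^k[k][n-k]$). The $\widehat{x}$ and $\widehat{y}$ rows and columns require nothing new: the three entries among $\widehat{x},\widehat{y}$ are Lemma \ref{introlem1}(iii),(iv) with $\partial(x,y)=i$, and the entries pairing $\widehat{x}$ or $\widehat{y}$ with $B_{xy}$ or $C_{xy}$ are already recorded in Lemma \ref{innprodmat23} --- or one may re-derive them directly: every $z\in\mathcal{B}_{xy}$ has $\partial(x,z)=1$ and $\partial(y,z)=i+1$, every $z\in\mathcal{C}_{xy}$ has $\partial(x,z)=1$ and $\partial(y,z)=i-1$, while $|\mathcal{B}_{xy}|=b_i$, $|\mathcal{C}_{xy}|=c_i$, so for instance $\langle\widehat{x},B_{xy}\rangle=b_i([n][k-1]-[k]^2)$. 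Thus the real content is the bottom-right block: $\langle B_{xy},B_{xy}\rangle$, $\langle B_{xy},C_{xy}\rangle$, $\langle C_{xy},C_{xy}\rangle$.

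For each of those I would expand by bilinearity, say $\langle B_{xy},B_{xy}\rangle=\sum_{z,z'\in\mathcal{B}_{xy}}\langle\widehat{z},\widehat{z'}\rangle$. Since $z,z'\in\Gamma(x)=\Gamma_1(x)$, the triangle inequality forces $\partial(z,z')\in\{0,1,2\}$, so the summand is one of $q^k[k][n-k]$, $[n][k-1]-[k]^2$, $[n][k-2]-[k]^2$. Fix $z$ and let $m_j$ be the number of $z'$ in the relevant orbit with $\partial(z,z')=j$; since $\widehat{\cdot}$ is $GL(V)$-equivariant and $\mathcal{B}_{xy},\mathcal{C}_{xy}$ are $\Stab(x,y)$-orbits, each $m_j$ is independent of $z$. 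The $j=0$ count is $1$ if the two orbits coincide and $0$ otherwise; the $j=2$ count is the remainder; and the $j=1$ count is the corresponding adjacency structure constant of the $y$-partition. For $\mathcal{B}_{xy}$ paired with $\mathcal{C}_{xy}$ this last number is $0$ --- if $z\in\mathcal{B}_{xy}$ and $z'\in\mathcal{C}_{xy}$ were adjacent then $2=|\partial(y,z)-\partial(y,z')|\le 1$, a contradiction --- whence $\langle B_{xy},C_{xy}\rangle=b_ic_i([n][k-2]-[k]^2)$. For $\mathcal{B}_{xy}$ with itself and $\mathcal{C}_{xy}$ with itself the distance-$1$ counts are $q^{i+1}([k-i]+[n-k-i])-q-1$ and $2q[i-1]$ respectively (obtained by a direct count of the $k$-subspaces at distance $1$ from $z$ lying in each orbit, using bases adapted to the subspaces $x\cap y$, $x$, $y$ as in Lemma \ref{a0orbit}), giving $\langle B_{xy},B_{xy}\rangle=b_i\bigl(q^k[k][n-k]+m_1([n][k-1]-[k]^2)+(b_i-1-m_1)([n][k-2]-[k]^2)\bigr)$ with $m_1=q^{i+1}([k-i]+[n-k-i])-q-1$, and the analogous formula for $\langle C_{xy},C_{xy}\rangle$ with $m_1=2q[i-1]$ and $b_i$ replaced by $c_i$.

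The final step is to simplify these expressions into the closed forms in the table, using the identities $[n]=[k]+q^k[n-k]$ and $[k]-[k-2]=q^{k-2}(q+1)$; e.g.\ $q^k[k][n-k]-([n][k-2]-[k]^2)=q^{k-2}(q+1)[n]$, so the $(B_{xy},B_{xy})$ entry becomes $b_i\bigl(q^{k-2}[n](q+1+m_1)+b_i([n][k-2]-[k]^2)\bigr)$, and $q+1+m_1=q^{i+1}([k-i]+[n-k-i])$ gives the stated form; the $(C_{xy},C_{xy})$ entry goes the same way with $q+1+m_1$ replaced by $2q[i-1]+q+1$. I expect the only genuine obstacle to be the combinatorial step in the second paragraph --- verifying the two nontrivial distance-$1$ structure constants --- after which everything is routine bilinear-form and $q$-binomial bookkeeping. (As this lemma is quoted from \cite[Theorem~10.15]{Seong}, in the present paper one may alternatively just cite that source.)
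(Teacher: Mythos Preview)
The paper does not prove this lemma; it is simply quoted from \cite[Theorem~10.15]{Seong}, as you note in your final parenthetical. So there is no ``paper's own proof'' to compare against here.

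Your direct approach is sound, but be aware that it inverts the logical flow of the present paper. In Theorem~\ref{adjacency} the structure constants of the $y$-partition (in particular the $(\mathcal{B}_{xy},\mathcal{B}_{xy})$ and $(\mathcal{C}_{xy},\mathcal{C}_{xy})$ entries you need) are \emph{derived from} the inner products in Lemma~\ref{innprodmat22}, not the other way around. You avoid circularity by proposing to obtain the two adjacency counts $q^{i+1}([k-i]+[n-k-i])-q-1$ and $2q[i-1]$ via an independent subspace count, and that is indeed feasible; just make sure you actually carry it out rather than silently invoking Theorem~\ref{adjacency}. Once those two numbers are established combinatorially, your bilinear expansion plus the identity $q^k[k][n-k]-([n][k-2]-[k]^2)=q^{k-2}(q+1)[n]$ gives the three nontrivial entries exactly as you describe, and the argument that no $z\in\mathcal{B}_{xy}$ is adjacent to any $z'\in\mathcal{C}_{xy}$ is correct.
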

\; \\
\begin{theorem}
\label{innprodmat24}
Let $x,y\in X$ satisfy $1<\partial(x,y)<k$. In the following table, for each vector $u$ in the header column, and each vector $v$ in the header row, the $(u,v)$-entry of the table gives the inner product $\left<u,v\right>$. Write $i=\partial(x,y)$.
\begin{center}
    \begin{tabular}{c|c c c}
        $\left<\;,\;\right>$ & $A_{xy}^{+}$ & $A_{xy}^{0}$ & $A_{xy}^{-}$\\ 
         \hline
         \\
         $\widehat{x}$ & $\substack{q^{i+1}[i][n-k-i]\left([n][k-1]-[k]^2\right)}$ & $\substack{(q-1)[i]^2\left([n][k-1]-[k]^2\right)}$ & $\substack{q^{i+1}[i][k-i]\left([n][k-1]-[k]^2\right)}$\\
         \\
         $\widehat{y}$ & $\substack{q^{i+1}[i][n-k-i]\left([n][k-i]-[k]^2\right)}$ & $\substack{(q-1)[i]^2\left([n][k-i]-[k]^2\right)}$ & $\substack{q^{i+1}[i][k-i]\left([n][k-i]-[k]^2\right)}$\\
         \\
        $B_{xy}$ & $\substack{q^{2i+2}[i][k-i][n-k-i]\\
    \bigl(\left(q^i[n-k-i]-1\right)\left([n][k-2]-[k]^2\right)\\ 
    +\left([n][k-1]-[k]^2\right)\bigr)}$ & $\substack{q^{2i+1}\left(q-1\right)[k-i][n-k-i]\\ [i]^2\left([n][k-2]-[k]^2\right)}$ & $\substack{q^{2i+2}[i][k-i][n-k-i]\\
    \bigl(\left(q^i[k-i]-1\right)\left([n][k-2]-[k]^2\right)\\ 
    +\left([n][k-1]-[k]^2\right)\bigr)}$\\
    \\
    $C_{xy}$ & $\substack{q^{i+1}[n-k-i][i]^2\\ \bigl(q^{k-2}[n]+[i]\left([n][k-2]-[k]^2\right)\bigr)}$ & $\substack{\left(q-1\right)[i]^2\bigl(q^{k-2}[n]\left(2[i]-1\right)\\ +[i]^2\left([n][k-2]-[k]^2\right)\bigr)}$ & $\substack{q^{i+1}[k-i][i]^2\\ \bigl(q^{k-2}[n]+[i]\left([n][k-2]-[k]^2\right)\bigr)}$\\
    \end{tabular}
\end{center}
\end{theorem}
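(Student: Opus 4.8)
The plan is to reduce everything to linear algebra that has already been carried out. By Theorem~\ref{alin2} the vectors $A_{xy}^{+},A_{xy}^{0},A_{xy}^{-}$ are written as explicit $\mathbb{R}$-linear combinations of the combinatorial basis $\widehat{x},\widehat{y},B_{xy},C_{xy}$, and by Lemma~\ref{innprodmat22} we know every inner product among $\widehat{x},\widehat{y},B_{xy},C_{xy}$. Since the $\widehat{x}$-row and $\widehat{y}$-row of the asserted table coincide with the corresponding rows of Theorem~\ref{aprodmat21}, it suffices to establish the $B_{xy}$-row and the $C_{xy}$-row.

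For the $B_{xy}$-row I would pair $B_{xy}$ with each side of the three identities (\ref{aplin2}), (\ref{a0lin2}), (\ref{amlin2}). Bilinearity turns $\langle B_{xy},A_{xy}^{+}\rangle$ (and the $A_{xy}^{0}$, $A_{xy}^{-}$ analogues) into a prescribed linear combination of $\langle B_{xy},\widehat{x}\rangle$, $\langle B_{xy},\widehat{y}\rangle$, $\langle B_{xy},B_{xy}\rangle$, $\langle B_{xy},C_{xy}\rangle$, each of which is an explicit entry of Lemma~\ref{innprodmat22}. Substituting and simplifying produces the three entries of the $B_{xy}$-row. The $C_{xy}$-row is obtained the same way, pairing $C_{xy}$ with (\ref{aplin2})--(\ref{amlin2}) and using the $C_{xy}$-entries of Lemma~\ref{innprodmat22} (in particular the two-term diagonal value $\langle C_{xy},C_{xy}\rangle$). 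As an independent check on the most error-prone column, one can instead compute $\langle B_{xy},A_{xy}^{0}\rangle$ and $\langle C_{xy},A_{xy}^{0}\rangle$ from the geometric-basis expansion (\ref{a0lin}) of $A_{xy}^{0}$ together with Lemma~\ref{innprodmat23}, and confirm that the two routes agree.

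The only real difficulty is the algebraic bookkeeping. The coefficients in Theorem~\ref{alin2} carry denominators such as $q^{k-i-1}[n-2k]$, $q^{k-2i+1}[i-1][n-2k]$, and $q^{k-i}[i-1][n-2k]$, while some entries of Lemma~\ref{innprodmat22} --- notably $\langle B_{xy},B_{xy}\rangle$ and $\langle C_{xy},C_{xy}\rangle$ --- are themselves sums of two terms, one $[n]$-linear and one quadratic in the bracketed quantities. So each target inner product emerges as a rational expression that must be put over a common denominator; the factor $[n-2k]$ then has to cancel identically, as it must since the stated table is $[n-2k]$-free. I would manage this by clearing denominators first, then using the standard identities $q^{a}[b]+[a]=[a+b]$ (equivalently $q^{a}[b]=[a+b]-[a]$) to collapse the $[n]$-linear parts, and only at the very end extracting the common factor $q^{i+1}[i]$ (for the $A_{xy}^{+}$ and $A_{xy}^{-}$ columns) or $(q-1)[i]^{2}$ (for the $A_{xy}^{0}$ column) to match the stated form. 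No genuinely new idea is needed beyond Theorem~\ref{alin2} and Lemma~\ref{innprodmat22}; everything else is careful but routine computation.
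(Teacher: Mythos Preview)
Your approach is correct, but the paper takes a shorter route for the $B_{xy}$- and $C_{xy}$-rows. Rather than expanding $A_{xy}^{+},A_{xy}^{0},A_{xy}^{-}$ in the combinatorial basis via Theorem~\ref{alin2} and pairing against Lemma~\ref{innprodmat22}, the paper expands them in the \emph{geometric} basis via Theorem~\ref{alin} (equations (\ref{aplin})--(\ref{amlin})) and pairs against Lemma~\ref{innprodmat23}, which already records $\langle B_{xy},\widehat{x}\rangle$, $\langle B_{xy},\widehat{x\cap y}\rangle$, $\langle B_{xy},\widehat{x+y}\rangle$ and the $C_{xy}$ analogues. This is exactly the ``independent check'' you relegate to the $A_{xy}^{0}$ column, promoted to the main method for all nine entries.

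The payoff of the paper's choice is precisely the bookkeeping issue you flag: the geometric-basis coefficients in (\ref{aplin})--(\ref{amlin}) are denominator-free polynomials in $q$, so no $[n-2k]$ or $[i-1]$ ever enters and nothing has to cancel. Your route through Theorem~\ref{alin2} and Lemma~\ref{innprodmat22} is logically sound and uses only results already established, but it manufactures rational expressions whose simplification (clearing $[n-2k]$, collapsing via $q^{a}[b]+[a]=[a+b]$) is avoidable work. Both arguments are valid; the paper's is the cleaner execution.
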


\begin{proof}
The entries in the first two rows are immediate from Theorem \ref{aprodmat21}. 

Next we calculate the inner product $\Bigl<B_{xy},A_{xy}^{+}\Bigr>$. 

    Using (\ref{aplin}), 
    \begin{equation*}
        \Bigl<B_{xy},A_{xy}^{+}\Bigr>=
        q^{i+1}[n-k-i][i-1]\bigl<B_{xy},\widehat{x}\bigr>+q^{2i}[n-k-i]\Bigl<B_{xy},\widehat{x\cap y}\Bigr>-[i]\Bigl<B_{xy},\widehat{x+y}\Bigr>.
    \end{equation*}
    In the above equation, evaluate the right-hand side using Lemma \ref{innprodmat23}.

    We have now calculated the inner product $\Bigl<B_{xy},A_{xy}^{+}\Bigr>$. For the other inner products the calculations are similar, and omitted.    
\end{proof}

\begin{theorem}
\label{innprodmat25}
Let $x,y\in X$ satisfy $1<\partial(x,y)<k$. In the following table, for each vector $u$ in the header column, and each vector $v$ in the header row, the $(u,v)$-entry of the table gives the inner product $\left<u,v\right>$. Write $i=\partial(x,y)$.
\begin{center}
    \begin{tabular}{c|c c c}
        $\left<\;,\;\right>$ & $A_{xy}^{+}$ & $A_{xy}^{0}$ & $A_{xy}^{-}$\\ 
         \hline
         \\
         $A_{xy}^{+}$ & $\substack{q^{2i+2}[i][n-k-i]
        \bigl(q^{k-i-2}[n][n-k]\\ +[i][n-k-i]\left([n][k-2]-[k]^2\right)\bigr)}$ & $\substack{q^{i+1}\left(q-1\right)[n-k-i][i]^2\\ \bigl(q^{k-2}[n]+[i]\left([n][k-2]-[k]^2\right)\bigr)}$ & $\substack{q^{2i+2}[k-i][n-k-i]\\ [i]^2\left([n][k-2]-[k]^2\right)}$ \\
        \\
        $A_{xy}^{0}$ & $\substack{q^{i+1}\left(q-1\right)[n-k-i][i]^2\\ \bigl(q^{k-2}[n]+[i]\left([n][k-2]-[k]^2\right)\bigr)}$& $\substack{\left(q-1\right)[i]^2\Bigl(q^{k-2}[n]\bigl(2\left(q-1\right)[i]+1\bigr)\\
        +\left(q-1\right)[i]^2\left([n][k-2]-[k]^2\right)\Bigr)}$ & $\substack{q^{i+1}(q-1)[k-i][i]^2\\ \bigl(q^{k-2}[n]+[i]\left([n][k-2]-[k]^2\right)\bigr)}$\\
        \\
        $A_{xy}^{-}$ & $\substack{q^{2i+2}[k-i][n-k-i]\\ [i]^2\left([n][k-2]-[k]^2\right)}$ & $\substack{q^{i+1}(q-1)[k-i][i]^2\\ \bigl(q^{k-2}[n]+[i]\left([n][k-2]-[k]^2\right)\bigr)}$ & $\substack{q^{2i+2}[i][k-i]
        \bigl(q^{k-i-2}[n][k]\\ 
        +[i][k-i]\left([n][k-2]-[k]^2\right)\bigr)}$\\
    \end{tabular}
\end{center}
\end{theorem}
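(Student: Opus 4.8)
The plan is to compute each of the nine inner products $\bigl<A_{xy}^{\epsilon},A_{xy}^{\epsilon'}\bigr>$ with $\epsilon,\epsilon'\in\{+,0,-\}$ by expanding the second argument in the geometric basis for $\Fix(x,y)$ using Theorem \ref{alin}, and then evaluating the resulting linear combination using the inner products between $A_{xy}^{\epsilon}$ and the geometric basis vectors $\widehat{x},\widehat{y},\widehat{x\cap y},\widehat{x+y}$ recorded in Theorem \ref{aprodmat21}. Concretely, for $A_{xy}^{+}$ we would write, using (\ref{aplin}),
\begin{equation*}
    \bigl<A_{xy}^{+},A_{xy}^{\epsilon'}\bigr>=q^{i+1}[n-k-i][i-1]\bigl<A_{xy}^{\epsilon'},\widehat{x}\bigr>+q^{2i}[n-k-i]\bigl<A_{xy}^{\epsilon'},\widehat{x\cap y}\bigr>-[i]\bigl<A_{xy}^{\epsilon'},\widehat{x+y}\bigr>,
\end{equation*}
and similarly use (\ref{a0lin}) and (\ref{amlin}) to handle $A_{xy}^{0}$ and $A_{xy}^{-}$ in the first slot; every inner product appearing on the right-hand side is an entry of the table in Theorem \ref{aprodmat21}. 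This gives all nine entries; symmetry of the bilinear form provides a consistency check on the three pairs of off-diagonal entries.

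The steps, in order, are: (1) record the three expansions from Theorem \ref{alin} and the twelve inner products from Theorem \ref{aprodmat21}; (2) substitute and collect, producing a raw expression for each of the nine entries as an $\mathbb{F}_q$-rational combination of products of bracket symbols $[m]$; (3) simplify each raw expression into the compact closed form displayed in the table, repeatedly using the identity $[a]-[b]=q^{b}[a-b]$ (equivalently $[n]=[m]+q^{m}[n-m]$) to factor out common terms such as $q^{k-2}[n]$ and $[n][k-2]-[k]^2$. For instance, the diagonal entry $\bigl<A_{xy}^{+},A_{xy}^{+}\bigr>$ should collapse, after using $[n][k-1]-[k][k+i]=[n][k-1]-[k]^2-q^k[k][i]$ and regrouping, to the stated form $q^{2i+2}[i][n-k-i]\bigl(q^{k-i-2}[n][n-k]+[i][n-k-i]([n][k-2]-[k]^2)\bigr)$; the other diagonal entries behave analogously by the $x\leftrightarrow y$, $(z+x+y)\leftrightarrow(z\cap x\cap y)$ symmetry relating the $+$ and $-$ cases.

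I expect the main obstacle to be purely the algebraic simplification in step (3): the raw substituted expressions are sums of several products of three or four bracket symbols with various powers of $q$, and massaging them into the advertised factored shape requires choosing the right grouping and applying the bracket identities in the right order — it is routine but error-prone, and the $A_{xy}^{0}$-related entries are the messiest because (\ref{a0lin}) involves all four geometric basis vectors with no vanishing coefficient. There is no conceptual difficulty: once the expansions of Theorems \ref{alin} and \ref{aprodmat21} are in hand, every entry is determined, and the only real work is verifying that the bookkeeping lands on the stated closed forms. As in the proofs of Lemmas \ref{ax}--\ref{ax+y}, I would present the computation of one representative entry — say $\bigl<A_{xy}^{+},A_{xy}^{0}\bigr>$ — in detail and note that the remaining eight are obtained in the same fashion and are omitted.
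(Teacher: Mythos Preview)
Your proposal is correct and follows essentially the same approach as the paper: expand one argument via Theorem \ref{alin} in the geometric basis and evaluate the resulting inner products using Theorem \ref{aprodmat21}. The paper works out $\bigl<A_{xy}^{+},A_{xy}^{+}\bigr>$ as the representative entry rather than $\bigl<A_{xy}^{+},A_{xy}^{0}\bigr>$, but otherwise the argument is the same.
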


\begin{proof}
    We will calculate the inner product $\Bigl<A_{xy}^{+},A_{xy}^{+}\Bigr>$. 

    Using (\ref{aplin}), 
    \begin{equation*}
        \Bigl<A_{xy}^{+},A_{xy}^{+}\Bigr>=
        q^{i+1}[n-k-i][i-1]\Bigl<A_{xy}^+,\widehat{x}\Bigr>+q^{2i}[n-k-i]\Bigl<A_{xy}^+,\widehat{x\cap y}\Bigr>-[i]\Bigl<A_{xy}^+,\widehat{x+y}\Bigr>.
    \end{equation*}
    In the above equation, evaluate the right-hand side using Theorem \ref{aprodmat21}.

    We have now calculated the inner product $\Bigl<A_{xy}^{+},A_{xy}^{+}\Bigr>$. For the other inner products the calculations are similar, and omitted.
\end{proof}

\section{Some combinatorics and algebra involving the $y$-partition of $\Gamma(x)$}

Let $x,y \in X$ satisfy $1 < \partial(x,y) < k$. In (\ref{5orbits}) we partitioned $\Gamma(x)$ into five orbits for $\Stab(x,y)$. In this section, we describe the edges between pairs of orbits in this partition. In this description we use a 5 by 5 matrix. We find the eigenvalues of this matrix. For each eigenvalue we display a row eigenvector and column eigenvector.

\begin{theorem}
\label{adjacency}
Let $x,y\in X$ satisfy $1<\partial(x,y)<k$, and consider the orbits of $\Stab(x,y)$ on $\Gamma(x)$. Referring to the table below, for each orbit $\mathcal{O}$ in the header column, and each orbit $\mathcal{N}$ in the header row, the $(\mathcal{O},\mathcal{N})$-entry gives the number of vertices in $\mathcal{N}$ that are adjacent to a given vertex in $\mathcal{O}$. Write $i=\partial(x,y)$.

\begin{center}
\begin{tabular}{c|c c c c c}
     & $\mathcal{B}_{xy}$ & $\mathcal{C}_{xy}$ & $\mathcal{A}_{xy}^{+}$ & $\mathcal{A}_{xy}^{0}$ & $\mathcal{A}_{xy}^{-}$\\
    \hline \\
    $\mathcal{B}_{xy}$ & $\substack{q^{i+1}[k-i]\\+q^{i+1}[n-k-i]-q-1}$ & $0$ & $q[i]$ & $0$ & $q[i]$\\
   \\
    $\mathcal{C}_{xy}$ & $0$ & $2q[i-1]$ & $q^{i+1}[n-k-i]$ & $(q-1)\bigl(2[i]-1\bigr)$ & $q^{i+1}[k-i]$\\
   \\
    $\mathcal{A}_{xy}^{+}$ & $q^{i+1}[k-i]$ & $[i]$ & $q[n-k]-q-1$ & $(q-1)[i]$ & $0$\\
    \\
    $\mathcal{A}_{xy}^{0}$ & $0$ & $2[i]-1$ & $q^{i+1}[n-k-i]$ & $(q-1)\bigl(2[i]-1\bigr)-1$ &$q^{i+1}[k-i]$\\
    \\
    $\mathcal{A}_{xy}^{-}$ & $q^{i+1}[n-k-i]$ & $[i]$ & $0$ & $(q-1)[i]$ & $q[k]-q-1$\\
\end{tabular}

\end{center}
\end{theorem}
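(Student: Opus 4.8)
The plan is to fix a vertex $z$ in each of the five orbits and count, for each of the other orbits $\mathcal{N}$, how many vertices of $\mathcal{N}$ are adjacent to $z$. Since the $y$-partition is equitable (Theorem \ref{5orbitstheorem} and the definition following it), this count is independent of the choice of $z$ within its orbit, so it suffices to treat one representative per orbit. Throughout I write $i=\partial(x,y)$ and $w=x\cap y$, $W=x+y$, and use repeatedly that $\dim w=k-i$ and $\dim W=k+i$ (Lemma \ref{introlem0}). The neighbours of $z$ in $\Gamma(x)$ are the $k$-subspaces $z'$ with $\dim(z\cap z')=k-1$; equivalently $z'=(z\cap z')+\FF\xi$ for a one-dimensional $\FF\xi\not\subseteq z$, and $z\cap z'$ ranges over the hyperplanes of $z$. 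So the counting reduces to: choose a hyperplane $h$ of $z$, choose a line $\FF\xi$ with $\xi\notin z$, form $z'=h+\FF\xi$, and determine which orbit $z'$ lands in from the relative position of $z'$ with respect to $x$, $y$, $w$, $W$. The orbit of $z'$ is detected by the pair $\bigl(\dim(z'\cap y),\,\text{whether }z'+W\supsetneq W,\,\text{whether }z'\cap w\subsetneq w\bigr)$, using Definitions \ref{calbcdef} and \ref{caladef} together with Lemma \ref{staborbit} (the orbit of $z'$ under $\Stab(x,y)$ is determined by the dimensions $\dim(z'\cap x)$, $\dim(z'\cap y)$, and — since we also need $\Stab(x,y)$-invariants, not just $\Stab(x)$ and $\Stab(y)$ separately — one checks these three numerical data suffice to separate the five orbits).

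The bookkeeping I would organize as follows. First handle the ``no-$x$-intersection-loss'' directions, i.e. neighbours $z'$ with $\dim(z'\cap x)=k-1$: these are obtained by taking the hyperplane $h=z\cap x$ of $z$ (when $z\ne x$; for $z=x$ every hyperplane qualifies) and adjoining a line outside $x$. This is the mechanism that produces, from a vertex in $\mathcal{C}_{xy}$ or in $\mathcal{A}_{xy}^{\pm,0}$, its neighbours inside $\mathcal{B}_{xy}$, $\mathcal{A}_{xy}^{+}$, $\mathcal{A}_{xy}^{-}$ — the chosen line either stays in $W$ (giving $\mathcal{A}^{+}$ or $\mathcal{A}^{-}$ or $\mathcal{C}$, depending on whether it meets $y$) or leaves $W$ (giving $\mathcal{B}$ or $\mathcal{A}^{+}$). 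Counting lines of each type is a standard projective count: the number of lines in $W$ but not in a given hyperplane is controlled by $[k+i]-[\dim]$ type expressions, the number of lines outside $W$ is $[n]-[k+i]$, and after projectivizing modulo the hyperplane $h$ these collapse to the $q$-power times $[k-i]$, $[n-k-i]$, $[i]$ expressions in the table. The structural input here is exactly Lemma \ref{6cor} and its analogues for $\mathcal{A}^{\pm}$: it pins down the direct-sum decompositions of $z$, $x$, $z+x$ relative to $w$, which makes the dimension counts mechanical. Second, handle the remaining hyperplanes $h$ of $z$ (those with $h\not\supseteq z\cap x$), where $\dim(h\cap x)=k-2$; here adjoining any line gives $z'$ with $\dim(z'\cap x)=k-2$, so $z'$ is at distance $2$ from $x$ and contributes nothing to $\Gamma(x)$ — this observation is what kills most of the potential terms and explains the many zeros in the table.

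The single most delicate point, and the one I would spend the most care on, is the diagonal entries — the valency of each orbit as an induced subgraph of the local graph — and the entries of the $\mathcal{C}_{xy}$ and $\mathcal{A}_{xy}^{0}$ rows, because these are the orbits whose representatives have the richest incidence with $w$ and $W$ simultaneously. For instance, for $z\in\mathcal{A}_{xy}^{0}$ one has $z\subseteq W$ and $z\supseteq w$, so a neighbour $z'$ can lose the $w$-containment (landing in $\mathcal{A}^{-}$ or $\mathcal{C}$) or leave $W$ (landing in $\mathcal{A}^{+}$ or $\mathcal{B}$), and the four sub-counts must be done separately and must sum correctly; the ``$-1$'' corrections in the $\mathcal{A}^{0}$ row and in the $(\mathcal{B}_{xy},\mathcal{B}_{xy})$ and $(\mathcal{A}^{+},\mathcal{A}^{+})$, $(\mathcal{A}^{-},\mathcal{A}^{-})$ entries come from excluding $z'=z$ itself, so one must be scrupulous about when the constructed $z'$ can coincide with $z$. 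As a global consistency check I would verify that in each row the five entries sum to the valency $\kappa=q[k][n-k]$ of $\Gamma$, which both catches arithmetic slips and, combined with Lemma \ref{bcorbit} and Lemma \ref{acount}, certifies that no orbit has been missed. After that, each entry is a finite $q$-binomial identity that simplifies to the displayed closed form; I would relegate these simplifications to ``routine from $[m]$-arithmetic.''
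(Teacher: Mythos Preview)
Your direct-counting plan is a reasonable alternative to the paper's argument, but the case split contains a genuine error. You assert that for a hyperplane $h\subsetneq z$ with $h\ne z\cap x$, adjoining any line gives $z'$ with $\dim(z'\cap x)=k-2$. This fails whenever the adjoined line lies in $x\setminus z$: then $z'=h+\mathbb{F}\xi$ satisfies $z'\cap x=(h\cap x)+\mathbb{F}\xi$, which has dimension $k-1$, so $z'\in\Gamma(x)$ after all. The underlying fact you are missing is that for $z,z'\in\Gamma(x)$ one has $\partial(z,z')=1$ if and only if $z\cap x=z'\cap x$ \emph{or} $z+x=z'+x$; your ``first case'' ($h=z\cap x$, line outside $x$) produces exactly the neighbours with $z'\cap x=z\cap x$, and your ``second case'' then discards the entire family with $z'+x=z+x$ but $z'\cap x\ne z\cap x$. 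That family has size $(q[k]-1)-(q-1)=q^2[k-1]$ for each $z$, so the omission is not a boundary effect. Relatedly, your global consistency check is misstated: each row of the table sums to the valency $a_1=q[k]+q[n-k]-q-1$ of the local graph $\Gamma(x)$, not to $\kappa$; had you used $a_1$, the check would have flagged the missing neighbours immediately.

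For comparison, the paper does not count subspaces at all. For $w\in\mathcal{O}$ and an orbit $\mathcal{N}$ with characteristic vector $N$, every $z\in\mathcal{N}$ is at distance $0$, $1$, or $2$ from $w$, so by Lemma~\ref{introlem1}(iii),(iv) the inner product $\langle\widehat{w},N\rangle$ is an affine function of the unknown table entry. Since $\langle O,N\rangle=|\mathcal{O}|\,\langle\widehat{w},N\rangle$ and the left-hand side is already known from Lemma~\ref{innprodmat22} and Theorems~\ref{innprodmat24},~\ref{innprodmat25}, one solves a single linear equation per entry. Your combinatorial route becomes correct, and comparably clean, once you also enumerate the neighbours with $z'+x=z+x$: these are the hyperplanes of the fixed $(k{+}1)$-space $z+x$ other than $x$ and $z$, and the orbit of each is read off from $\dim(z'\cap y)$ and $\dim(z'\cap w)$.
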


\begin{proof}
We will verify the $\bigl(\mathcal{B}_{xy},\mathcal{B}_{xy}\bigr)$-entry of the table. Pick a vertex $w\in \mathcal{B}_{xy}$. Let $\#$ denote the number of vertices in $\mathcal{B}_{xy}$ that are adjacent to $w$. Note that $\#$ is independent of the choice of $w$, because the partition (\ref{5orbits}) is equitable.

We now compute $\#$. By construction, each vertex in $\mathcal{B}_{xy}$ is at distance at most $2$ from $w$. Using Lemma \ref{introlem1}(iii), (iv) we obtain

\begin{equation}
\label{wbxy}
\begin{aligned}
    \bigl<\widehat{w},B_{xy}\bigr>=\sum_{z\in \mathcal{B}_{xy}}\bigl<\widehat{w},\widehat{z}\bigr>=\;&q^k[k][n-k]+\#\Bigl([n][k-1]-[k]^2\Bigr)\\
    &\qquad \qquad+\Bigl(\bigl\vert \mathcal{B}_{xy}\bigr\vert-\#-1\Bigr)\Bigl([n][k-2]-[k]^2\Bigr).
\end{aligned}
\end{equation}

By construction,
\begin{equation}
    \label{bxybxy}
    \bigl<B_{xy},B_{xy}\bigr>=\bigl\vert \mathcal{B}_{xy}\bigr\vert\bigl<\widehat{w},B_{xy}\bigr>.
\end{equation}

We now evaluate (\ref{bxybxy}). The left-hand side is evaluated using the $\bigl(B_{xy},B_{xy}\bigr)$-entry in the table of Lemma \ref{innprodmat22}. The right-hand side is evaluated using (\ref{wbxy}) and $b_i=\bigl\vert\mathcal{B}_{xy}\bigr\vert$; the value of $b_i$ is given in (\ref{sizebc}).

After evaluating (\ref{bxybxy}), we solve the resulting equation for $\#$; this yields the $\bigl(\mathcal{B}_{xy},\mathcal{B}_{xy}\bigr)$-entry of the table. The other entries are obtained in a similar fashion.
\end{proof}

\begin{definition}
    For $1<i<k$ let $\mathcal{M}_i$ denote the $5\times 5$ matrix in Theorem \ref{adjacency}. 
\end{definition}

Note that $\mathcal{M}_i$ is not symmetric. We now give the transpose $\mathcal{M}_i^{t}$.

\begin{lemma}
    For $1<i<k$, we have $\mathcal{M}_i^{t}=D\mathcal{M}_iD^{-1}$, where $D={\rm{diag}}\bigl(b_i, c_i, a_i^{+}, a_i^{0}, a_i^{-}\bigr)$. Recall $b_i,c_i$ from (\ref{sizebc}) and $a_i^{+},a_i^{0},a_i^{-}$ from (\ref{acountdef}).
\end{lemma}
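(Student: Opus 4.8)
The plan is to deduce the identity from a single double-counting argument applied to the equitable $y$-partition of $\Gamma(x)$; no heavy computation is involved.

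First I would fix the labelling: index the five orbits $\mathcal{B}_{xy}, \mathcal{C}_{xy}, \mathcal{A}_{xy}^{+}, \mathcal{A}_{xy}^{0}, \mathcal{A}_{xy}^{-}$ by $1,\dots,5$ in the order used for the rows and columns of $\mathcal{M}_i$, and write $\mathcal{O}_1,\dots,\mathcal{O}_5$ for them. Then $D=\mathrm{diag}(|\mathcal{O}_1|,\dots,|\mathcal{O}_5|)$: indeed $|\mathcal{B}_{xy}|=b_i$ and $|\mathcal{C}_{xy}|=c_i$ by the remark following Definition~\ref{calbcdef}, while $|\mathcal{A}_{xy}^{+}|=a_i^{+}$, $|\mathcal{A}_{xy}^{0}|=a_i^{0}$, $|\mathcal{A}_{xy}^{-}|=a_i^{-}$ by Lemma~\ref{acount} and (\ref{acountdef}). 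I would also record that for $1<i<k$ all five of these cardinalities are positive (see (\ref{acount2}) and (\ref{sizebc})), so $D$ is invertible.

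Next, for fixed $r,s$ I would count in two ways the set of ordered pairs $(u,v)$ with $u\in\mathcal{O}_r$, $v\in\mathcal{O}_s$, and $u$ adjacent to $v$. Since the $y$-partition is equitable, every vertex of $\mathcal{O}_r$ is adjacent to exactly $(\mathcal{M}_i)_{rs}$ vertices of $\mathcal{O}_s$, so summing over $u\in\mathcal{O}_r$ gives $|\mathcal{O}_r|\,(\mathcal{M}_i)_{rs}$, while summing instead over $v\in\mathcal{O}_s$ gives $|\mathcal{O}_s|\,(\mathcal{M}_i)_{sr}$. Hence
\begin{equation*}
    |\mathcal{O}_r|\,(\mathcal{M}_i)_{rs}=|\mathcal{O}_s|\,(\mathcal{M}_i)_{sr} \qquad (1\le r,s\le 5).
\end{equation*}
The left side is the $(r,s)$-entry of $D\mathcal{M}_i$ and the right side is the $(r,s)$-entry of $\mathcal{M}_i^{t}D$, so $D\mathcal{M}_i=\mathcal{M}_i^{t}D$; multiplying on the right by $D^{-1}$ yields $\mathcal{M}_i^{t}=D\mathcal{M}_iD^{-1}$.

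There is no genuine obstacle here; the only points needing care are aligning the orbit ordering with the diagonal of $D$ and confirming that $D$ is invertible. As an optional sanity check, not part of the proof, one can verify the identity entrywise directly from the table in Theorem~\ref{adjacency} together with the formulas for $b_i,c_i$ in (\ref{sizebc}) and for $a_i^{+},a_i^{0},a_i^{-}$ in (\ref{acount2}); for instance the $\bigl(\mathcal{B}_{xy},\mathcal{A}_{xy}^{+}\bigr)$ and $\bigl(\mathcal{A}_{xy}^{+},\mathcal{B}_{xy}\bigr)$ entries satisfy $b_i\cdot q[i]=a_i^{+}\cdot q^{i+1}[k-i]$, both equal to $q^{2i+2}[i][k-i][n-k-i]$.
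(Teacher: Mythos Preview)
Your argument is correct and is precisely the standard double-counting behind the paper's one-word proof ``Immediate'': the relation $|\mathcal{O}_r|(\mathcal{M}_i)_{rs}=|\mathcal{O}_s|(\mathcal{M}_i)_{sr}$ is exactly what makes the identity immediate for any equitable partition, and you have spelled it out carefully (including the check that $D$ is invertible). There is no meaningful difference in approach.
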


\begin{proof}
Immediate.    
\end{proof}
 
Our next goal is to find the eigenvalues of $\mathcal{M}_i$. For each eigenvalue we display a row eigenvector and a column eigenvector.

\begin{lemma}
    For $1<i<k$, the eigenvalues of the matrix $\mathcal{M}_i$ are
    \begin{equation*}
        a_1,\qquad q[n-k]-q-1,\qquad q[k]-q-1 ,\qquad -1 ,\qquad -q-1,
    \end{equation*}
    where $a_1=q[k]+q[n-k]-q-1$.
\end{lemma}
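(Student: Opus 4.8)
The plan is to identify the eigenvalues of $\mathcal{M}_i$ by exhibiting explicit column eigenvectors built from the data we have already assembled, rather than by computing a characteristic polynomial. The key observation is that the characteristic vectors $\widehat{x}$, $B_{xy}$, $C_{xy}$, $A_{xy}^{+}$, $A_{xy}^{0}$, $A_{xy}^{-}$ of the five orbits live in $\Fix(x,y)$, which is $4$-dimensional, so they satisfy one linear relation beyond the ones already recorded; moreover the adjacency matrix $\mathbf{A}$ of $\Gamma$ acts on $E$, and because the $y$-partition is equitable, $\mathbf{A}$ maps the span of the six orbit-characteristic vectors (equivalently, the span of $\widehat{x}$ together with the five orbit sums) into itself. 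Concretely, for an orbit $\mathcal{O}$ with characteristic vector $S_{\mathcal{O}}=\sum_{z\in\mathcal{O}}\widehat{z}$, applying $\mathbf{A}$ and using equitability gives $\mathbf{A}\,S_{\mathcal{O}}=\sum_{\mathcal{N}}(\mathcal{M}_i)_{\mathcal{N}\mathcal{O}}\,S_{\mathcal{N}}+(\text{terms supported outside }\Gamma(x))$. The cleanest route, though, is purely combinatorial: the transpose relation $\mathcal{M}_i^{t}=D\mathcal{M}_iD^{-1}$ from the preceding lemma shows $\mathcal{M}_i$ is diagonalizable over $\mathbb{R}$ with the same spectrum as the symmetric matrix $D^{1/2}\mathcal{M}_iD^{-1/2}$, so it suffices to produce five linearly independent eigenvectors.

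First I would record the all-ones vector $\mathbf{1}=(1,1,1,1,1)^{t}$: since each row of $\mathcal{M}_i$ sums to the valency $a_1=q[k]+q[n-k]-q-1$ of the local graph $\Gamma(x)$ (the row sums count all neighbours of a vertex $w\in\mathcal O$ inside $\Gamma(x)$, which is $|\Gamma(x)\cap\Gamma(w)|=\lambda=a_1$ because $\Gamma$ has $a_1=\theta_0-1-(\kappa-1)$... more precisely $\lambda = a_1$ is the number of common neighbours of adjacent vertices, and every common neighbour of $x$ and $w$ lies in $\Gamma(x)$), we get $\mathcal{M}_i\mathbf{1}=a_1\mathbf{1}$. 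This is worth checking by summing each row of the table, which simplifies using $q[k-i]+q[n-k-i]+$ the $[i]$-terms; the cross-orbit contributions telescope to $a_1$. Next I would treat the two orbits $\mathcal{A}_{xy}^{+}$ and $\mathcal{A}_{xy}^{-}$, which are visibly "dual" under swapping $k\leftrightarrow n-k$ (equivalently $x$ and $y$ play symmetric roles via the Grassmann duality $J_q(n,k)\cong J_q(n,n-k)$, which exchanges $+$ and $-$ and fixes $\mathcal{B}_{xy},\mathcal{C}_{xy},\mathcal{A}_{xy}^0$). This symmetry should furnish eigenvectors: the sublattice of vectors supported on $\{\mathcal{A}_{xy}^{+},\mathcal{A}_{xy}^{-}\}$ that are fixed or anti-fixed, combined with the block structure, should produce the eigenvalues $q[n-k]-q-1$ and $q[k]-q-1$ (note the $(\mathcal{A}_{xy}^{+},\mathcal{A}_{xy}^{+})$ and $(\mathcal{A}_{xy}^{-},\mathcal{A}_{xy}^{-})$ diagonal entries $q[n-k]-q-1$ and $q[k]-q-1$ respectively, and $\mathcal{A}_{xy}^{+}$ has no edge to $\mathcal{A}_{xy}^{-}$). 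For the remaining two eigenvalues $-1$ and $-q-1$, I would look inside the span of $\{\mathcal{B}_{xy},\mathcal{C}_{xy},\mathcal{A}_{xy}^{0}\}$-type combinations; these small negative eigenvalues are exactly the eigenvalues of the clique-sum structure of $\Gamma(x)$ (the local graph of a Grassmann graph is a "grid-like" graph $J_q(n,1)\times J_q(\cdot)$ fragment, whose spectrum is $\{a_1,\ q[k]-q-1,\ q[n-k]-q-1,\ -1,\ -q-1\}$ with appropriate multiplicities, and $-1,-q-1$ are the eigenvalues of the two "projective-line" factors), so I would guess eigenvectors proportional to differences of indicator-style vectors and verify.

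Cleaner than guessing: I would use the Euclidean-representation data directly. For an eigenvalue $\theta$ with column eigenvector $v=(v_{\mathcal B},v_{\mathcal C},v_+,v_0,v_-)^{t}$, the vector $W=v_{\mathcal B}B_{xy}+v_{\mathcal C}C_{xy}+v_+A_{xy}^{+}+v_0A_{xy}^{0}+v_-A_{xy}^{-}\in\Fix(x,y)$ should, by equitability, satisfy a relation of the form $\mathbf{A}W\equiv \theta W$ modulo the span of $\widehat{x}$; expanding $\mathbf{A}W$ via $\mathbf A\widehat z=\sum_{\partial(w,z)=1}\widehat w$ and comparing $\Gamma(x)$-components against the table entries is precisely the eigenvector condition $\mathcal{M}_i^{t}v=\theta v$. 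So I can certify each claimed eigenvalue by checking that a proposed $v$ satisfies $\mathcal{M}_i v=\theta v$ through five scalar identities in $[m]$-notation, and then observe that the five eigenvalues are distinct (for $n>2k\geq 6$ one checks $a_1>q[n-k]-q-1>q[k]-q-1>-1>-q-1$, using $k<n-k$ hence $[k]<[n-k]$) so $\mathcal{M}_i$ is diagonalizable and these are all its eigenvalues. The main obstacle is bookkeeping: producing the correct eigenvectors for $-1$ and $-q-1$ (whose components will involve ratios like $1/[i]$ or $q^{i}$ and will not be "nice" symmetric vectors) and then grinding the five $q$-binomial identities to confirm $\mathcal{M}_i v=\theta v$; I expect to organise this by first verifying $\mathcal{M}_i\mathbf 1=a_1\mathbf 1$ and the $\pm$-symmetry eigenvectors (which are forced and short), and only then solving a small linear system for the last two eigenvectors using the already-known fact that the total spectrum must match that of $\Gamma(x)$, which pins down the two remaining values as $-1$ and $-q-1$ and lets me back-solve for $v$.
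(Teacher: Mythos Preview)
Your proposal is correct, but it is considerably more elaborate than what the paper does: the paper's entire proof is the single word ``Routine,'' i.e.\ a direct computation with the explicit $5\times 5$ matrix from Theorem~\ref{adjacency} (and the following lemma then lists the eigenvectors, again with proof ``Routine''). In effect you are reconstructing, with conceptual motivation, the eigenvector table that the paper simply writes down and checks.

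What you do differently: you motivate the eigenvectors via (i) the row-sum observation $\mathcal{M}_i\mathbf{1}=a_1\mathbf{1}$, (ii) the formal $k\leftrightarrow n-k$ symmetry swapping $\mathcal{A}_{xy}^{+}$ and $\mathcal{A}_{xy}^{-}$, (iii) the equitable-partition/Euclidean-representation link, and (iv) the known spectrum of the local graph $\Gamma(x)$. All of this is sound as heuristic guidance, and once you have candidate eigenvectors the verification is the same five scalar identities the paper implicitly checks. One caution: your step of ``using the already-known fact that the total spectrum must match that of $\Gamma(x)$'' to pin down $-1$ and $-q-1$ is, in the paper's logical order, not yet available (that matching is stated only afterwards, as a remark citing \cite{Liang,Watanabe}); it is fine as a guess-then-verify device, but you should not present it as an independent argument. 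Likewise, the duality and the $\mathbf{A}$-action discussion are not needed for the proof itself. If you want to match the paper's level, it suffices to exhibit the five column eigenvectors (exactly those in Lemma~\ref{eigentable}) and note that the five eigenvalues are pairwise distinct since $n>2k\geq 6$ forces $a_1>q[n-k]-q-1>q[k]-q-1>-1>-q-1$.
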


\begin{proof}
    Routine.
\end{proof}

\begin{lemma}
\label{eigentable}
    For $1<i<k$ we consider the matrix $\mathcal{M}_i$. In the table below, for each eigenvalue of $\mathcal{M}_i$, we display a corresponding row eigenvector and column eigenvector. Recall $b_i,c_i$ from (\ref{sizebc}) and $a_i^{+},a_i^{0},a_i^{-}$ from (\ref{acountdef}).

    \begin{center}
    \begin{tabular}{c|c|c}
        Eigenvalue of $\mathcal{M}_i$ & corresponding row eigenvector & corresponding column eigenvector\\ 
         \hline
         & & \\
         $a_1$ & $\bigl(b_i,c_i,a_i^{+},a_i^{0},a_i^{-}\bigr)$ &$(1,1,1,1,1)^{t}$\\
         & & \\
         $q[n-k]-q-1$ & $\bigl(a_i^{+},-c_i,-a_i^{+},-a_i^{0},qc_i\bigr)$ & $\bigl(qc_i,-a_i^{-},-a_i^{-},-a_i^{-},qc_i\bigr)^{t}$\\
         & & \\
         $q[k]-q-1$ & $\bigl(a_i^{-},-c_i,qc_i,-a_i^{0},-a_i^{-}\bigr)$& $\bigl(qc_i,-a_i^{+},qc_i,-a_i^{+},-a_i^{+}\bigr)^t$\\ 
         & & \\
         $-1$ & $(0,1,0,-1,0)$ & $(0,q-1,0,-1,0)^t$\\ 
         & & \\
         $-q-1$ & $(q,1,-q,q-1,-q)$ & $\bigl(qc_i,b_i,-a_i^{-},b_i,-a_i^{+}\bigr)^t$                
    \end{tabular}
\end{center}
\end{lemma}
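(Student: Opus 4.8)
The plan is to prove the lemma by direct verification: each displayed vector will be shown to satisfy the appropriate left- or right-eigenvector equation for $\mathcal{M}_i$, with the computation trimmed by a few structural observations.

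First I would record some reductions. By the lemma giving the eigenvalues of $\mathcal{M}_i$, the five numbers $a_1$, $q[n-k]-q-1$, $q[k]-q-1$, $-1$, $-q-1$ are the eigenvalues; under the standing hypotheses $n>2k\ge 6$ and $1<i<k$ they are pairwise distinct, so every eigenspace of $\mathcal{M}_i$ and of $\mathcal{M}_i^{t}$ is one-dimensional. Moreover every displayed vector is nonzero, since $b_i,c_i,a_i^{+},a_i^{0},a_i^{-}$ are all positive for $1<i<k$ (for instance $a_i^{+}=q^{i+1}[i][n-k-i]>0$ because $i<k<n-k$). Hence it suffices, for each eigenvalue, to check that the displayed vector satisfies the relevant eigen-equation. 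Next, by the lemma asserting $\mathcal{M}_i^{t}=D\mathcal{M}_iD^{-1}$ with $D=\mathrm{diag}(b_i,c_i,a_i^{+},a_i^{0},a_i^{-})$: if $w$ is a column eigenvector of $\mathcal{M}_i$ for $\lambda$, then $\mathcal{M}_i^{t}(Dw)=D\mathcal{M}_iw=\lambda Dw$, so $(Dw)^{t}$ is a row eigenvector of $\mathcal{M}_i$ for $\lambda$. So I only need to verify the five column-eigenvector claims; the five row-eigenvector claims then follow once one checks that $(Dw)^{t}$ is a scalar multiple of the listed row eigenvector, and these checks reduce to the identities $a_i^{0}=(q-1)c_i$ and $q\,b_ic_i=a_i^{+}a_i^{-}$, with proportionality scalars $1$, $a_i^{-}$, $a_i^{+}$, $a_i^{0}$, $b_ic_i$ for the five eigenvalues in the order listed.

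Finally, I would exploit the symmetry of $\mathcal{M}_i$ coming from the isomorphism $J_q(n,k)\cong J_q(n,n-k)$: let $\tau$ denote the formal substitution interchanging $[k]\leftrightarrow[n-k]$ and $[k-i]\leftrightarrow[n-k-i]$ while fixing $q$, $[i]$, $[i-1]$, and let $P$ denote the permutation matrix transposing the coordinates indexed by $\mathcal{A}_{xy}^{+}$ and $\mathcal{A}_{xy}^{-}$. A direct check on the entries in Theorem \ref{adjacency} shows $\tau(\mathcal{M}_i)=P\mathcal{M}_iP$; consequently $\tau$ interchanges the eigenvalues $q[n-k]-q-1$ and $q[k]-q-1$ and carries the displayed eigenvectors for one to those for the other. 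Thus it remains only to verify the column eigenvectors for $a_1$, $q[n-k]-q-1$, $-1$, and $-q-1$. For $a_1$ the vector $(1,1,1,1,1)^{t}$ works because every row of $\mathcal{M}_i$ sums to $a_1$: each row sum counts all the neighbours in $\Gamma(x)$ of a vertex of the corresponding orbit, so it equals the valency $a_1=p^{1}_{1,1}=\kappa-b_1-c_1$ of the local graph, and the identity $\kappa-b_1-c_1=q[k]+q[n-k]-q-1$ follows from $[m]=q[m-1]+1$. For the remaining three eigenvalues one multiplies $\mathcal{M}_i$, with entries read off from Theorem \ref{adjacency}, by the listed column vector and checks coordinate by coordinate that the product is $\lambda$ times that vector, using only $[m]=q[m-1]+1$ and the related identity $[m]=[i]+q^{i}[m-i]$.

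The only genuine labour is this last coordinate-wise checking for the three eigenvalues $q[n-k]-q-1$, $-1$, $-q-1$; it involves no idea beyond routine manipulation of $q$-integers, and the main pitfall is careful bookkeeping of signs and of the three ``length'' parameters $i$, $k-i$, $n-k-i$. (Equivalently, one could verify the row eigenvectors directly from $r\mathcal{M}_i=\lambda r$ and recover the column eigenvectors as $D^{-1}r^{t}$, at the same cost.)
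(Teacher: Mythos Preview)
Your proposal is correct. The paper's own proof consists of the single word ``Routine,'' i.e.\ an unadorned direct check of each eigen-equation; your argument is in the same spirit but is organized more economically. Two of your reductions are genuinely worth noting: (i) using the conjugacy $\mathcal{M}_i^{t}=D\mathcal{M}_iD^{-1}$ to deduce the row eigenvectors from the column ones via the identities $a_i^{0}=(q-1)c_i$ and $q\,b_ic_i=a_i^{+}a_i^{-}$; and (ii) using the formal involution $\tau$ interchanging $[k]\leftrightarrow[n-k]$, $[k-i]\leftrightarrow[n-k-i]$ together with the permutation $P$ swapping the $\mathcal{A}_{xy}^{+}$ and $\mathcal{A}_{xy}^{-}$ coordinates, which satisfies $\tau(\mathcal{M}_i)=P\mathcal{M}_iP$, to transport the verification for $q[n-k]-q-1$ to that for $q[k]-q-1$. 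These shortcuts cut the amount of raw $q$-integer manipulation roughly in half and make the structure of the eigenvector table (its evident $\mathcal{A}^{+}\!\leftrightarrow\!\mathcal{A}^{-}$ symmetry and the pairing of row and column data) transparent, whereas the paper simply leaves all of this implicit.
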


\begin{proof}
    Routine.
\end{proof}

\begin{remark}{\cite{Liang, Watanabe}}
    For $x\in X$ the spectrum of the local graph $\Gamma(x)$ is given in the table below. Recall $a_1=q[k]+q[n-k]-q-1$.

    \begin{center}
    \begin{tabular}{c|c}
        Eigenvalue & Multiplicity\\ 
         \hline
         \\
         $a_1$ & $1$\\
         \\
         $q[n-k]-q-1$ & $[k]-1$\\
         \\
         $q[k]-q-1$ & $[n-k]-1$\\ 
         \\
         $-1$ & $(q-1)[k][n-k]$\\ 
         \\
         $-q-1$ & $q^2[k-1][n-k-1]$\\
    \end{tabular}
\end{center}
\end{remark}

\section*{Acknowledgement}
The author is currently a graduate student at the University of Wisconsin-Madison. He would like to thank his advisor, Paul Terwilliger, for many valuable ideas and suggestions for this paper.

Ian Seong\\
Department of Mathematics\\
University of Wisconsin \\
480 Lincoln Drive \\
Madison, WI 53706-1388 USA \\
email: iseong@wisc.edu\\
\end{document}